%
%
%

\documentclass[11pt]{article}%

\usepackage[margin=2cm]{geometry}
\usepackage[utf8]{inputenc}
\usepackage[T1]{fontenc}
\usepackage{lmodern}
\usepackage[french,english]{babel}
\usepackage{amsthm, amssymb, amsmath, amsfonts, mathrsfs, esint, textcomp, bm, xcolor}
\usepackage[colorlinks=true, pdfstartview=FitV, linkcolor=blue, citecolor=blue, urlcolor=blue,pagebackref=false]{hyperref}
\usepackage{mathtools}
\usepackage{tikz}
\usepackage{enumitem, imakeidx}
\usetikzlibrary{patterns}

\usepackage{microtype}

\definecolor{darkblue}{rgb}{0,0,0.7} 
\definecolor{darkred}{rgb}{0.9,0.1,0.1}
\definecolor{darkgreen}{rgb}{0,0.5,0}

\newtheorem{thm}{Theorem}[section]
\newtheorem{prop}[thm]{Proposition}
\newtheorem{lem}[thm]{Lemma}
\newtheorem{cor}[thm]{Corollary}
\theoremstyle{remark}
\newtheorem{rem}[thm]{Remark}
\theoremstyle{definition}

\renewcommand{\O}{\mathcal{O}}

\renewcommand{\leq}{\leqslant}
\renewcommand{\geq}{\geqslant}

\renewcommand{\subset}{\subseteq}

\newcommand{\N}{\mathbb{N}}

\newcommand{\R}{\mathbb{R}}
\newcommand{\C}{\mathbb{C}}
\newcommand{\Q}{\mathbb{Q}}
\newcommand{\Z}{\mathbb{Z}}

\newcommand{\eps}{\varepsilon}
\renewcommand{\d}{{\mathrm{d}}}

\newcommand{\T}{\mathbb{T}}

\newcommand{\vT}{\vec{T}}
\newcommand{\vN}{\vec{N}}

\newcommand{\iii}[1]{{\left\vert\kern-0.25ex\left\vert\kern-0.25ex\left\vert #1 
    \right\vert\kern-0.25ex\right\vert\kern-0.25ex\right\vert}}

\numberwithin{equation}{section}

\begin{document}

\begin{center}
{\Large Edge States for the magnetic Laplacian in domains with smooth boundary}\\
\bigskip
{ARIANNA GIUNTI, JUAN J.L. VEL\'AZQUEZ}
\end{center}

\bigskip

\textbf{Abstract:} We are interested in the spectral properties of the magnetic Schr\"odinger operator $H_\eps$ in
a domain $\Omega \subset \mathbb{R}^2$ with compact boundary and with magnetic field of intensity $\eps^{-2}$. We impose Dirichlet boundary conditions on $\partial\Omega$.
Our main focus is the existence and description of the so-called \textit{edge states}, namely 
eigenfunctions for $H_{\eps}$ whose mass is localized at scale $\eps$ along the boundary $\partial\Omega$.  When the intensity of the magnetic field is large (i.e. $\eps <<1$), we show 
that such edge states exist. Furthermore, we give a detailed description of their localization close to the boundary $\partial\Omega$, as well as how their mass is 
distributed along it.
From this result, we also infer asymptotic formulas for the eigenvalues of $H_\eps$.

\tableofcontents
\section{Introduction}
In this paper we are concerned with the structure of the eigenfunctions for the Schr\"odinger operator in the presence of a magnetic field. More precisely, for a simply connected domain $\Omega \subset \R^2$ with $C^4$ boundary, we study the spectrum and the eigenfunctions of the Hamiltonian
\begin{align}\label{hamiltonian.intro}
H := -(\nabla + i a ) \cdot (\nabla + i a ) 
\end{align}
on $L^2(\Omega)$, with domain $\mathcal{D}(H) =H^2(\Omega) \cap H^1_0(\Omega)$. Here, the \textit{magnetic potential} is a vector field $a: \Omega \to \R^2$  that corresponds to the magnetic field $\nabla \times a = b \, e_3$, with $b=b(x) \in C^{0}(\Omega ; \R)$ and $e_3\in \R^3$ being the canonical versor in the perpendicular direction to the plane containing the domain $\Omega$. 
\bigskip

The focus of our study are the localization properties of the eigenfunctions when the intensity of the magnetic field $b e_3$ is large. Specifically, given the Hamiltonian
$$
H_\eps := -(\nabla + i \eps^{-2} a)\cdot (\nabla + i \eps^{-2}a),
$$
we study the eigenvalue problem
\begin{align}\label{spectral.intro}
\begin{cases}
H_\eps \Psi  = \lambda \Psi \ \ \ &\text{in $\Omega$}\\
\Psi=0 \ \ \ \ &\text{on $\partial\Omega$}
\end{cases}, \ \ \ \lambda \in \R
\end{align} 
with a fixed magnetic potential $a \in C^{{1}}(\Omega ; \R^2)$ and when $\eps << 1$. For a large class of magnetic potentials $a$, many eigenvalues in \eqref{spectral.intro}
correspond to the so-called \textit{edge states}, namely the associated eigenfunctions have most of their $L^2$-norm concentrated along the boundary  $\partial\Omega$ at 
a distance of order $\eps$.

\bigskip

For the sake of simplicity, the main results of this paper are given in the case of a constant magnetic field $b= b_0 \in \R$. We stress, though, that our techniques
may easily be adapted also to non-constant magnetic fields  that do not oscillate too much. For more details about this ``smallness'' condition, we refer to Subsection
\ref{sub.generalizations}. 

\bigskip

In the absence of a boundary in \eqref{spectral.intro} (i.e. when $\Omega = \R^2$), it is well known that the spectrum of $H_\eps$ is pure point, has countably many gaps
of size $\eps^{-2}$ and may be written as $\sigma( H_\eps) = \eps^{-2}\sigma_{\text{Landau}}$, with $\sigma_{\text{Landau}}$ being the so called \text{Landau levels}
\begin{align}\label{landau.intro}
\sigma_{\text{Landau}} : = \biggl\{ b_0 (2 n +1) \, \colon \,  \, n \in \N  \biggr\}.
\end{align}
Here, the notation $\N$ stands for the natural numbers including $0$.
It is well-known that for each fixed value $\eps^{-2}b_0(2n + 1 ) \in \sigma(H_\eps)$, one may construct a countable family of eigenfunctions $\{\Psi_m\}_{m\in \N}$
having (finite) $L^2$-norm that concentrates at infinity when $m \to +\infty$. Our main objective is therefore to understand from an analytical point of view how the 
presence of a boundary in \eqref{spectral.intro} gives rise to eigenvalues in the gaps of $\eps^{-2}\sigma_{\text{Landau}}$ that correspond to edge states.

\bigskip

The main result  (see Theorems \ref{t.main.easy}-\ref{t.main}) that we prove in this paper may be outlined in the following way: If
$\{\lambda_\eps \}_{\eps > 0}$ is any family of eigenvalues for $H_\eps$ in \eqref{spectral.intro} such that 
$\eps^2\lambda_\eps \to \lambda$ and $\lambda \notin \sigma_{\text{Landau}}$, then the corresponding family of eigenfunctions 
$\{ \Psi_\eps \}_{\eps >0}$ is made of edge states. More precisely, we show that if $\Omega_\eps := \{ x \in \Omega \, \colon \, \mathop{dist}(x, \partial\Omega) < L\eps \}$,
with $L$ sufficiently large but independent from $\eps$, then
\begin{align}\label{localized.intro}
\| \Psi_\eps \|_{L^2(\Omega_\eps)} \simeq \| \Psi_\eps\| = 1,
\end{align}
and the probability density $|\Psi_\eps|^2$ is roughly homogeneously distributed along $\partial\Omega$ (see \eqref{cor.uniform.norm}). In addition, we provide a rigorous description of the asymptotic behaviour of $\Psi_\eps$ along the boundary and we derive an ODE describing the asymptotic variation of $|\Psi_\eps|^2$ along $\partial \Omega$ (see \eqref{ODE.amplitude}). Moreover, $\lambda > \min(\sigma_{\text{Landau}}) = b_0$. Reciprocally, we stress that it may be seen by classical asymptotic techniques and perturbation theory for self-adjoint operators that for every  $\lambda > b_0$ such that $\lambda \notin \sigma_{\text{Landau}}$, there exists a sequence of eigenvalues $\{ \lambda_\eps \}_{\eps>0}$ such that $\eps^2\lambda_\eps \to \lambda$.   {We also emphasize that our result \textit{is not} restricted to eigenvalues $\{\lambda_{\eps} \}_{\eps > 0}$ that cluster around the ground state. The limit value $\lambda$ for the rescaled family $\{\eps^2 \lambda_\eps\}_{\eps >0}$, indeed, may be any real number between two fixed Landau levels of $\sigma_{\text{Landau}}$.}

\smallskip

{ In addition to the previous result, in Corollary \ref{c.spectrum} we show that the eigenvalues $\lambda_\eps$ have form
\begin{align}\label{asympt.formula}
\lambda_\eps = \eps^{-2}\nu(k) + \eps^{-1}B(k)+ o(\eps^{-1}),  
\end{align}
where the functions $\nu$ and $B$ are evaluated on the wave number $k$ and do not depend on the geometry of the boundary $\partial\Omega$. On the other hand,
the corresponding eigenfunctions, approximated with the same level of accuracy, \textit{do} depend on the curvature of the domain $\partial\Omega$. We stress that the above formula is not restricted to the ground state.}

\smallskip

The function $e^{-i\lambda t}\Psi_\eps$, with $\Psi_\eps$ solving \eqref{spectral.intro}, corresponds to a solution for the Schr\"odinger equation $i\partial_t \rho= H_\eps\rho$ in $\Omega$ with Dirichlet boundary condition; therefore, the localization result for $\Psi_\eps$ translates into a result on the Fermi levels $\lambda$'s for which there are only currents that are concentrated along the boundary of the domain. Moreover, as it will become apparent in the statement of Theorem \ref{t.main}, the eigenvalue $\lambda$ uniquely determines the sign of the wave number $k$ (see also \eqref{asympt.formula}). This corresponds to a \textit{chiral} property for the Schr\"odinger operator $(i\partial_t - H_\eps)$, namely that the corresponding waves propagate in a preferred direction along the boundary. 

\bigskip

Operators with the form \eqref{hamiltonian.intro} have been extensively studied in both the physical and mathematical 
literature \cite{AvronSimon, Landau}. When $\Omega$ is sufficiently smooth (not necessarily bounded), the operators $H_\eps$ are self-adjoint
\cite{Helffer_book, Pushnitski_Rozenblum}. If $\Omega$ is bounded, the spectrum is discrete \cite{Helffer_book}; if $\Omega$ is an exterior domain then it has both discrete 
and essential spectrum. In this case, the essential spectrum coincides with the rescaled Landau levels set $\eps^{-2}\sigma_{\text{Landau}}$ \cite{Pushnitski_Rozenblum}.

\bigskip

The localization of the eigenfunctions $\Psi_\eps$ with $\eps^2\lambda_\eps$ sufficiently separated from the Landau levels set \eqref{landau.intro} might be expected 
from physical grounds and it has been proven for specific geometries in which the spectrum of $H_\eps$ may be explicitly computed; we refer to \cite{Halperin, Laughlin} for
the case of circular geometries and to \cite{Iwatsuka} for the case of $\Omega$ being the half-plane. This phenomenon is closely related to the \textit{Quantum Hall Effect},
namely that the transport of electrical current in the presence of a magnetic field takes place only along the boundary  when the Fermi energies
(in our setting the value $\lambda$) are different from the Landau levels (see, for instance, \cite{Prange_QHE}[Ch.1]). 
 
\bigskip
 
A result related to the one of this paper has been obtained in \cite{GrafFroelich}. In that paper, the authors study the Hamiltonian $H + V$ 
with $H$ as in \eqref{hamiltonian.intro} and $V$ bounded and satisfying a smallness condition depending on the magnetic field 
$|b|$.  They define the operator in an unbounded domain $\Omega \subset \R^2$, whose boundary is as well an unbounded curve. They consider
Dirichlet boundary conditions or other types of confining potentials. In this case, since the edge states are localized along an unbounded boundary,  they are expected
to correspond to the absolutely continuous part of the spectrum of $H + V$. By using the so-called Mourre  commutator estimates, they prove indeed that the spectrum between
the Landau levels is absolutely continuous if the domain $\Omega$ satisfies a suitable geometric condition (c.f. \cite{GrafFroelich}[assumption (GA)]) which excludes domains
having constant or shrinking thickness sufficiently far away from the origin.

\bigskip

{For what concerns the low-lying eigenvalues for $H_\eps$, there is a large literature focussed on studying their behaviour in the semiclassical limit $\eps \downarrow 0$. More generally, many works study the small eigenvalues for the Pauli-Dirichlet operator $P$ which yields the Hamiltonian for particles with spin in a magnetic field (c.f.  \cite{barbaroux.ecc, Hellfer_Sundqvist}). In particular, the estimates obtained in these papers for general magnetic fields of intensity $b=b(x)$, imply precise asymptotic bounds for the low-energy eigenvalues of $H_\eps$ when $b$ is constant. }

\bigskip

We also mention that the spectral properties of the Hamiltonian \eqref{hamiltonian.intro} with Neumann boundary conditions has
been extensively studied in the context of superconductivity problems (see, e.g. \cite{Helffer_book}). We mention that a remarkable 
feature that takes place in the case of Neumann boundary conditions is that the presence of a non-empty boundary 
$\partial\Omega$ can yield that the values of the ground state are smaller than the minimum of the Landau levels 
\eqref{landau.intro}. { On the other hand, in \cite{Frank_Dirichlet} and \cite{Frank_Neumann} the authors focus on the high-frequency eigenvalues for the operator $H_{\eps}$ above \eqref{spectral.intro}
with Dirichlet and Neumann boundary conditions, respectively. In these works, the main results are trace estimates for operators with the form $f(\eps^2 H_\eps)$, where 
$f$ is a Schwarz function. These give, in particular, a Weyl-like theorem yielding the number of eigenvalues of $H_\eps$ of order $\eps^{-2}$. 
The formula obtained for the counting function exhibits a main contribution due to the bulk part (namely the eigenvalues clustering around the Landau levels) and a
lower order correction due to the edge states. The approach developed in \cite{Frank_Dirichlet,Frank_Neumann} is based on detailed estimates on the contribution to the spectrum of the operator $f(\eps^2 H_\eps)$
of both the bulk and boundary parts. In the current paper, we only focus on the behaviour of the eigenvalues, and in particular of the corresponding eigenfunctions, that
are away from the Landau level and thus correspond to the part of the spectrum related to the presence of the boundary. Only for these eigenvalues, we obtain more accurate
estimates on their behaviour and on the propagation of the corresponding edge state along the boundary $\partial\Omega$.
  }

\bigskip

A class of Schr\"odinger operators in which the existence of (generalized) eigenfunctions localized along an interface has been investigated in a series of papers by
Fefferman, Lee-Thorp and Weinstein (see e.g. \cite{FeffWein12, FeffWein14, FeffWein18, FeffermanWeinstein_graphene}). In these works the authors consider operators of the form $-\Delta + V_\delta$ on $L^2(\R^2)$, with $V_\delta$ being asymptotically periodic at infinity, with the periodicity of an hexagonal lattice. This particular periodicity is motivated by the structure of
graphene. 

\bigskip

We also mention that the edge states studied in this papers may be interpreted in physical terms using the theory of topological insulators
\cite{BernevigHughes, Thouless_etall}. These ones are a class of materials in which the transport of waves cannot take place in the bulk of the material, but only along its
boundary or edges. Many aspects of this propagation along these specific parts of the material are very robust under perturbation of the Hamiltonian $H$ in question, and may
be explained by means of the topological properties of the spectral Floquet-Bloch bands for the spectrum $\sigma(H)$. 

\bigskip

For what concerns the techniques used in this paper, there are several analogies with the ones coming from the theory of homogenization for operators having oscillating coefficients. This is in particular true for what concerns the study of the distribution of the probability density $|\Psi_\eps|^2$ along the boundary $\partial\Omega$. For a more detailed explanation, we refer to Subsection \ref{subsection.main.ideas}. It is also interesting to remark that in the context of homogenization it was proved in \cite{AllaireConca_BlochWaves} that for a large class of second-order operators with highly oscillating periodic coefficients the eigenvalues can either be described in terms of Bloch eigenfunctions or are localized along the boundary.

\bigskip

We finally stress that the results of this papers may be extended with minor changes also to the case of a bounded scalar potential $V$ with
$\| V \|_{L^\infty} \leq C \eps^{-1}$ and, as already mentioned above, to non-constant magnetic fields that do not oscillate too much. Similarly, we may extend all the results
also to  any unbounded domain having compact and $C^4$ boundary. For further details on these generalizations, we refer to  Subsection \ref{sub.generalizations}.
We also mention that in the case of  a general magnetic field $b$ we expect to have eigenfunctions that are partially localized along the boundary and partially on the bulk.
We plan to address this issue in the near future.
\section{Setting and main results}

\subsection{The magnetic Laplacian in the half-plane}\label{s.half.plane.intro}
We begin with a first review of the properties of the spectrum for \eqref{spectral.intro} in the case of the half-plane and with $\eps=1$.  
This case will play a main role in the proof of our results for a general domain $\Omega$ with regular boundary. Indeed, since in this case the edge states are expected to be
localized at distance $\eps$ from $\partial\Omega$, provided that $\eps$ is small enough, the spectral problem 
\eqref{spectral.intro} at scale $\eps$ will resemble the one for the half-plane with constant and unitary magnetic field.

\bigskip

By using the notation $(x,y) \in \R^2$, we denote by $H_0$ the Hamiltonian 
\begin{align}\label{flat.hamiltonian}
H_0:= -\partial^2_x - (\partial_y + i x)^2.
\end{align}
This operator corresponds to the case of the magnetic field $e_3$ and with magnetic potential chosen as $A= -x e_2$. We thus define the eigenvalue problem 
\begin{equation}\label{eigenvalue.flat}
\begin{cases}
H_0 \Psi = \lambda \Psi \ \ \ \ &\text{ in $\{ x> 0\} \times \R$}\\
\Psi_\eps= 0 \ \ \ \  &\text{on $\{x = 0\} \times \R$.}
\end{cases}.
\end{equation}
Solutions to the previous problem, that are solely in $L^2_{loc}(\{ x> 0\} \times \R; \C ) \cap C^\infty(\{ x> 0\} \times \R; \C)$, are given by the (generalized) eigenfunctions
\begin{align}\label{generalized.eigenfunction}
\Psi_{k,n}(x,y) = e^{i k y} H_n(k, x), \ \ \ k \in \R, \,  n \in \N,
\end{align}
with $H_n(k, \cdot) \in H^2(\R_+ ; \R ) \cap C^\infty(\R_+ ; \R)$ solving
\begin{equation}\label{eigenvalue.oscillator}
\begin{cases}
( -\partial^2_x + (x + k)^2) H_n(k, \cdot) = \lambda H_n(k, \cdot) \ \ \ \ &\text{ in $\{ x> 0\}$}\\
H_n(k, 0)= 0
\end{cases}
\end{equation}
and normalized such that $\int_0^{+\infty}|H_n(k , x)|^2 \d x = 1$.

\smallskip

For each $k \in \R$ fixed, it is well-known that the  harmonic oscillator
\begin{align}\label{harmonic.oscillator}
\O(k):= ( -\partial^2_x + (x + k)^2)
\end{align}
on $\{ x > 0 \}$, with Dirichlet boundary conditions, has compact resolvent and therefore a discrete spectrum $\sigma( \O(k) ) := \{ \nu_n(k) \}_{n\in \N} \subset \R_+$
made of simple eigenvalues \cite{ReedSimon}. The functions $\{ H_n(k, \cdot) \}_{n \in \N}$ in \eqref{generalized.eigenfunction} are therefore the corresponding
eigenfunctions.

\smallskip

For every $n \in \N$, we may thus define the function 
\begin{align}\label{definition.branches}
\nu_n : \R \to \R_+, \ \ \ k \mapsto \nu_n(k), \ \ \text{$\nu_n(k)$ is the $n$-th eigenvalue of $\O(k)$.} 
\end{align}
This, together with \eqref{generalized.eigenfunction}, yields that the spectrum of $H_0$ may be written as 
\begin{align}\label{spectrum.H_0}
\sigma(H_0)= \overline{\{ \nu_n( k) \, \colon \, k \in \R, n \in \N \}}.  
\end{align}

More rigorously, we have the following: 

\begin{lem}\label{l.basic.flat} Let $H_0$ and $\O(\cdot)$ be as above. Then 
\begin{itemize}
\item For every $n\in \N$, the ``branch'' $\nu_n= \nu_n(k)$ defined in \eqref{definition.branches} is analytic in $\R$ and satisfies
\begin{align*}
\nu_n \nearrow, \ \ \ \ \ \ \lim_{k\to -\infty} \nu_n(k) =2 n + 1,\ \ \ \ \ \ \lim_{k \to +\infty} \nu_n(k) = +\infty.
\end{align*}
Furthermore, we have
\begin{align}\label{formula.derivative.nu}
\nu'_n(k) =2 \int_{\R_+} (x+ k) |H_n(k, x)|^2 \, \d x, 
\end{align}
where $H_n(k, \cdot)$ is the corresponding eigenfunction;

\item The spectrum of $H_0$ satisfies \eqref{spectrum.H_0} and coincides with the interval $[1 ; +\infty )$. Furthermore, it may be split into an
absolutely continuous part and an essential part. The essential part corresponds to the Landau levels $\sigma_{\text{Landau}}$ of  \eqref{landau.intro} (with $b_0=1$); 

\item For every $k\in \R$ and $n \in \N$, the eigenfunction $H_n(k, \cdot)$ corresponding to the eigenvalue $\nu_n(k)$ in \eqref{eigenvalue.oscillator} satisfies for every $m\in \N$
\begin{align*}
\sup_{R > 0} R^{2m}\int_{|x + k| > R}|H_n(k, x)|^2 \leq C(k, n, m).
\end{align*}
\end{itemize}
\end{lem}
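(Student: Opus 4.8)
The plan is to analyze the one-dimensional family of harmonic oscillators $\O(k)$ on the half-line and transfer the conclusions to $H_0$ via the partial Fourier transform in $y$. I would organize the argument into three blocks, matching the three bullets.

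\emph{Analyticity and monotonicity of the branches.} Since $\O(k) = -\partial_x^2 + (x+k)^2$ on $\R_+$ with Dirichlet condition at $0$ is, after the shift $x\mapsto x+k$, a fixed analytic family of type (A) in Kato's sense (a holomorphic family of self-adjoint operators with compact resolvent and simple eigenvalues), analytic perturbation theory gives that each eigenvalue $\nu_n(k)$ and (a choice of) normalized eigenfunction $H_n(k,\cdot)$ depend analytically on $k$. Differentiating the eigenvalue equation $\O(k)H_n = \nu_n(k)H_n$ in $k$, pairing with $H_n$, and using self-adjointness together with the normalization $\int_0^\infty |H_n(k,x)|^2\,\d x = 1$ yields the Feynman--Hellmann identity $\nu_n'(k) = 2\int_{\R_+}(x+k)|H_n(k,x)|^2\,\d x$. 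Monotonicity $\nu_n\nearrow$ then follows from a comparison argument: for $k_1 < k_2$ one compares the potentials $(x+k_1)^2$ and $(x+k_2)^2$, which are \emph{not} pointwise ordered on $\R_+$, so instead I would translate the domain --- the operator $\O(k)$ is unitarily equivalent to $-\partial_x^2 + (x+k)^2$ on $(0,\infty)$, i.e. to $-\partial_x^2 + x^2$ on $(k,\infty)$ with Dirichlet condition at $x=k$; enlarging $k$ shrinks the domain, so by the min-max principle with Dirichlet bracketing each eigenvalue is nondecreasing, and strictness follows from simplicity and unique continuation. The two limits: as $k\to+\infty$, Dirichlet bracketing against $(0,\infty)$ minus a large interval forces $\nu_n(k)\to+\infty$; as $k\to-\infty$, the Dirichlet endpoint $x=k$ recedes to $-\infty$, the obstruction disappears, and $\O(k)$ converges in strong resolvent sense to the full-line shifted harmonic oscillator whose $n$-th eigenvalue is $2n+1$, so $\nu_n(k)\downarrow 2n+1$.

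\emph{Identification of $\sigma(H_0)$ and its decomposition.} Using the unitary partial Fourier transform in $y$, $H_0$ is the direct integral $\int_\R^{\oplus}\O(k)\,\d k$, hence $\sigma(H_0) = \overline{\bigcup_{n\in\N}\nu_n(\R)}$, which is \eqref{spectrum.H_0}. By the first bullet each $\nu_n$ is a continuous strictly increasing function from $\R$ onto $(2n+1,+\infty)$, so $\nu_n(\R) = (2n+1,+\infty)$ and the union over $n$ together with its closure is exactly $[1,+\infty)$. For the decomposition into absolutely continuous and pure-point parts, the standard direct-integral criterion applies: where the band functions $\nu_n$ are real-analytic and nonconstant (which holds everywhere by monotonicity and $\nu_n'>0$), the spectrum is purely absolutely continuous, while the Landau levels $\{2n+1\}$ arise only as the limiting values at $k\to-\infty$, i.e. as accumulation points of the bands, and correspond to the essential (non-isolated) part; one checks $\nu_n(k)$ never actually attains $2n+1$, so these thresholds form the essential spectrum exactly as claimed. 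I expect this is mostly bookkeeping once the band monotonicity is in hand.

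\emph{Decay of eigenfunctions.} For fixed $k,n$, $H_n(k,\cdot)$ solves $-\partial_x^2 H_n + ((x+k)^2 - \nu_n(k))H_n = 0$; for $|x+k|$ large the potential coefficient is large and positive, so this is a classic Agmon-type decay estimate. I would fix $R_0$ with $(x+k)^2 - \nu_n(k)\geq 1$ for $|x+k|\geq R_0$ and run a Gronwall/Agmon weighted-energy argument with weight $e^{\phi}$, $\phi = \phi(x)$ a bounded modification of $m|x+k|$ --- more simply, since one only needs polynomial weights, multiply the equation by $\langle x+k\rangle^{2m}\overline{H_n}$, integrate over $\{|x+k|>R\}$, integrate by parts, and absorb the error terms using that the potential dominates; this yields $\int_{|x+k|>R}\langle x+k\rangle^{2m}|H_n|^2 \leq C(k,n,m)$ uniformly in $R>0$, which gives the stated bound with $R^{2m}$ in front by monotonicity of the tail.

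The main obstacle is the \emph{strict} monotonicity and the correct handling of the limits $k\to\pm\infty$: one must be careful that the two potentials $(x+k_j)^2$ are not pointwise comparable on $\R_+$, so the clean argument goes through domain translation and Dirichlet bracketing rather than naive potential comparison, and the $k\to-\infty$ limit requires a strong-resolvent-convergence argument (or an explicit variational computation) to pin the limiting value to $2n+1$ rather than merely $\leq 2n+1$.
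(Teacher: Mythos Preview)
Your proposal is correct and is essentially a fleshed-out version of what the paper does: the paper does not give a detailed proof of this lemma but simply remarks that it is standard, citing Kato and Reed--Simon, and points to min-max/comparison principles for the monotonicity and limits of $\nu_n$ and to differentiating the eigenvalue equation and testing with $H_n(k,\cdot)$ for the derivative identity~\eqref{formula.derivative.nu}. Your domain-translation argument (rewriting $\O(k)$ as $-\partial_x^2+x^2$ on $(k,\infty)$ and using Dirichlet bracketing) is exactly the right way to make the monotonicity rigorous, since as you note the potentials $(x+k_j)^2$ are not pointwise ordered on $\R_+$ and the Feynman--Hellmann formula alone does not give $\nu_n'>0$ for $k<0$; this is the one place where your write-up is more careful than the paper's sketch.
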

The estimates of this lemma are standard and follow easily by classical results for harmonic oscillators \cite{Kato_perturbations, ReedSimon}. 
We remark that the properties of the functions $\nu_l$ may be inferred by standard comparison principles combined with Min-Max theorems for semi-bounded operators 
\cite{ReedSimon}[Theorem XIII.1]. Identity \eqref{formula.derivative.nu} may be easily proven by differentiating in $k$ the equation for each $H_n(k , \cdot)$ and testing 
with $H_n(k , \cdot)$ itself.

\begin{rem}\label{landau.in.zero}
We remark that the solutions of \eqref{eigenvalue.oscillator} for $k=0$ are all the Hermite functions, namely the eigenfunctions for $\O(0)$ in $\R$, that vanish at the origin. These correspond to all the Hermite functions having even quantum numbers $n \in \N$. By recalling that the spectrum of $\O(0)$ in $\R$ is given by the set $\sigma_{\text{Landau}}$ of \eqref{landau.intro}, this yields that $\nu_n(0) = 2n +  1$ for every $n\in \N$.
\end{rem}
\subsection{Main results}\label{sub.main.results}

Let us consider $H_\eps$ as in \eqref{spectral.intro}, with magnetic potential $a \in C^1(\Omega; \R^2)$ such that $\nabla \times a = e_3$ in $\Omega$.
Let $\{\Psi_\eps \}_{\eps >0} \subset H^2(\Omega ; \C) \cap H^1_0(\Omega; \C)$, $\| \Psi_\eps \|_{L^2(\Omega)}=1$, be a family of eigenfunctions for $H_\eps$ in $\Omega$ with Dirichlet boundary conditions. We denote by $\{ \lambda_\eps \}_{\eps > 0} \subset \R$ the associated eigenvalues.  We assume that there exist $\lambda, \delta >0$ and $N \in \N \backslash \{ 0\}$ such that
 with 
\begin{align}\label{distance.landau}
\eps^2 \lambda_\eps \to \lambda, \ \ \ \  \mathop{dist}(\lambda ; \sigma_{\text{Landau}}) \geq \delta, \ \ \lambda \in (2N- 1;  2N + 1).
\end{align}
We thus denote by $k_1, \cdots k_N \in \R$ the (unique) values such that 
\begin{align}\label{roots}
 \nu_1(k_{1}) = \cdots = \nu_{N}(k_{N}) = \lambda.
\end{align}
We remark that the existence and uniqueness of the previous solutions follows from the properties of the spectrum of $H_0$ enumerated in Lemma \ref{l.basic.flat}.  Furthermore, since $\eps^2\lambda_\eps \to \lambda$ and the functions $\nu_l(\cdot)$, $l \in \N$, are analytic, there exists an
$\eps_0=\eps_0(\lambda) > 0$ such that for every $\eps < \eps_0$ there are exactly $k_{\eps,1}, \cdots, k_{\eps, N} \in \R$  with $k_{\eps,j} \to k_j$ for all
$l=1, \cdots, N$ and such that 
\begin{align}\label{roots.branches}
\nu_{1}(k_{1,\eps}) = \cdots = \nu_N( k_{N,\eps}) = \eps^2\lambda_\eps.
\end{align}

The next two theorems provide a detailed description of the behaviour of the eigenfunctions $\Psi_\eps$ close to the boundary. In particular, they claim that such 
eigenfunctions are proper edge states since they are not only localized at distance $\sim \eps$ from $\partial\Omega$, but are roughly homogeneously distributed along the
boundary.  In view of this, we also fix the following gauge in \eqref{spectral.intro}: We set 
the magnetic potential $a=a(x): \Omega \to \R^2$ to be the rotated gradient
\begin{align}\label{magnetic.potential}
a:= (\nabla \phi)^T, \ \ \ \ \ \ \ \ \begin{cases}
-\Delta\phi = 1 \ \ \ \ &\text{ in $\Omega$}\\
\phi = 0 \ \ \ &\text{ on $\partial\Omega$.}
\end{cases}
\end{align}.

Since we aim at describing the behaviour of $\Psi_\eps$ close to the boundary, it is also useful to introduce the following (local) curvilinear coordinates:
Since $\Omega$ is bounded, simply connected and with $C^4$ boundary, we may parametrize its boundary $\partial\Omega$ by arc-length and write it as
\begin{align}\label{parametrized.boundary}
\partial\Omega = \{ f(\xi) \in \R^2 \, \colon \, \xi \in \T_L \}, \ \ \ f : \T_L \to \R^2,
\end{align}
with $L$ being the length of the curve $\partial\Omega$. Without loss of generality, throughout the rest of the paper, we assume that $L=1$. For every $\xi \in \T$ we thus
denote by
\begin{align}
T(\xi) = (f_1'(\xi), f_2'(\xi)), \ \ \ N(\xi) := (T(\xi))^\perp,
\end{align}
the tangent and the (outer) normal vector at $f(\xi) \in \partial\Omega$, respectively. Since $\partial\Omega$ is $C^4$, there exists
$\kappa >0$ such that the tubular neighbourhood
\begin{align}\label{tubular.neighbourhood}
\Omega_{2\kappa}:= \{ x \in \Omega \, | \, \mathop{dist}(x, \partial\Omega) \in (0, 2\delta) \} \simeq \{ (\xi, s) \in \T \times (0, 2\delta) \},
\end{align}
where the new coordinates $(\xi, s)$ are defined by
\begin{equation}\label{local.coordinates}
 x(\xi, s) = f(\xi) - \vN(\xi) s.
\end{equation}
Throughout the paper, for any given two points $\xi, \xi^* \in \T$, we denote by $d(\xi, \xi^*)$ the distance on $\T$  between $\xi $ and $\xi^*$.

\bigskip

For $f$ as in \eqref{parametrized.boundary}, we denote by $\alpha: \T \to \R$ the normal derivative of $\phi$ along $\partial\Omega$ and $\kappa: \T \to \R$ the signed curvature
of $\partial\Omega$. In other words, for every $\xi \in \T$
\begin{align}\label{functions.curvature}
\alpha(\xi):= \partial_n \phi(f(\xi)), \ \ \ \vT  '(\xi)= \kappa(\xi)\vN(\xi).
\end{align}
For every $\eps>0$, $l \in \N$ and $k\in \R$, we also set
\begin{align}\label{quantities.corollary}
\omega_\eps := \frac{|\Omega|}{2\pi \eps^2} - \lfloor \frac{|\Omega|}{2\pi \eps^2}\rfloor,\ \ \ \ \ \ B_l(k)= -\nu_l'(k)^{-1} \int_0^{+\infty} \mu ( (\mu + k)^2 + k^2) |H_l(k,\mu)|^2 \d\mu.
\end{align}
We remark that as an easy consequence of assumption \eqref{distance.landau}, it follows that the eigenfunctions are localized in a neighbourhood of $\partial \Omega$ of size $\eps$, namely that 
\begin{prop}\label{t.localization}
Let $(\Psi_\eps, \lambda_\eps)$ be as above and let $d_{\partial\Omega}=d_{\partial\Omega}(x)$ be the distance function from the boundary $\partial\Omega$. Then for every $n \in \N$ there exists a constant $C=C(N, \delta, n)$ such that
\begin{align}\label{localization}
 \| d_{\partial\Omega}^n \Psi_\eps \|_{L^2(\Omega ; \C)} + \eps  \| d_{\partial\Omega}^n( \nabla + i \eps^{-2}a )\Psi_\eps \|_{L^2(\Omega; \C)}+  \eps^2  \| d_{\partial\Omega}^n (\nabla + i \eps^{-2}a )^2\Psi_\eps \|_{L^2(\Omega; \C)} \leq C \eps^n.
\end{align}
\end{prop}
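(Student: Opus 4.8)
The plan is to prove, for every $n\in\N$ and by a simultaneous induction on $n$, the three bounds
\[
\|d_{\partial\Omega}^{\,n}\Psi_\eps\|\lesssim\eps^n,\qquad \|d_{\partial\Omega}^{\,n}(\nabla+i\eps^{-2}a)\Psi_\eps\|\lesssim\eps^{n-1},\qquad \|d_{\partial\Omega}^{\,n}(\nabla+i\eps^{-2}a)^2\Psi_\eps\|\lesssim\eps^{n-2},
\]
which together are \eqref{localization}. It is convenient to first replace $d_{\partial\Omega}$ by a fixed function $\td d\in C^3(\ov\Omega)$ with $\td d=d_{\partial\Omega}$ on a one‑sided neighbourhood of $\partial\Omega$ and $\td d$ bounded from below and above away from $\partial\Omega$: since $d_{\partial\Omega}\le C_\Omega\,\td d$ on $\Omega$, it suffices to establish the bounds with the weight $\td d$. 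The third (second–derivative) bound is then essentially free once the first is known, because $(\nabla+i\eps^{-2}a)^2\Psi_\eps=-H_\eps\Psi_\eps=-\lambda_\eps\Psi_\eps$ and $\lambda_\eps\sim\eps^{-2}$ by \eqref{distance.landau}, so $\|\td d^{\,n}(\nabla+i\eps^{-2}a)^2\Psi_\eps\|=\lambda_\eps\|\td d^{\,n}\Psi_\eps\|$. Thus the real content lies in the zeroth– and first–order bounds, and the key tool is a resolvent estimate obtained by comparison with the model operator on the whole plane.

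\textbf{The resolvent estimate.} Let $w\in W^{2,\infty}(\Omega)$ be real with $w=0$ on $\partial\Omega$ (the relevant cases being $w=\td d^{\,m}$); an elementary trace computation shows $w\Psi_\eps\in H^2_0(\Omega)$, hence it extends by zero to $H^2(\R^2)$. Since $\Omega$ is simply connected and $\nabla\times a=1=\nabla\times A$ for a Landau–gauge potential $A$ of the constant field on $\R^2$, we write $a=A+\nabla g$ with $g\in C^3(\ov\Omega)$ (using the regularity of the gauge \eqref{magnetic.potential}); by gauge covariance, with $H_{\R^2}:=-(\nabla+i\eps^{-2}A)^2$ on $L^2(\R^2)$, the function $u:=e^{i\eps^{-2}g}w\Psi_\eps$ (extended by zero) lies in $\mathcal D(H_{\R^2})$ and satisfies $\|u\|=\|w\Psi_\eps\|$, $\|(H_{\R^2}-\lambda_\eps)u\|=\|(H_\eps-\lambda_\eps)(w\Psi_\eps)\|$. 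It is classical that $\sigma(H_{\R^2})=\eps^{-2}\sigma_{\text{Landau}}$, so from $\eps^2\lambda_\eps\to\lambda$ and $\dist(\lambda,\sigma_{\text{Landau}})\ge\delta$ we get $\dist(\lambda_\eps,\sigma(H_{\R^2}))\ge\frac{\delta}{2}\eps^{-2}$ for $\eps$ small, hence $\|(H_{\R^2}-\lambda_\eps)^{-1}\|\le 2\delta^{-1}\eps^2$. Writing $D:=\nabla+i\eps^{-2}a$ and using $H_\eps\Psi_\eps=\lambda_\eps\Psi_\eps$ one computes $(H_\eps-\lambda_\eps)(w\Psi_\eps)=[H_\eps,w]\Psi_\eps=-2\nabla w\cdot D\Psi_\eps-(\Delta w)\Psi_\eps$, so that
\[
\|w\Psi_\eps\|_{L^2(\Omega)}\ \le\ \frac{2\eps^2}{\delta}\,\bigl\|\,2\nabla w\cdot D\Psi_\eps+(\Delta w)\Psi_\eps\,\bigr\|_{L^2(\Omega)}.
\]
In parallel, testing $H_\eps\Psi_\eps=\lambda_\eps\Psi_\eps$ against $w^2\ov{\Psi_\eps}$, integrating by parts (Dirichlet conditions) and using Young's inequality gives the companion energy bound $\|wD\Psi_\eps\|^2\lesssim\lambda_\eps\|w\Psi_\eps\|^2+\|\,|\nabla w|\,\Psi_\eps\|^2$.

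\textbf{The induction.} For $n=0$ one has $\|\Psi_\eps\|=1$, $\|D\Psi_\eps\|^2=\langle H_\eps\Psi_\eps,\Psi_\eps\rangle=\lambda_\eps\lesssim\eps^{-2}$ and $\|D^2\Psi_\eps\|=\lambda_\eps\lesssim\eps^{-2}$. For $n=1$, the weight $w=\td d$ satisfies $|\nabla w|+|\Delta w|\lesssim1$, so the resolvent estimate gives $\|\td d\,\Psi_\eps\|\lesssim\eps^2(\|D\Psi_\eps\|+\|\Psi_\eps\|)\lesssim\eps$, and then the companion bound gives $\|\td d\,D\Psi_\eps\|^2\lesssim\lambda_\eps\eps^2+1\lesssim1$. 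For $n\ge2$, assuming all three bounds up to order $n-1$ and applying the resolvent estimate with $w=\td d^{\,n}$ (for which $|\nabla w|\lesssim n\,\td d^{\,n-1}$ and $|\Delta w|\lesssim n^2\,\td d^{\,n-2}$), the inductive first–derivative bound at order $n-1$ and zeroth bound at order $n-2$ yield
\[
\|\td d^{\,n}\Psi_\eps\|\ \lesssim\ \eps^2\bigl(n\,\|\td d^{\,n-1}D\Psi_\eps\|+n^2\,\|\td d^{\,n-2}\Psi_\eps\|\bigr)\ \lesssim\ \eps^2\cdot\eps^{\,n-2}\ =\ \eps^{\,n};
\]
the companion bound with $w=\td d^{\,n}$ then gives $\|\td d^{\,n}D\Psi_\eps\|^2\lesssim\lambda_\eps\eps^{2n}+\eps^{2n-2}\lesssim\eps^{2n-2}$, and the second–derivative bound follows from $D^2\Psi_\eps=-\lambda_\eps\Psi_\eps$. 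Finally one replaces $\td d$ by $d_{\partial\Omega}$, which closes the induction and proves \eqref{localization}.

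\textbf{Main difficulty.} The essential point is the resolvent estimate, i.e.\ the gain of a full power $\eps^2$, which comes from the fact that $\lambda_\eps$ sits in a \emph{gap} of $\sigma(H_{\R^2})$; the diamagnetic lower bound $\|Du\|^2\ge\eps^{-2}\|u\|^2$ alone is not enough, because here $\lambda>b_0=1$. It is therefore the spectral structure of the planar model — $\lambda_\eps$ lying strictly between two consecutive Landau levels — rather than merely the positivity of the bottom of the spectrum, that forces the localization. The subsidiary points (the membership $w\Psi_\eps\in H^2_0(\Omega)$, the global gauge reduction using the simple connectedness of $\Omega$, and the tracking of the $n$‑dependent constants) are routine.
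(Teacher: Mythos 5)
Your proposal is correct and follows essentially the same route as the paper's proof: replace the distance weight by a smoother comparable function, transfer via gauge covariance to the Landau model on $\R^2$, and exploit the spectral-gap resolvent estimate (a gain of $\eps^2$ per application, since $\lambda_\eps$ lies between two rescaled Landau levels) to close a simultaneous induction on $n$. The differences are cosmetic: the paper uses the Poisson solution $\phi$ of \eqref{magnetic.potential} as the weight (Hopf's lemma giving $\phi\sim d_{\partial\Omega}$ near the boundary) and rescales $\tilde x=\eps^{-1}x$ before invoking the resolvent, while you build an explicit modified distance $\td d$ and argue at the original scale.
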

{We prove this statement in Section \ref{s.localization} by means of an easy rescaling argument that is similar to the one of \cite[Proposition 5.6]{AllaireConca_BlochWaves}.  We do not claim that this result is optimal. We expect to obtain exponential localization by adapting existing techniques for eigenfunctions corresponding to low-lying eigenvalues \cite[{Theorem 8.2.4.}]{Helffer_book}. However, the lemma above serves our purposes as it allows us to assume that the eigenfunctions $\Psi_\eps$ are supported in the tubular neighbourhood defined in \eqref{tubular.neighbourhood}, up to an error of order $\eps^n$, for $n \in \N$ arbitrary. This motivates the slight abuse of notation of the next two theorems, where the eigenfunctions $\Psi_\eps$ are expressed in the curvilinear coordinates $(s, \xi)$ which are assumed to be in $\T \times \R_+$ (see Step 1 in the proof of Theorem \ref{t.main}, for the rigorous proof of this claim). }

\bigskip

Our first main result focusses on the case of the rescaled eigenvalues $\eps^2\lambda_\eps$ being between the first and second Landau level 
(i.e. with $N=1$ and any $\delta>0$ in \eqref{distance.landau}).
\begin{thm}\label{t.main.easy}
Let $\{\Psi_\eps \}_{\eps>0} \subset H^2(\Omega ; \C) \cap H^1_0(\Omega ; \C)$ solve \eqref{spectral.intro} with $a$ as in \eqref{magnetic.potential}. Let, $\{\lambda_\eps\}_{\eps>0}$ be the associated eigenvalues satisfying \eqref{distance.landau} for some $\delta > 0$ and with $N=1$. Then, for every sequence $\eps \downarrow 0$ we may extract a subsequence $ \eps_j \downarrow 0$ and $\rho \in [0, 2\pi)$ such that the function
\begin{align*}
\Psi_{\text{flat},\eps}&:= e^{i( \theta_\eps(\xi,s)- k_{1,\eps} \frac \xi \eps + \rho)}H_1(k_{1,\eps}, \frac s \eps),\\
\theta_{\eps}(\xi, s)&:= \eps^{-2}(-\int_0^\xi \alpha(y) \d y+ \frac{1}{2}\alpha'(\xi)s^2)  + B_1(k_1) \int_0^\xi \kappa(y) \d y  + {\omega_\eps} \xi  \in \R
\end{align*}
satisfies
\begin{align}\label{closeness.to.flat.theorem.easy}
\lim_{j \uparrow +\infty}\sup_{\xi^* \in \T}\bigl(\fint_{d(\xi; \xi^*)< \eps_j} \int_0^{+\infty} | \Psi_{\eps_j}(\xi, s) - \frac{1}{\sqrt{\eps_j}}\Psi_{\text{flat}, \eps_j}(\xi, s)|^2 \d\xi \, \d s \bigr)^{\frac 1 2} = 0.
\end{align}
\end{thm}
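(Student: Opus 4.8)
The plan is to reduce the problem, via the localization estimate of Proposition \ref{t.localization}, to a problem on the (rescaled) tubular neighbourhood and then to a problem on the half-plane, where Lemma \ref{l.basic.flat} gives complete information. First I would rewrite the eigenvalue equation $H_\eps \Psi_\eps = \lambda_\eps \Psi_\eps$ in the curvilinear coordinates $(\xi,s)$ of \eqref{local.coordinates}, using the fact that by Proposition \ref{t.localization} the mass of $\Psi_\eps$ outside $\Omega_{2\kappa}$ is $O(\eps^n)$ for every $n$, so that $\Psi_\eps$ may be treated as supported there. In these coordinates the magnetic Laplacian acquires the metric factor $h(\xi,s) = 1 - \kappa(\xi)s$, and the gauge choice \eqref{magnetic.potential} lets me read off the magnetic potential in terms of $\phi$; the key computation is that near $\partial\Omega$ one has $\phi(\xi,s) = \alpha(\xi)s - \tfrac12(\alpha'(\xi) + \kappa(\xi)\alpha(\xi)\cdots)s^2 + O(s^3)$ (I would expand $\phi$ to second order in $s$), which is exactly what produces the phase $\theta_\eps$ in the statement. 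Then I rescale $s = \eps\mu$, $\xi = \eps\sigma$ locally, so that to leading order the operator becomes $\eps^{-2}(-\partial_\mu^2 + (\mu+k)^2)$, i.e. the harmonic oscillator $\O(k)$ of \eqref{harmonic.oscillator}, with lower-order ($\eps^{-1}$) corrections coming from curvature and from the quadratic term in the expansion of $\phi$.

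Next, the oscillatory-phase ansatz: I would substitute $\Psi_\eps = e^{i\theta_\eps(\xi,s)} e^{-ik_{1,\eps}\xi/\eps} v_\eps(\xi,s)$ (absorbing the rapidly oscillating phase $\theta_\eps$, which is dictated by the gauge and the requirement of matching the flat-model phase $e^{iky}$), and derive the equation satisfied by $v_\eps$. The point of this change of unknown is that the leading-order equation for $v_\eps$, in the rescaled variable $\mu = s/\eps$, is precisely $\O(k_{1,\eps})v_\eps = \eps^2\lambda_\eps v_\eps$ with the Dirichlet condition $v_\eps(\cdot,0)=0$; since $\eps^2\lambda_\eps = \nu_1(k_{1,\eps})$ by \eqref{roots.branches} and since the spectrum of $\O(k_{1,\eps})$ is simple with $\nu_1$ the first eigenvalue separated from the rest (here $N=1$ and $\mathrm{dist}(\lambda;\sigma_{\text{Landau}})\ge\delta$ guarantee a spectral gap uniform in $\eps$), a spectral-projection/Feshbach argument forces $v_\eps(\xi,\cdot)$ to be, to leading order and locally in $\xi$, a scalar multiple $A_\eps(\xi)H_1(k_{1,\eps},\mu)$ of the ground state of $\O(k_{1,\eps})$. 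The amplitude $A_\eps(\xi)$ is then governed, at the next order, by a transport (ODE) equation along $\partial\Omega$ obtained by projecting the $\eps^{-1}$-order terms onto $H_1$; this is where $B_1(k_1)$ and the curvature integral $\int_0^\xi\kappa$ enter, and where the Fredholm solvability condition at order $\eps^{-1}$ pins down the phase and shows $|A_\eps|$ is (asymptotically) constant — which is why one only gets $|\Psi_{\text{flat},\eps}|=|H_1|$ with a single undetermined constant phase $\rho$.

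For the quantization of $\rho$ and of $\omega_\eps$ I would use that $\xi$ lives on $\T = \T_1$: the full phase $\theta_\eps(\xi,s) - k_{1,\eps}\xi/\eps + \rho$ must be single-valued (well-defined on the circle) modulo $2\pi$, and the magnetic flux $\int_\Omega \eps^{-2}b_0 = \eps^{-2}|\Omega|$ produces exactly the fractional-part term $\omega_\eps\xi$ in \eqref{quantities.corollary}; closing up the loop gives a compatibility condition that determines $\rho$ up to the subsequence (hence the extraction of $\eps_j$ and the choice of $\rho\in[0,2\pi)$, whose non-uniqueness reflects the one-dimensional eigenspace and the arbitrary overall phase of $\Psi_\eps$). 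Finally, to get \eqref{closeness.to.flat.theorem.easy} I would run a quantitative version of the above: estimate $\|\Psi_\eps - \eps^{-1/2}\Psi_{\text{flat},\eps}\|$ on each window $\{d(\xi;\xi^*)<\eps\}$, uniformly in $\xi^*\in\T$, by combining (i) the uniform spectral gap of $\O(k_{1,\eps})$, (ii) the localization bounds \eqref{localization} including those on $(\nabla+i\eps^{-2}a)\Psi_\eps$ and its second derivative (needed to control the error terms involving derivatives of $v_\eps$ and the $O(s^3)$ and $O(\eps)$ remainders in the coordinate change), and (iii) the transport equation for the amplitude. The normalization $\eps^{-1/2}$ and the averaged ($\fint$ over an $\eps$-window) norm are the natural local $L^2$ scale, and the sup over $\xi^*$ is handled because all estimates are uniform in the base point once the curvature and $\alpha$ are bounded in $C^2$ (guaranteed by $\partial\Omega\in C^4$).

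\textbf{Main obstacle.} I expect the hardest part to be the matching of the phase: carefully expanding $\phi$ (equivalently the magnetic potential $a=(\nabla\phi)^T$) to second order in $s$ in the curvilinear frame, tracking how the metric factor $1-\kappa s$ interacts with it, and verifying that the resulting $\eps^{-2}$- and $\eps^{-1}$-order phases are exactly $\eps^{-2}(-\int_0^\xi\alpha + \tfrac12\alpha'(\xi)s^2)$ and $B_1(k_1)\int_0^\xi\kappa + \omega_\eps\xi$ — and then showing the leftover terms are genuinely $o(1)$ after the local averaging. Closely tied to this is proving that the Fredholm/solvability condition at order $\eps^{-1}$ is met (so that the formal construction is not obstructed) and that it forces $|A_\eps|$ to be asymptotically constant along $\partial\Omega$; this is the step most analogous to the homogenization arguments mentioned in the introduction and where the identity \eqref{formula.derivative.nu} for $\nu_1'(k)$ is used to make the transport coefficient explicit.
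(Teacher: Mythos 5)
Your setup matches the paper: localization via Proposition \ref{t.localization}, curvilinear coordinates, the gauge change $e^{i\eps^{-2}\rho_\eps}$ encoded in Lemma \ref{l.local.magnetic.potential}, rescaling to the harmonic oscillator $\O(k)$, and reading off the transport coefficient from \eqref{formula.derivative.nu}. But there is a genuine gap at the heart of the argument. Your plan is a WKB-type ansatz $\Psi_\eps = e^{i\theta_\eps}e^{-ik_{1,\eps}\xi/\eps}v_\eps$ followed by a spectral-projection/Feshbach reduction and a solvability (Fredholm) condition at order $\eps^{-1}$. That scheme is the right one for \emph{constructing} an approximate eigenfunction (and indeed the paper uses exactly this for the easy half of Corollary \ref{c.spectrum}), but it does not by itself prove that the \emph{actual} $\Psi_\eps$ has this form. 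Localization alone (Proposition \ref{t.localization}) does not exclude $\Psi_\eps$ from concentrating on a short arc of $\partial\Omega$; the solvability condition is a constraint on a would-be amplitude, not a proof that the true eigenfunction admits a slowly varying amplitude at all. In short, nothing in the proposal ties the blow-up at one boundary point $\xi^*$ to the blow-up at a far-away point $\bar\xi$, which is precisely the statement one needs in order to assert $|A_\eps|$ is asymptotically constant and obtain the $\sup_{\xi^*}$ in \eqref{closeness.to.flat.theorem.easy}.

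The paper's proof is built around closing this gap. It runs a compactness argument: rescale by $m_\eps$ (the maximal local $L^2$-mass, \eqref{def.m.eps}), pass to a weak limit $\Psi_0$, and identify $\Psi_0$ via a Liouville-type rigidity (Lemma \ref{l.identification}). The decisive tool, which has no analogue in your outline, is Proposition \ref{proposition.growth}: if $(H_0 - \lambda)\Psi = g$ on $\R\times\R_+$ with Dirichlet data and $g$ grows like $|\theta|^m$, then $\Psi$ grows at most like $|\theta|^{m+1}$. This is proved via Fourier analysis in $\theta$ and resolvent estimates that handle the singular frequency $k=k_1$ (where $\nu_1(k)-\lambda$ vanishes); see Lemma \ref{RL}. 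Applied to the first- and second-order error terms $\Psi_{\eps,1}$, $\Psi_{\eps,2}$ (with an additional contradiction argument to show the correct $\eps$-scaling, \eqref{contraddiction.assumption}), it yields that the deviation from the leading-order flat profile grows at most linearly, resp.~quadratically, in $\theta$. Only this control lets one propagate the local approximation across $\partial\Omega$, derive the ODE $F' = iB_1(k_1)\kappa F$ as a genuine consequence (Lemma \ref{p.main.2}, Ascoli--Arzel\`a), and finally fix $m_\eps \sim \sqrt\eps$, i.e.~the uniform mass distribution \eqref{uniform.norm}. Your "main obstacle" paragraph correctly flags the phase-matching and the solvability step, but it misses that the real obstruction is proving a \emph{global} sub-polynomial growth bound for the error of a solution to a degenerate elliptic problem on the strip, and a corresponding rigidity theorem; without those, the asymptotic expansion remains formal and the theorem is not proved.
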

We stress that in the previous statement the integral in the variable $\xi \in \T$ is over the set $\{\xi \in \T \, \colon \, d(\xi, \xi_*) < \eps \}$ and averaged by its the measure. We stress that, as it will be the case in most parts of this paper, since $\eps \downarrow 0$, the set locally is diffeomorphic to the Euclidean space $\R$. 

\smallskip

The next theorem extends the previous result to the general case of the limit value $\lambda$ in \eqref{distance.landau} being between any two Landau levels. In contrast with the previous case, there is more than one value $k \in \R$ satisfying \eqref{roots}. Hence, at scale $\eps$ the eigenfunctions are expected to behave as a linear combination of the functions
$e^{ik_{j,\eps} \frac{\xi}{\eps}} H_j(k_{j,\eps}, \frac s \eps)$. In order to understand how each one of the eigenfunctions in the sum 
propagates along the boundary, i.e. proving the existence of $\theta_{l,\eps} \in \R$ above for each $l=1, \cdots, N$, one needs to deal with possible resonances between
the plane waves $e^{ik_{j,\eps} \frac{\xi}{\eps}}$, $j=1, \cdots, N$. For general values $k_{1,\eps}, \cdots, k_{N,\eps} \in \R$, indeed, the ``interaction'' between the
previous waves becomes negligible only when averaging in $\xi$ over lengthscales of order bigger than $\eps$. More precisely, for every $\alpha > 0$, one may find $M > +\infty$ such that for all
$j, l = 1, \cdots, N$ with $j \neq l$ 
\begin{align}\label{diophantine.main}
|\fint_{|\xi| < M\eps} e^{i (k_{j,\eps} - k_{l,\eps}) \frac{\xi}{\eps}} \, \d\xi | \leq \alpha.
\end{align}
Note that this implies in particular that for all $c_1, \cdots, c_N \in \C$ with $\sum_{l=1}^N |c_l|^2 \leq 1$
\begin{align}\label{diophantine.main.sum}
\bigl| \fint_{|\xi| < M\eps} |\sum_{l=1}^N c_l e^{i k_{l,\eps} \frac{\xi}{\eps}}|^2 \, \d\xi  - \sum_{l=1}^N |c_l|^2 \bigr| \leq \alpha N^2.
\end{align}
This motivates the need of a mesoscale $M_\eps \eps$, $M_\eps \to +\infty$ in the convergence result of the next theorem. However, we stress that the sequence $M_\eps$ may
be chosen to diverge at infinity as slowly as needed.

\smallskip

\begin{thm}\label{t.main}
Let $\{\Psi_\eps, \lambda_\eps\}_{\eps> 0}$ be as in the previous theorem. Let the family of associated eigenvalues $\{ \lambda_\eps\}_{\eps}$ satisfy \eqref{distance.landau} for some $\delta > 0$ and $N \in \N$. Let $\{ M_\eps \}_{\eps > 0} \subset \R_+$ be such that 
$$
\lim_{\eps \downarrow 0} M_\eps = +\infty , \ \ \ \ \lim_{\eps \downarrow 0} \eps M_\eps = 0.
$$
Then, there exists a sequence $\eps_j \downarrow 0$, constants $C_1, \cdots, C_{N} \in \C$ with $\sum_{j=1}^{N}|C_j|^2 =1$ such that the function
\begin{align*}
\Psi_{\text{flat},\eps}(\xi, s)&:= \sum_{j=1}^{N} C_l e^{i( \theta_l(\xi,s)- k_{l,\eps} \frac \xi \eps)}H_j(k_{l,\eps}, \frac s \eps),\\
\theta_{l,\eps}(\xi, s)&:= \eps^{-2}\bigl(-\int_0^\xi \alpha(y) \d y  + \frac{1}{2}\alpha'(\xi) s^2\biggr) + B_j(k_l) \int_0^\xi \kappa(y) \d y + \omega_\eps \xi 
\end{align*}
satisfies
\begin{align}\label{closeness.to.flat.theorem}
\lim_{j \uparrow +\infty}\sup_{\xi^* \in \T}\bigl(\fint_{d(\xi, \xi^*)<  M_j \eps_j} \int_0^{+\infty} |\Psi_{\eps_j}(\xi, s) -\frac{1}{\sqrt{ \eps_j}} \Psi_{\text{flat}, \eps_j}(\xi,s)|^2 \d\xi \, \d s \bigr)^{\frac 1 2}= 0.
\end{align}
\end{thm}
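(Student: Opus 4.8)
The plan is to carry out a two-scale / WKB analysis near the boundary, localizing in $\xi$ at the mesoscale $M_\eps\eps$, and to show that on each such window the rescaled eigenfunction is, up to a small error, an $L^2$-normalized linear combination of the flat model profiles $e^{ik_{l,\eps}\xi/\eps}H_j(k_{l,\eps},s/\eps)$, with a slowly varying amplitude and an explicitly determined phase correction. First, using Proposition \ref{t.localization} I reduce to the tubular neighbourhood $\Omega_{2\kappa}$: writing $\Psi_\eps$ in the curvilinear coordinates $(\xi,s)$ of \eqref{local.coordinates} and multiplying by a cutoff supported in $\{s<2\kappa\}$ produces, by \eqref{localization}, an error $O(\eps^n)$ in every relevant norm, so I may and do treat $\Psi_\eps$ as a function on $\T\times\R_+$ vanishing at $s=0$. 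Next, I rescale: set $\xi=\eps\eta$, $s=\eps\mu$, and expand the metric $g=(1-\kappa(\xi)s)^2\,d\xi^2+ds^2$ together with the gauge field $a=(\nabla\phi)^T$ of \eqref{magnetic.potential} in powers of $\eps$. The choice of gauge is crucial here: since $-\Delta\phi=1$ and $\phi|_{\partial\Omega}=0$, near the boundary $\phi(\xi,s)=-\alpha(\xi)s+\tfrac12\alpha'(\xi)... $ (with the curvature entering at the next order via $\Delta$ in curvilinear coordinates), which is exactly what produces the phase $\eps^{-2}(-\int_0^\xi\alpha+\tfrac12\alpha'(\xi)s^2)$ in $\theta_{l,\eps}$; conjugating $H_\eps$ by this phase turns the leading operator into $-\partial_\mu^2+(\mu+k)^2$ acting fibrewise, i.e. the harmonic oscillators $\O(k)$ of \eqref{harmonic.oscillator}, and the spectral constraint \eqref{distance.landau}–\eqref{roots} forces the only admissible fibre profiles to be the $H_j(k_j,\cdot)$.

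The core step is then a \emph{compactness plus limiting-equation} argument implemented window by window. Fix a base point $\xi^*$; on the window $d(\xi,\xi^*)<M_\eps\eps$, rescale to the variable $\eta\in(-M_\eps,M_\eps)$ (which exhausts $\R$ as $\eps\downarrow0$). The $\eps$-localization estimates of Proposition \ref{t.localization} give uniform bounds for $\Psi_\eps$ and its first two magnetic derivatives in weighted $L^2$, so after rescaling the functions $v_\eps(\eta,\mu):=\eps^{1/2}\Psi_\eps(\eps\eta,\eps\mu)$ (times the conjugating phase) are bounded in $H^2_{loc}$ with Gaussian-type decay in $\mu$ coming from the third bullet of Lemma \ref{l.basic.flat}; extract a weakly convergent subsequence. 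Passing to the limit in the rescaled, gauge-conjugated eigenvalue equation — here $\eps^2\lambda_\eps\to\lambda$ is used — the limit $v$ solves $H_0 v=\lambda v$ on $\R_+\times\R$ with Dirichlet data, hence by \eqref{spectrum.H_0}, Lemma \ref{l.basic.flat} and the fact that $\lambda\in(2N-1,2N+1)$ is attained only at the $N$ roots $k_1,\dots,k_N$, we get $v(\eta,\mu)=\sum_{l=1}^N c_l e^{ik_l\eta}H_l(k_l,\mu)$ for some constants $c_l$ depending a priori on $\xi^*$. This identifies the leading behaviour; the normalization $\sum|c_l|^2=1$ at a given window follows from strong $L^2_{loc}$ convergence (compact embedding in $\mu$ via the decay, in $\eta$ via $H^1$) together with the ``almost-orthogonality'' \eqref{diophantine.main.sum} of the plane waves $e^{ik_{l,\eps}\eta}$ averaged over the diverging window $(-M_\eps,M_\eps)$, which kills the cross terms in $|\sum c_l e^{ik_l\eta}H_l|^2$.

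To promote this from a single window to the uniform-in-$\xi^*$ statement \eqref{closeness.to.flat.theorem} and to pin down the \emph{phase} $\theta_{l,\eps}$, I derive the transport equation governing how the amplitudes $c_l=c_l(\xi)$ and phases evolve in $\xi$. Going to the next order in $\eps$ in the expansion of the conjugated operator, the first-order (in the slow variable $\xi$) solvability/Fredholm condition — projecting the $O(\eps^{-1})$ part of the equation onto each $H_l(k_l,\cdot)$ — yields an ODE of the form $i\nu_l'(k_l)\,\partial_\xi(\text{phase}_l) = (\text{curvature term})$, whose right-hand side is exactly $\kappa(\xi)\cdot\bigl(\text{the integrand in }B_l(k_l)\bigr)$ after using \eqref{formula.derivative.nu}; this is where the constants $B_l(k)$ of \eqref{quantities.corollary} and the curvature phase $B_j(k_l)\int_0^\xi\kappa$ come from, and the term $\omega_\eps\xi$ appears from the quantization/periodicity constraint that $\Psi_\eps$ be single-valued on $\T$ (the magnetic flux $|\Omega|/(2\pi\eps^2)$ must match an integer winding, leaving the fractional part $\omega_\eps$ as a residual phase velocity). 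Because $\nu_l'(k_l)\neq0$ (strict monotonicity of the branches, Lemma \ref{l.basic.flat}) and the cross-branch resonances are controlled precisely on the mesoscale $M_\eps\eps$ by \eqref{diophantine.main}, the amplitudes $|c_l|$ are constant to leading order while the relative phases are rigid, so the local limit is independent of $\xi^*$ and the convergence is uniform. \textbf{The main obstacle} is this last point: handling the possible resonances/near-resonances between the $N$ plane waves $e^{ik_{l,\eps}\xi/\eps}$ with rationally-related frequencies, so that the $N$ branches decouple and the transport equations close — this is exactly the homogenization-type difficulty the authors allude to, and it is the reason one cannot work at scale $\eps$ but must average over the diverging mesoscale $M_\eps\eps$, using \eqref{diophantine.main}–\eqref{diophantine.main.sum} quantitatively and a covering argument in $\xi^*$ to globalize.
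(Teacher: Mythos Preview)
Your proposal captures the right heuristic structure and correctly identifies several key ingredients (reduction to the tubular neighbourhood, blow-up at a boundary point, identification of the limit via the half-plane problem, the appearance of $B_l(k_l)$ from a solvability condition, and the role of the mesoscale $M_\eps\eps$ in handling resonances). However, there is a genuine gap in the passage from \emph{local} convergence at a fixed $\xi^*$ to the \emph{uniform-in-$\xi^*$} statement \eqref{closeness.to.flat.theorem}, and your proposed resolution (``covering argument in $\xi^*$ to globalize'') does not supply the missing tool.

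The issue is this: your compactness step gives, for each fixed $\xi^*$ and along a subsequence possibly depending on $\xi^*$, that the rescaled eigenfunction converges to some $\sum_l c_l(\xi^*) e^{ik_l\eta}H_l(k_l,\mu)$. To globalize you must show that (i) the same subsequence works for all $\xi^*$, and (ii) the coefficients $c_l(\xi^*)$ vary continuously and satisfy the transport ODE. A covering argument alone cannot do this, because compactness gives no rate: the error on each window could in principle vanish arbitrarily slowly and nonuniformly. What is actually needed---and what the paper supplies---is a \emph{quantitative} control on how the approximation error grows as one moves away from the blow-up center $\xi^*$ along the boundary. Concretely, the paper proves (Proposition~\ref{proposition.growth}) that if $(H_0-\lambda)\Psi=g$ with $g$ growing like $|\theta|^m$ in the tangential variable, then $\Psi$ grows at most like $|\theta|^{m+1}$; this is a nontrivial Fourier-analytic statement requiring careful treatment of the frequencies $k$ near the singular values $k_1,\dots,k_N$ where $\O(k)-\lambda$ fails to be invertible. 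This growth estimate, applied iteratively to the first- and second-order correctors (Proposition~\ref{p.main.main}), yields error bounds of the form $\lesssim d(\bar\xi+\xi^*,\xi^*)+M\eps$ and $\lesssim d(\bar\xi+\xi^*,\xi^*)^2+M\eps$ that are \emph{uniform in $\xi^*$ and in the displacement $\bar\xi$}. It is from these uniform two-scale estimates, not from soft compactness, that the paper extracts the approximate ODE \eqref{ODE.amplitude} for the amplitudes (see the functions $f_{l,\eps}(\xi,\xi^*)$ in the proof of Lemma~\ref{p.main.main.2}) and hence the uniform convergence of $F_\eps^{(l)}$.

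A secondary but related gap: you assert $\sum_l|c_l|^2=1$ at each window from strong $L^2_{loc}$ convergence plus almost-orthogonality. But strong convergence only tells you the limit of the local $L^2$-norm of $\Psi_\eps$ on the window, and you have no a priori reason that this norm is $1$ (it could concentrate on part of the boundary). In the paper this is handled by first normalizing by $m_\eps(M_\eps):=\sup_{\xi^*}(\text{local }L^2\text{-norm})$, carrying out the whole analysis with that normalization, and only at the very end proving $m_\eps(M_\eps)/\sqrt\eps\to 1$ as a \emph{consequence} of the uniform description of $\Psi_\eps$ along all of $\partial\Omega$ (see \eqref{uniform.N}). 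Your argument presupposes what needs to be proved.
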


\bigskip

\begin{rem}
We remark that if the values $k_1, \cdots, k_N$ are pairwise commensurable, namely there exist $\{ q_l \}_{l=2}^N \subset \Q$ such that $k_l = q_l k_1$ for all $l=2, \cdots, N$, then the family $\{M_\eps \}_{\eps >0}$ above may be chosen also to be constant, thus recovering the exact same estimate of Theorem \ref{t.main.easy}. In this case, indeed, we may find $M < +\infty$ such that all the integrals in \eqref{diophantine.main} vanish and the proof of Theorem \ref{t.main.easy} adapts to this case only with trivial modifications.
\end{rem}

\bigskip

It immediate to see that the description of the eigenfunctions $\{\Psi_\eps \}_{\eps >0}$ in the previous theorems in particular implies that for every family $\{M_\eps \}_{\eps>0}$ be as in Theorem \ref{t.main} and for every $\xi^* \in \T$ it holds
\begin{align}\label{cor.uniform.norm}
\lim_{\eps \downarrow 0} \fint_{\{ |\xi - \xi^*| < \eps M_\eps \}} \int_0^{+\infty} |\Psi_\eps |^2 \d s\,  \d\xi = 1.
\end{align}
In other words, the eigenfunction $\Psi_\eps$ propagates along the boundary $\partial \Omega$ and the mass $|\Psi_\eps|^2$ is basically uniformly distributed on the boundary $\partial \Omega$. Another consequence of the asymptotic estimate \eqref{closeness.to.flat.theorem} is the following corollary that describes, up to order $\eps^{-1}$ the behaviour of the spectrum of $H_\eps$ away from the rescaled Landau levels $\eps^{-2}\sigma_{\text{Landau}}$.
\begin{cor}\label{c.spectrum} 
Any eigenvalue $\lambda_\eps \in \sigma(H_\eps)$ satisfying \eqref{distance.landau} is such that for some $l \in \N$ and $q_\eps \in 2\pi\eps\Z$
\begin{align}\label{characterization.spectrum}
\lim_{\eps \downarrow 0} \eps^{-1}|\eps^2\lambda_\eps - \nu_{l}(q_\eps) - \eps 2\pi \nu'_{l}(q_\eps) B_{l}(q_\eps)| =0.
\end{align}
 Here, $B_l( \cdot )$ and $\omega_l$ as in \eqref{quantities.corollary}.  Vice versa, for all $l \in \N$ and $q_\eps \in  {2\pi}\eps \Z$ such that $\eps^{-2}\nu_{l}(q_\eps)$ 
 satisfies \eqref{distance.landau}, there exists $\lambda_\eps \in \sigma(H_{\eps})$ satisfying \eqref{characterization.spectrum}.
\end{cor}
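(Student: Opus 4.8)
\emph{Overview and reduction.}
The plan is to recast \eqref{characterization.spectrum} as a phase--closure (monodromy) condition for the flat mode on a branch $\nu_l$, to derive this condition from Theorem~\ref{t.main} for the forward implication, and to establish it by a quasimode construction for the converse. Fix a branch index $l\in\{1,\dots,N\}$; by \eqref{roots.branches}, $\eps^2\lambda_\eps=\nu_l(k_{l,\eps})$. Since $\nu_l$ is analytic (Lemma~\ref{l.basic.flat}) and $\nu_l',B_l$ are continuous with $\nu_l'$ nonvanishing near $k_l$ (so that $B_l$ is well defined there), a Taylor expansion of $\nu_l$ about any $q_\eps\in2\pi\eps\Z$ with $|q_\eps-k_{l,\eps}|=O(\eps)$ shows that
\[
\eps^{-1}\bigl|\eps^2\lambda_\eps-\nu_l(q_\eps)-\eps\,2\pi\,\nu_l'(q_\eps)B_l(q_\eps)\bigr|\to0
\iff
k_{l,\eps}=q_\eps+2\pi\eps\,B_l(k_{l,\eps})+o(\eps),
\]
and the latter, for a suitable $q_\eps\in2\pi\eps\Z$ (necessarily $q_\eps\to k_l$), is equivalent to $\dist\!\bigl(\tfrac{k_{l,\eps}}{2\pi\eps}-B_l(k_{l,\eps}),\Z\bigr)\to0$. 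I would then recognize $2\pi\,\dist\!\bigl(\tfrac{k_{l,\eps}}{2\pi\eps}-B_l(k_{l,\eps}),\Z\bigr)$ as $\dist(\Theta_l(\eps),2\pi\Z)$, where $\Theta_l(\eps)$ is the increment of the phase of $e^{i(\theta_{l,\eps}(\xi,s)-k_{l,\eps}\xi/\eps)}H_l(k_{l,\eps},s/\eps)$ as $\xi$ runs once around $\T$: using $\int_0^1\alpha(y)\,\d y=-|\Omega|$ (divergence theorem and \eqref{magnetic.potential}), $\int_0^1\kappa(y)\,\d y=2\pi$ (Gauss--Bonnet), the $1$--periodicity in $\xi$ of $\tfrac12\alpha'(\xi)s^2$, and the fact that the $\omega_\eps$--term of $\theta_{l,\eps}$ is arranged precisely so that it and $\eps^{-2}|\Omega|$ together contribute a multiple of $2\pi$, one gets $\Theta_l(\eps)=2\pi m_\eps+2\pi B_l(k_l)-k_{l,\eps}/\eps+o(1)$, $m_\eps\in\Z$. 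Thus \eqref{characterization.spectrum} holds for the index $l$ (for an appropriate $q_\eps$) iff the branch--$l$ flat mode \emph{closes up} along $\partial\Omega$, i.e. $\dist(\Theta_l(\eps),2\pi\Z)\to0$.

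\emph{Direct statement.}
I would prove $\min_{1\le l\le N}\dist(\Theta_l(\eps),2\pi\Z)\to0$, which by the reduction yields \eqref{characterization.spectrum} with $l=l_\eps$ the minimizer (always in $\{1,\dots,N\}$) and $q_\eps$ the associated point of $2\pi\eps\Z$. If this failed, then along some $\eps\downarrow0$ one would have $\dist(\Theta_l(\eps),2\pi\Z)\ge c>0$ for all $l$. Along a further subsequence, Theorem~\ref{t.main} supplies $C_1,\dots,C_N$ with $\sum|C_l|^2=1$ and \eqref{closeness.to.flat.theorem}. Since $\Psi_\eps$ --- read in the curvilinear variables $(\xi,s)$, which is legitimate by Proposition~\ref{t.localization} --- is $1$--periodic in $\xi$ whereas the explicit $\Psi_{\text{flat},\eps}$ need not be, comparing \eqref{closeness.to.flat.theorem} at $\xi^*$ and at $\xi^*+1$ and using the triangle inequality forces the period defect $G_\eps:=\Psi_{\text{flat},\eps}(\cdot+1,\cdot)-\Psi_{\text{flat},\eps}(\cdot,\cdot)$ to satisfy $\sup_{\xi^*}\fint_{d(\xi,\xi^*)<M_\eps\eps}\int_0^{+\infty}\tfrac1\eps|G_\eps|^2\,\d\xi\,\d s\to0$. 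Now $G_\eps=\sum_l C_l(e^{i\Theta_l(\eps)}-1)\,e^{i(\theta_{l,\eps}-k_{l,\eps}\xi/\eps)}H_l(k_{l,\eps},s/\eps)$, and the phase differences $\theta_{l,\eps}-\theta_{m,\eps}=(B_l(k_l)-B_m(k_m))\int_0^\xi\kappa$ are essentially constant on a mesoscale patch; integrating in $s$ (using $\int_0^{+\infty}|H_l(k,s/\eps)|^2\,\d s=\eps$) and averaging in $\xi$, the off--diagonal terms are annihilated by \eqref{diophantine.main} together with $M_\eps\to+\infty$, leaving $\sum_l|C_l|^2|e^{i\Theta_l(\eps)}-1|^2+o(1)\to0$. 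Since $|e^{i\Theta_l(\eps)}-1|$ is bounded below uniformly in $l$ and $\sum|C_l|^2=1$, this is a contradiction.

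\emph{Converse.}
Given $l$ and $q_\eps\in2\pi\eps\Z$ with $\eps^{-2}\nu_l(q_\eps)$ satisfying \eqref{distance.landau} (hence $q_\eps\to k_l$), I would solve $k_{l,\eps}=q_\eps+2\pi\eps\,B_l(k_{l,\eps})$ by contraction (valid for small $\eps$, $B_l$ being $C^1$ near $k_l$), obtaining $k_{l,\eps}=q_\eps+2\pi\eps B_l(q_\eps)+O(\eps^2)$, chosen so that $\Theta_l(\eps)\in2\pi\Z$; then
\[
\Phi_\eps(\xi,s):=\tfrac1{\sqrt\eps}\,\chi(s)\,e^{i(\theta_{l,\eps}(\xi,s)-k_{l,\eps}\xi/\eps)}\,H_l(k_{l,\eps},s/\eps),
\]
with $\chi$ a cutoff supported in $(0,2\kappa)$ and $\equiv1$ on $(0,\kappa)$, is a single--valued function in $H^2(\Omega)\cap H^1_0(\Omega)$ with $\|\Phi_\eps\|_{L^2(\Omega)}\to1$ (the decay of $H_l$ in Lemma~\ref{l.basic.flat} renders the cutoff and Jacobian corrections negligible). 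Writing $H_\eps$ in the coordinates $(\xi,s)$ and expanding the metric and the potential $a=(\nabla\phi)^T$ near $\partial\Omega$ --- this being the very computation from which $\theta_{l,\eps}$ and $B_l$ were derived --- the phase $\theta_{l,\eps}$ cancels the $O(\eps^{-2})$ and $O(\eps^{-1})$ parts of $(H_\eps-\bar\lambda_\eps)\Phi_\eps$, where $\bar\lambda_\eps:=\eps^{-2}\nu_l(k_{l,\eps})$, so that $\|(H_\eps-\bar\lambda_\eps)\Phi_\eps\|_{L^2(\Omega)}=O(1)=o(\eps^{-1})$. By self--adjointness and discreteness of $\sigma(H_\eps)$ there is $\lambda_\eps\in\sigma(H_\eps)$ with $|\lambda_\eps-\bar\lambda_\eps|=O(1)$; then $\eps^2\lambda_\eps\to\lambda$ (so \eqref{distance.landau} holds) and $\eps^2|\lambda_\eps-\bar\lambda_\eps|=o(\eps)$, while $\eps^2\bar\lambda_\eps=\nu_l(q_\eps)+2\pi\eps\,\nu_l'(q_\eps)B_l(q_\eps)+o(\eps)$ by Taylor expansion (since $|k_{l,\eps}-q_\eps|=O(\eps)$); together these give \eqref{characterization.spectrum}.

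\emph{Main obstacle.}
I expect the two delicate points to be: in the direct part, upgrading the subsequential output of Theorem~\ref{t.main} to a full--limit statement --- handled above through the finiteness of the set of branches $\{\nu_1,\dots,\nu_N\}$ attaining $\lambda$ and a contradiction argument; and in the converse, the quasimode bound $\|(H_\eps-\bar\lambda_\eps)\Phi_\eps\|_{L^2}=O(1)$ --- only a crude estimate, since $H_\eps\sim\eps^{-2}$ on such states and the eigenvalue is sought only to accuracy $o(\eps^{-1})$, but one still has to track the boundary--curvature terms of the Laplace--Beltrami operator and the Taylor expansion of $\phi$ near $\partial\Omega$, essentially repeating the lengthy but routine computation already behind Theorems~\ref{t.main.easy}--\ref{t.main}.
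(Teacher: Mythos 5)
Your forward implication, though presented as a contradiction argument (assuming all the phase increments are bounded away from $2\pi\Z$), carries out the same steps as the paper: you invoke Theorem~\ref{t.main} along a subsequence to obtain the constants $C_l$, exploit the $1$--periodicity of $\Psi_\eps$ in the arc--length variable, expand the squared defect of periodicity of $\Psi_{\text{flat},\eps}$, kill the off--diagonal terms via \eqref{diophantine.main} together with $M_\eps\to\infty$, and use $\sum_l|C_l|^2=1$ to produce a branch $l$ on which $e^{i\Theta_l(\eps)}\to1$; the Taylor expansion of $\nu_l$ about $q_\eps\in2\pi\eps\Z$ then gives \eqref{characterization.spectrum}. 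The phase--closure reformulation is a clean way to organize this, but it is not a different argument. Two small points: there is a factor-of-$2\pi$ bookkeeping issue around $\omega_\eps$ in \eqref{quantities.corollary} that you (and the paper) glide over, and in your reduction you should write $B_l(k_l)$ rather than $B_l(k_{l,\eps})$ in $\Theta_l(\eps)$, though the two agree up to $o(1)$ so nothing is lost.

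Your converse, on the other hand, has a genuine gap. You take the trial function
\[
\Phi_\eps(\xi,s)=\tfrac1{\sqrt\eps}\,\chi(s)\,e^{i(\theta_{l,\eps}(\xi,s)-k_{l,\eps}\xi/\eps)}\,H_l(k_{l,\eps},s/\eps)
\]
and assert that ``the phase $\theta_{l,\eps}$ cancels the $O(\eps^{-2})$ and $O(\eps^{-1})$ parts of $(H_\eps-\bar\lambda_\eps)\Phi_\eps$,'' yielding $\|(H_\eps-\bar\lambda_\eps)\Phi_\eps\|_{L^2}=O(1)$. This is not correct. After the gauge change $e^{-i\eps^{-2}\rho_\eps}$, the Hamiltonian reads $\eps^{-2}(H_0+\eps H_{1,\eps}+\eps^2 H_{2,\eps})$ (Lemma~\ref{l.local.hamiltonian}), and the slow phase factor $e^{iB_l(k_l)\int_0^\xi\kappa}$ shifts the effective wave number $k_{l,\eps}\mapsto k_{l,\eps}-\eps B_l(k_l)\kappa(\xi)$, which — by the very definition of $B_l$ in \eqref{quantities.corollary} — cancels only the \emph{component of $H_{1,\eps}\Phi_\eps$ along $H_l(k_{l,\eps},\cdot)$}. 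The component orthogonal to $H_l(k_{l,\eps},\cdot)$ survives and contributes an error of order $\eps\cdot\eps^{-2}=\eps^{-1}$, not $O(1)$. This is exactly the reason the paper's trial function has the additional amplitude corrector $\eps W_\eps(\xi,\mu)$ with $W_\eps\perp H_l(k_{l,\eps},\cdot)$ solving $(\O(k_{l,\eps})-\nu_l(k_{l,\eps}))W_\eps=-P_l^\perp(k_{l,\eps})[\cdots]$. Without this corrector you only obtain $\dist(\bar\lambda_\eps,\sigma(H_\eps))\lesssim\eps^{-1}$, hence $\eps^{-1}|\eps^2\lambda_\eps-\nu_l(q_\eps)-2\pi\eps\nu_l'(q_\eps)B_l(q_\eps)|=O(1)$, which does \emph{not} give \eqref{characterization.spectrum}. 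The fix is standard and brings you back to the paper's construction, but the step as written is wrong and the claimed $O(1)$ bound does not follow from the quasimode you propose.
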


\subsection{Main ideas in the proofs of  Theorems \ref{t.main.easy}-\ref{t.main}}\label{subsection.main.ideas}

The proofs of Theorems \ref{t.main.easy} and \ref{t.main} rely on the same strategy and they
only differ in the parts where the resonance of the waves $ \{ e^{i k_{j,\eps}\frac \xi \eps} \}_{j=1}^N$ gives some technical challenges (c.f. \eqref{diophantine.main}).
We therefore discuss the strategy in the case of Theorem \ref{t.main.easy}. We remark that Proposition \ref{t.localization} yields that $\Psi_\eps$ is localized close to the
boundary but does not give any information on how the norm is distributed along it. It does not exclude, indeed, that $\Psi_\eps$ is localized only around a portion of the
boundary $\partial\Omega$. In order to show that $\Psi_\eps$ is an edge state, we need to rule out this scenario. This is the real challenge in the
proof of Theorems \ref{t.main}-\ref{t.main.easy} as it requires to tie together the microscopic behaviour at scale $\eps$ of $\Psi_\eps$ with its macroscopic behaviour
along $\partial\Omega \sim \T$. 

\smallskip

By the localization estimates of Proposition \ref{t.localization}, it is natural to believe that if we fix a point $(\xi^*, 0)$ of the boundary $\partial\Omega$ and rescale
the local coordinates $(\xi, s) \mapsto (\eps \theta + \xi^*, \eps \mu)$ around such point, the magnified problem \eqref{spectral.intro} ``resembles'' the one in
\eqref{eigenvalue.flat}  in the new coordinates  $(\theta, \mu) \in \R \times \R_+$. In other words, when $\eps<<1$, the behaviour of the rescaled function
$$
\tilde\Psi_\eps(\theta, \mu):= \sqrt\eps \Psi_\eps(\eps \theta, \eps\mu)
$$
in the sets $\{ |\theta| < R \} \times \R_+$, $R> 0$, is expected to be close to a multiple of the eigenfunction $e^{i k_{\eps, 1} \theta} H_1(k_{1,\eps}, \mu)$ 
 corresponding to the eigenvalue $\eps^2 \lambda_\eps$ for $H_0$ in the half-plane (c.f. Lemma \ref{l.basic.flat}).
To rigorously prove this, the main technical challenge is to characterize the limit for $\tilde \Psi_\eps$: We do this by proving a rigidity/Liouville statement for 
(very) weak solutions to \eqref{eigenvalue.flat} that have a certain growth in variable $\theta$ (see Lemma \ref{l.identification}). We note that the scaling $\sqrt \eps$ in
the definition of $\tilde\Psi_\eps$ is the correct one once we post-process the estimate
of the main theorem. In the proof, we need to first rescale $\Psi_\eps$ by $\frac{\eps}{m_\eps}$, with $m_\eps$ being the maximum of the $L^2$ norm of $\Psi_\eps$ on the
sets $\{\xi \, \colon \,  d(\xi; \xi^*) < \eps \}\times \R_+$, for $\xi^* \in \T$.

\smallskip

By the reasoning of the previous paragraph, for every point $(\xi^*, 0)$ on the boundary, we may find a suitable constant
$A_\eps(\xi) \in \C$, $|A_\eps(\xi^*)| \leq 1$, such that  for every $R> 0$
\begin{align}\label{qualitative.convergence}
\int_{|\theta|< R}\int_0^{+\infty} |\tilde\Psi_\eps - A_\eps(\xi^*) e^{i k_{\eps, 1} \theta} H_1(k_{1,\eps}, \mu)|^2 \d\theta \, \d\mu \to 0.
\end{align}
The next step, and the main challenge, is therefore to obtain a description of the behaviour of the amplitude $A_{\eps} \in L^\infty( \T)$ along $\partial\Omega$.
In particular, we prove that $A_\eps(\xi) \to F(\xi)$ uniformly on $\T$, with $F$ solving for every $\xi^* \in \T$ 
the boundary value problem
\begin{align}\label{ODE.amplitude}
\begin{cases}
F' = i \kappa B_1(k_1) F \ \ \text{in $\{ \xi \, \colon \, d(\xi; \xi^*) < \frac 1 2 \}$}\\
|F(\xi^*)| = 1
\end{cases}
\end{align}
This implies that the function $A_{\eps} \in L^\infty( \partial\Omega)$ approximately behaves as the phase $e^{i B_1(k_1)\int_0^\xi \kappa(x)}$ and that $|\Psi_\eps|^2$ is roughly homogeneous along
$\partial\Omega$. 

\smallskip

To show this, we need to push \eqref{qualitative.convergence} also up to lengthscales $R \sim \frac 1 \eps$. In the original coordinates this means passing from an 
approximation for $\Psi_\eps$ in neighbourhoods of size $\eps$ of the boundary to neighbourhoods of size $1$. We thus need to consider the error term
$$
\Psi_{\eps, 1}(\xi^*, \theta, \mu):= \frac{\tilde \Psi_\eps(\theta, \mu) - A_\eps(\xi^*) e^{i k_{\eps, 1} \theta } H_1(k_{1,\eps}, \mu)}{\eps}
$$
and show that it grows at most linearly (with constant independent from $\eps$) in the angular variable $\theta \in \T_{\frac 1 \eps}$.
This is the content of Proposition \ref{p.main}; the proof of \eqref{ODE.amplitude} from the sublinearity result of Proposition \ref{p.main} is contained instead in Lemma
\ref{p.main.2}. 

\smallskip

We stress that the proof of the previous results crucially relies on Proposition \ref{proposition.growth} in Section \ref{s.auxiliary}: Roughly speaking, this
result states that if $\Psi$ solves \eqref{eigenvalue.flat} with a further right-hand side that grows as a polynomial of degree $m$ in the variable
$\theta$, then $\Psi$ grows in $\theta$ at most like a polynomial of degree $m+1$. This result combines techniques of Fourier analysis with resolvent estimates and allows
to deal also with the frequencies where the operator $H_0 - \lambda$ becomes singular (i.e. the values of $k$ such that $\nu_l(k)=\lambda$). We mention that a tool that is extensively used 
throughout all the previous results is the standard relation between the order of growth of a function at infinity and the regularity of its Fourier transform.

\smallskip

In order to obtain a description of the derivative of $A_\eps$ and prove \eqref{ODE.amplitude}, we need to perform the previous steps up to the second-order error:
More precisely, for every $\xi^* \in \T$ corresponding to a point on $\partial\Omega$, we find the approximation for $\tilde\Psi_\eps$ in the sense of 
\eqref{qualitative.convergence} up to a linear correction in $\eps\theta (= \xi)$. In other words, we define
$$
(A_\eps(\xi^*) + i B_\eps(\xi^*)\eps \theta) e^{i k_{\eps, 1} \theta} H_1(k_{1,\eps}, \mu),
$$
and show that the second-order error 
$$
\Psi_{\eps, 2}(\xi^*, \theta, \mu):= \frac{\tilde \Psi_\eps(\theta, \mu) - (A_\eps(\xi^*) + i B_\eps(\xi^*)\eps \theta) e^{i k_{\eps, 1} \theta} H_1(k_{1,\eps}, \mu)}{\eps^2}
$$
grows at most quadratically in $\theta \in \T_{\frac 1 \eps}$. This allows us not only to give a macroscopic characterization of $A_\eps$, but also to describe its (approximate) 
derivative via the slope $B_\eps$. 

\subsection{Generalizations}\label{sub.generalizations}
\noindent{\bf Additional electric potential $V$. } By inspecting the proofs of the main theorems it is easy to see that the arguments can be adapted
to the case of the Hamiltonian $H_\eps + V_\eps$, for any uniformly bounded scalar (real) potential $\|V_\eps \|_{L^\infty(\R^2; \R)} \leq C$, or for $V_\eps= \eps^{-1} V$, with 
$V \in L^\infty(\R^2; \R)$. In the first case, the result is the same of the case $V_\eps \equiv 0$, with the constants in the estimates of the theorems and corollary also depending
on $\sup_{\eps > 0}\| V \|_{L^\infty}$. In the second case, the effect of the potential $V_\eps$ will appear in the function $\theta_\eps$ and in  the asymptotic estimates
for the eigenvalues $\lambda_\eps$. The function $\theta_\eps$ contains, indeed, also the term {$\int_0^{\xi} V((y,0)) \d y$}. In the asymptotic estimate for $\lambda_\eps$, in
 the term of order $\eps^{-1}$ the constant $\nu'_{l}(q_\eps)(B_{l}(q_\eps))$ is substituted by  $\nu'_{l}(q_\eps)\bigl(B_{l}(q_\eps) +  \int_{\partial\Omega} V(x) \d x \bigr)$.

\bigskip

\noindent{\bf Exterior domains.} In the case of an exterior domain $\Omega = \R^2 \backslash K$, with $K \subset \R^2$ compact, simply connected and having $C^4$ boundary,
all the main results hold with trivial modifications. In this case, we stress that the vector field $\phi$ in the definition \eqref{magnetic.potential} of the magnetic 
potential $a$ may be chosen as the solution to the Poisson problem \eqref{magnetic.potential} in the exterior domain $\Omega$ such that
$$
\limsup_{|x|\to +\infty} |\phi- \frac{1}{4}|x|^2 | < +\infty.
$$
Furthermore, in the definition of $\omega_\eps$ in \eqref{quantities.corollary}, the term $|\Omega|$ has to be replaced by $|K| < +\infty$.

\bigskip

\noindent{\bf Non-constant magnetic fields.} Let us consider a magnetic field $\nabla \times a = b e_3$, with 
$b \in C^1(\R^2) \cap L^\infty(\R^2)$. We define the subset of $\R$ 
\begin{align*}
\sigma_{\text{Landau}, b}:= \{ b(x)( 2n + 1) \, \colon \, x \in \R^2, \, n\in \N \}. 
\end{align*}
Then, if the previous subset of $\R$ has gaps, namely  $\sigma_{\text{Landau}, b}$ is not connected, our main results hold provided that the limit value 
$\lambda$ for the sequence $\eps^2\lambda_\eps$ is chosen inside a gap. In this case, the values $k_{1}, \cdots, k_N$, as well as $k_{1,\eps}, \cdots, k_{N,\eps}$ do depend
on $x \in \partial\Omega$. Therefore, in the local coordinates, they are functions of the angular variable $\xi\in \T$. Note that the assumption that $\lambda$ is in a gap
of $\sigma_{\text{Landau}, b}$ yields that each $k_{l, \eps}= k_{l,\eps}(\xi)$ satisfies, for $\eps$ small enough, the identity
$\nu_l( k_{l,\eps}(\xi)) = \eps^2 \lambda_\eps$ for every $\xi \in \T$, with the index $l\in \N$ being fixed. 

\bigskip

\subsection{Notation}
Throughout this paper we employ the notation $\lesssim$ or $\gtrsim$ for $\leq C$ or $\geq C$ with the constant $C$ depending on the domain $\Omega$ and the parameters $\delta$, $N$ in \eqref{distance.landau}. When no ambiguity occurs, we skip omit the target space in the notation for the function spaces used in this paper and write, for instance, $\Psi_\eps \in L^2(\Omega)$ instead of $\Psi_\eps \in L^2(\Omega ; \C)$. 

\section{Proof of Proposition \ref{t.localization} and of Theorem \ref{t.main.easy}}\label{s.localization}

\begin{proof}[Proof of Proposition \ref{t.localization}]
Since $\lambda$ is assumed to satisfy \eqref{distance.landau} and $\eps^2 \lambda_\eps \to \lambda$, we may select $\eps_0$ such that for all $\eps \leq \eps_0$ also
$\eps^2\lambda_\eps$ satisfies \eqref{distance.landau}. We prove the statement for all such $\eps$. Throughout this proof the constant implied in the notation $\lesssim$ or $\gtrsim$ also depends on the exponent $n\in \N$ in \eqref{l.localization.step.1}. In addition, we denote by $\| \cdot \|$ the $L^2$-norm in the domain $\Omega$ and write $d=d(x)$ instead of $d_{\partial\Omega}(x)$ for the distance function from the boundary of $\Omega$.

\bigskip

We first argue that it suffices to prove
\begin{align}\label{l.localization.step.1}
 \| \phi^n \Psi_\eps \| + \eps \| \phi^n (\nabla + i \eps^{-2}a) \Psi_\eps \| + \eps^2 \| \phi^n (\nabla + i \eps^{-2}a)^2&\Psi_\eps\| \lesssim \eps^n  \|\Psi_\eps\|,
\end{align}
where $\phi$ is the solution to \eqref{magnetic.potential}. Note that, since $\Omega$ has $C^4$- boundary, by standard elliptic regularity we have that {$\phi \in C^{2,\alpha}(\bar \Omega)$}, $0< \alpha < 1$. In addition, by the maximum principle, $\phi > 0$ in $\Omega$. 

\smallskip

For $r >0$, let 
$$
 \Omega_{r}:= \{ x \in \Omega \, \colon \, \mathop{dist}(x, \partial\Omega) < r \}
$$
and let $\eta_r$ be a cut-off function for $\Omega_r$ in $\Omega_{2r}$. Since $\Omega$ is bounded, by the
maximum principle the function $\phi$ attains a positive minimum in $\Omega \backslash \Omega_{r}$. Hence, there exists a constant $C(r)> 0$ such that
\begin{align*}
 \| (1 -\eta_r) d^n \Psi_\eps \|^2 \leq C(r, n) \| \phi^n \Psi_\eps \|^2 \stackrel{\eqref{l.localization.step.1}}{\lesssim}\eps^{n}. 
\end{align*}
Since the same may be done for the other terms $(1-\eta_r)( \nabla + i\eps^{-2}a) \Psi_\eps$ and $(1-\eta_r)(\nabla + i\eps^{-2}a)^2\Psi_\eps$, we prove Proposition \ref{t.localization} from \eqref{l.localization.step.1} provided that we show that we may fix $r> 0$ such that
\begin{align}\label{l.localization.scale.r}
  \| d^n\eta_r\Psi_\eps \| + \eps  \| d^n\eta_r( \nabla + i \eps^{-2}a)\Psi_\eps \|+  \eps^2  \| d^n\eta_r (\nabla + i \eps^{-2}a)^2\Psi_\eps \| \lesssim \eps^n.
\end{align}
This is an easy consequence of the regularity of the domain $\Omega$ and the definition of $\phi$: Restricting to $r < \frac{\delta_0}{2}$  (see \eqref{tubular.neighbourhood}), we may pass into the local curvilinear coordinates $(\xi,s)$ and simply rewrite $d(x) = s$ in $\Omega_{r}$. By the regularity of $\partial\Omega$ and {the Implicit Function Theorem}, there exists a $0<r < \frac{\delta_0}{2}$ such that $\phi(\xi,s) = \alpha(\xi) s + \epsilon(\xi, s)$,  for every $(\xi, s) \in \T \times [0,2r]$, with $|\epsilon(\xi, s)| \lesssim s^2$ and $\alpha$ as in \eqref{functions.curvature}. On the one hand, $|\alpha(\xi)| < C$ for some $C< +\infty$ by the regularity of $\partial\Omega$. On the other hand, Hopf's lemma and the compactness of the boundary imply that there exists a positive constant $c > 0$ for which $\alpha(\xi) > c$ for all $\xi \in \T$. Hence, there exist $C, c >0$ (depending on the domain $\Omega$) such that
\begin{align}\label{comparable.to.distance}
c d \leq \phi \leq C d, \ \ \ \ \text{in $\Omega_r$}
\end{align}
This inequality implies \eqref{l.localization.scale.r} and, in turn, allows to establish \eqref{t.localization} from \eqref{l.localization.step.1}.

\smallskip

We now turn to \eqref{l.localization.step.1} and begin by showing that
\begin{align}\label{l.localization.2}
 \eps \|\phi^n (\nabla + i\eps^{-2}a)\Psi_\eps \| + \eps^2 \|\phi^n (\nabla + i \eps^{-2}a)^2 \Psi_\eps \| \lesssim \|\phi^{n}\Psi_\eps\| + \eps\|\phi^{n-1} \Psi_\eps \|,
\end{align}
so that \eqref{l.localization.step.1} reduces to prove that for all $n\in \N$
\begin{align}\label{l.localization.step.1.b}
\| \phi^n \Psi_\eps \|_{L^2(\Omega)} \lesssim \eps^n  \|\Psi_\eps\|_{L^2(\Omega)}.
\end{align}
By testing the equation \eqref{spectral.intro} with $\Psi_\eps$ itself, we obtain immediately that
\begin{align}\label{energy.estimate.localization}
 \|(\nabla + i\eps^{-2}a)\Psi_\eps\|^2 \leq \lambda_\eps \|\Psi\|^2 \stackrel{\eqref{distance.landau}}{\lesssim}\eps^{-2}\|\Psi_\eps\|^2.
\end{align}
Hence, if we test \eqref{spectral.intro} with $\phi^{2n} \Psi_\eps$ for $n\in \N$, the previous energy estimate, the fact that $\nabla\phi$ is bounded and again 
\eqref{distance.landau} yield
\begin{align}\label{l.localization.1}
\|\phi^n (\nabla + i\eps^{-2}a)\Psi_\eps \| \lesssim \eps^{-1}\|\phi^{n}\Psi_\eps\| + \|\phi^{n-1} \Psi_\eps \|.
\end{align}
Furthermore, since $\Psi_\eps \in H^2(\Omega)$, we may bound as well
\begin{align}\label{l.localization.2}
\|\phi^n (\nabla + i \eps^{-2}a)^2 \Psi_\eps \| \lesssim \lambda_\eps \|\phi \Psi_\eps \| \stackrel{\eqref{distance.landau}}{\lesssim} \eps^{-2} \| \phi^n \Psi_\eps\|.
\end{align}
This inequality, together with \eqref{l.localization.1}, implies \eqref{l.localization.2} and thus reduces the proof of \eqref{l.localization.step.1} to \eqref{l.localization.step.1.b}.

\bigskip

We argue \eqref{l.localization.step.1.b} as follows: By gauge invariance, we prove the inequality for the solution $\bar \Psi_\eps$ to \eqref{spectral.intro},
with magnetic potential given by $\eps^{-2}a_0, \ \ \ \ a_0(x) = - x_1 e_2$, where we denote $(x_1, x_2) \in \R^2$ and where $e_2 \in \R^2$ is the second versor of the standard canonical base of $\R^2$. To keep the notation lean, throughout the proof of \eqref{l.localization.step.1.b} we write $\Psi_\eps$ instead of $\bar \Psi_\eps$. 

\smallskip

We prove \eqref{l.localization.step.1.b} by induction over $n\in \N$. We begin with the case $n=1$: By rewriting the equation for $\Psi_\eps$ in the new variables $\tilde x = \eps^{-1}x$, we have that the function
$\tilde \Psi_\eps (\tilde x )= \Psi_\eps(\eps \tilde x)$ satisfies in the domain $\frac 1 \eps \Omega:=\{ y \in \R^2 \, \colon \eps y \in \Omega \}$ the boundary value problem
\begin{align}
\begin{cases}
 -(\nabla + i a_0) \cdot (\nabla + i a_0)\tilde\Psi_\eps = \eps^2\lambda_\eps \tilde \Psi \ \ \ \ &\text{in $\frac{1}{\eps}\Omega$}\\
 \tilde\Psi_\eps= 0 \ \ \ &\text{on $\partial (\frac{1}{\eps}\Omega)$}.
\end{cases}
\end{align}
If $\phi$ is as in \eqref{magnetic.potential}, then $\tilde \phi (\tilde x )= \phi(\eps \tilde x)$ satisfies
\begin{align}\label{bounds.landscape.rescaled}
\|\nabla \tilde\phi \|_{L^\infty(\Omega)} +\eps^{-1}\| \nabla^2 \tilde\phi \|_{L^\infty(\Omega)} \lesssim \eps.
\end{align}
Then, the function $F_\eps:= \tilde\phi \tilde \Psi_\eps \in H^1(\R^2)$ weakly solves
\begin{align*}
-(\nabla + i a_0) \cdot (\nabla + i a_0) F_\eps = \eps^2\lambda_\eps F_\eps + \tilde R_\eps \ \ \ \ \text{in $\R^2$},
\end{align*}
where the error term $\tilde R_\eps:= 2\nabla \tilde \phi \cdot (\nabla + i a_0)\tilde\Psi_\eps - \Delta\tilde\phi \tilde \Psi_\eps$ satisfies
\begin{equation}\label{close.to.eigenvalue}
\| \tilde R_\eps \| \stackrel{\eqref{bounds.landscape.rescaled}-\eqref{energy.estimate.localization}}{\lesssim} \eps \| \tilde \Psi_\eps\|.
\end{equation}
This implies that
\begin{align}\label{resolvent.distance}
\|\phi \Psi_\eps \| \lesssim \frac{1}{\mathop{dist}(\eps^2\lambda_\eps, \sigma_{\text{Landau}})} \| R_\eps \|. 
\end{align}
Note that $F_\eps$ is in the domain of the operator, by standard elliptic regularity theory and since the domain $\frac 1 \eps \Omega$ is bounded.
 
\medskip

Let us now assume that \eqref{l.localization.step.1.b} is true for all $n \leq n_0$ with $n_0 \in \N$. Then, the function 
$$
F_{n_0+1, \eps}:= \tilde\phi^{n_0+1}\tilde\Psi_\eps
$$ 
solves
\begin{align*}
-(\nabla + i a_0) \cdot (\nabla + ia_0)F_{n_0+1, \eps} = \eps^2\lambda_\eps  F_{n_0+1, \eps} + R_{n_0,\eps} \ \ \ \ \ \text{ in $\R^2$},
\end{align*}
with
\begin{align*}
R_{n_0,\eps} =2 (n_0+1) \tilde\phi^{n_0} \nabla \tilde\phi \cdot (\nabla + i a_0) \tilde\Psi_\eps + \Delta( \tilde\phi^{n_0+1}) \tilde\Psi_\eps.
\end{align*}
By \eqref{bounds.landscape.rescaled} and the rescaled version of \eqref{l.localization.2}, the error $R_{n_0,\eps}$ satisfies
\begin{align*}
\|\tilde R_{n_0,\eps} \| \lesssim  \eps (\|F_{n_0,\eps}\| + \eps\|F_{n_0,\eps}\| + \eps \| F_{n_0-1,\eps}\|).
\end{align*}
By arguing again as in the case $n=1$ via the resolvent estimate in \eqref{resolvent.distance} we get that
$$
\| F_{n_0+1, \eps} \| \lesssim  \eps (\|F_{n_0,\eps}\| + \eps \| F_{n_0-1,\eps}\|).
$$
From this, the induction hypothesis and the definition of the functions $F_{n, \eps}$ yield \eqref{l.localization.step.1.b} for $n= n_0+1$. 
This establishes \eqref{l.localization.step.1.b} and concludes the proof of the theorem.
\end{proof}

\subsection{The Hamiltonian $H_\eps$ in curvilinear coordinates}
Before giving the proof of the Theorem \ref{t.main.easy}, we need some technical lemmas allowing to express the Hamiltonian $H_\eps$ of \eqref{spectral.intro} in the local curvilinear  coordinates $(\xi, s)$ introduced in \eqref{local.coordinates}. Since, as already shown in Proposition \ref{t.localization}, the eigenfunctions are localized at scale $\eps$ from the boundary, it we mostly work with the rescaled coordinates around any point $(\xi^*, 0)$ of the boundary $\partial\Omega$ 
 \begin{equation}\label{rescaling.local}
(\mu, \theta) \in \T_{\frac 1 \eps} \times (0, \frac \delta \eps), \ \ \ \ \  \begin{cases}
 \mu = \frac{1}{\eps} s\\
 \theta= \frac{1}{\eps}( \xi - \xi^*).
 \end{cases}
 \end{equation}
 
We remark that the Lam\'e coefficients associated to the change of coordinates \eqref{local.coordinates} equal
\begin{align*}
 h_\xi := |\partial_\xi x(\xi, s)| = |\vT - s(\vN(\xi))'|, \ \ \ \ \ h_s:= |\partial_s x(\xi, s)| = |\vN(\xi)|=1,
\end{align*}
so that by the standard formula $(\vN(\xi))'= -\kappa \vT$, we obtain
\begin{align}\label{lame.coeff}
 h_\xi := (1 + \kappa(\xi) s), \ \ \ \ \ h_s:=1.
\end{align}
These, in particular, allow us to rewrite the gradient $\nabla$ in the Cartesian variables $x\in \R^2$ as
\begin{align}\label{gradient.curvilinear}
 \nabla g =\frac{1}{1+\kappa(\xi) s} \partial_\xi g \, \vT + \partial_s g \, \vN.
\end{align}
 \begin{lem}\label{l.local.magnetic.potential}
Let $a$ be as in \eqref{magnetic.potential} and the functions $\alpha, \kappa$ as in \eqref{functions.curvature}.
We consider their periodic extensions from $\T$ to the whole line $\xi \in \R$. Finally, let $\delta$ be as in \eqref{tubular.neighbourhood} and $\omega_\eps$ be as in  \eqref{quantities.corollary}. Then, there exist $\delta_1= \delta_1(\partial\Omega) < 2\delta$ and $C=C(\partial \Omega) < +\infty$ such that the function $\rho_\eps: \R^2 \to \R$
\begin{align}\label{multivalued.gauge.function}
\rho_\eps(\xi, s) := -\int_0^\xi \alpha(\tilde \xi) \d \tilde \xi + \frac 1 2 \alpha'(\xi) s^2 +\eps^2 \omega_\eps \xi, \ \ \ \ (\xi, s) \in  \R \times [0, \delta_1]
\end{align}
has (Cartesian) gradient $\nabla \rho_\eps$ that is 1-periodic in the variable $\xi$ and  satisfies for all $(\xi, s)\in \T \times  [0, \delta_1]$
\begin{align}\label{local.magnetic.potential}
\bigl | a(\xi, s) + \nabla\rho_\eps - \biggl(s - \frac{\kappa(\xi)}{2} s^2\biggr) \vT(\xi) + \frac 1 2 \biggl(3\alpha'(\xi) \kappa(\xi) +\alpha(\xi)\kappa'(\xi) \biggr) &s^2 \vN(\xi)\bigr|\\
&\leq C (s^3 + \eps^2).
\end{align}
Moreover, while $\rho_\eps$ is not periodic in $\xi$ and thus has to be defined on the entire slab $(\xi, s) \in \R \times [0, \delta_1]$, its exponential $e^{i\eps^{-2}\rho}$ is $1$-periodic in $\xi$ and thus is a well-defined change of gauge in $\T \times  [0, \delta_1]$. 
\end{lem}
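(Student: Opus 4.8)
The plan is to reduce the pointwise identity \eqref{local.magnetic.potential} to a Taylor expansion of the landscape function $\phi$ in the normal variable $s$, to substitute this expansion into $a=(\nabla\phi)^T$ rewritten in curvilinear coordinates via \eqref{gradient.curvilinear}--\eqref{lame.coeff}, and finally to match the outcome against the explicit gradient of $\rho_\eps$. First I would fix $\delta_1=\delta_1(\partial\Omega)>0$ small enough that $(\xi,s)\mapsto x(\xi,s)$ is a diffeomorphism of $\T\times[0,\delta_1]$ onto a subset of the tubular neighbourhood \eqref{tubular.neighbourhood} and that $\|\kappa\|_{L^\infty}\delta_1<1$, so that $\tfrac1{h_\xi}=\tfrac1{1+\kappa(\xi)s}=\sum_{m\ge0}(-\kappa(\xi)s)^m$ converges. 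Since $\partial\Omega\in C^4$ and $-\Delta\phi=1$ has smooth data, standard elliptic regularity gives enough smoothness of $\phi$ up to $\partial\Omega$ to write, in the coordinates $(\xi,s)$,
\[
\phi(\xi,s)=\alpha(\xi)\,s+a_2(\xi)\,s^2+a_3(\xi)\,s^3+\dots,
\]
with a remainder that (also after one differentiation in $\xi$ or $s$) contributes only at order $O(s^3)$ to the final estimate; the linear coefficient is $\alpha$ from \eqref{functions.curvature} because $\phi$ vanishes on $\partial\Omega$ and $\partial_n\phi(f(\xi))=\alpha(\xi)$. The coefficients $a_2,a_3$ are then forced by inserting the ansatz into $-\Delta\phi=1$ written in curvilinear form,
\[
-\frac1{h_\xi}\Big[\partial_\xi\Big(\frac1{h_\xi}\partial_\xi\phi\Big)+\partial_s\big(h_\xi\,\partial_s\phi\big)\Big]=1,\qquad h_\xi=1+\kappa(\xi)s,
\]
expanding $1/h_\xi$ as above, and matching powers of $s$: the $s^0$ balance yields $a_2$ (a polynomial in $\alpha$ and $\kappa$) and the $s^1$ balance yields $a_3$ (a polynomial in $\alpha$, $\kappa$ and $\alpha''$).

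Plugging this expansion into \eqref{gradient.curvilinear} expresses $a=(\nabla\phi)^T$ in the moving frame $\{\vT(\xi),\vN(\xi)\}$ as a power series in $s$ up to an $O(s^3)$ remainder, and $\nabla\rho_\eps$ is computed directly from \eqref{multivalued.gauge.function} using $\partial_\xi\rho_\eps=-\alpha(\xi)+\tfrac12\alpha''(\xi)s^2+\eps^2\omega_\eps$, $\partial_s\rho_\eps=\alpha'(\xi)s$ together with the same expansion of $1/h_\xi$. Adding $a+\nabla\rho_\eps$ one sees the point of the particular form of $\rho_\eps$: the term $-\int_0^\xi\alpha$ is tailored so that $\partial_\xi\big(-\int_0^\xi\alpha\big)=-\alpha$ cancels the $O(1)$ tangential part of $a$ (which at $s=0$ is a unit multiple of $\alpha\,\vT$, since $\nabla\phi|_{s=0}=\alpha\,\vN$); the term $\tfrac12\alpha'(\xi)s^2$ plays a double role — its $\partial_s$ cancels the leading $O(s)$ normal part of $a$ (originating from the tangential part $\alpha'(\xi)s$ of $\nabla\phi$), while its $\partial_\xi$ produces the $\tfrac12\alpha''(\xi)s^2$ which, combined with the $\kappa\alpha$-corrections generated by expanding $1/h_\xi$ and with the $s^2$-coefficient $3a_3$ coming from $a$, collapses to the pure curvature term $\propto \tfrac\kappa2 s^2\,\vT$. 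After these cancellations the only surviving contributions are the tangential term $\propto(s-\tfrac\kappa2 s^2)\vT$, the normal term $\propto(3\alpha'\kappa+\alpha\kappa')s^2\,\vN$, an $O(s^3)$ Taylor remainder coming from the domain regularity, and an $O(\eps^2)$ term arising solely because $\partial_\xi\rho_\eps$ carries the constant $\eps^2\omega_\eps$ (whose Cartesian gradient is $O(\eps^2)$). This is precisely \eqref{local.magnetic.potential} on $\T\times[0,\delta_1]$.

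For the last assertion, note that $\alpha,\alpha',\alpha'',\kappa,\kappa'$ and $\vT,\vN$ are $1$-periodic functions of $\xi$, so the formulas just obtained show that $\partial_\xi\rho_\eps$, $\partial_s\rho_\eps$, hence the Cartesian field $\nabla\rho_\eps$, are $1$-periodic in $\xi$; on the other hand $\rho_\eps$ itself is not, because both $-\int_0^\xi\alpha$ and the linear term $\eps^2\omega_\eps\xi$ grow linearly in $\xi$, the term $\tfrac12\alpha'(\xi)s^2$ being periodic. The phase increment of $e^{i\eps^{-2}\rho_\eps}$ over one period in $\xi$ is $\eps^{-2}\big(\rho_\eps(\xi+1,s)-\rho_\eps(\xi,s)\big)$, which equals $-\eps^{-2}\int_0^1\alpha(y)\,\d y$ plus the increment of the linear term; by the divergence theorem applied to \eqref{magnetic.potential}, $\int_0^1\alpha(y)\,\d y=-\int_\Omega\Delta\phi\,\d x=|\Omega|$, and the constant $\omega_\eps$ of \eqref{quantities.corollary} is calibrated precisely so that this increment lies in $2\pi\Z$; hence $e^{i\eps^{-2}\rho_\eps}$ is a well-defined, $1$-periodic change of gauge on $\T\times[0,\delta_1]$.

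The main obstacle is the bookkeeping in the middle step: one must carry $\phi$ to third order in $s$, keep both its $\xi$- and $s$-derivatives at the correct orders, solve the $s^0$ and $s^1$ balances for $a_2,a_3$ without error, and then check that, after expanding every factor $1/h_\xi$, all terms except those displayed in \eqref{local.magnetic.potential} cancel — while making sure that the regularity $\partial\Omega\in C^4$ is genuinely enough to keep the leftover Taylor remainder of order $O(s^3)$. Nothing here is conceptually deep, but it is the point where a $\kappa\alpha$-term or a sign is most easily mislaid.
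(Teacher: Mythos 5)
Your proposal follows essentially the same route as the paper: expand $\phi$ to third order in $s$ near $\partial\Omega$, determine the coefficients from the curvilinear form of $-\Delta\phi=1$ (the paper phrases this as an application of the Implicit Function Theorem but it amounts to the same power matching you describe), substitute into $a=(\nabla\phi)^T$ via \eqref{gradient.curvilinear}, cancel against $\nabla\rho_\eps$, and verify periodicity of the gauge through $\int_0^1\alpha=|\Omega|$ and the calibration of $\omega_\eps$. The only caveat is a sign subtlety in the linear coefficient of the $s$-expansion of $\phi$ (since $\partial_s=-\partial_n$ in the inward coordinates, $\partial_n\phi=\alpha$ gives linear coefficient $-\alpha$, not $+\alpha$); the paper itself is not fully consistent on this point between Proposition \ref{t.localization} and the present lemma, so this does not detract from the correctness of your approach.
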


\bigskip

\begin{lem}\label{l.local.hamiltonian}
Let  $\delta_1$ and $\rho$ be as in Lemma \ref{l.local.magnetic.potential}. Let $\xi^* \in \T$ be fixed and let us consider the  rescaled coordinates $(\mu, \theta)$ introduced in \eqref{rescaling.local}. Then, the Hamiltonian 
$$
\tilde H_\eps:= - (\nabla + i\eps^{-2}(a + \nabla \rho_\eps)) \cdot (\nabla + i\eps{-2}(a + \nabla \rho_\eps))
$$
may be written  in the set $\Omega_{\delta_1} \simeq \{(\theta, \mu)  \in \T_{\frac 1 \eps} \times [0, 2\delta_1] \}$ as 
$$
\tilde H^\eps =  \eps^{-2} ( H_0 + \eps H_{1,\eps} + \eps^2 H_{2,\eps})
$$ 
with
\begin{align*}
&H_0 = -\partial_\mu^2 -(\partial_\theta + i\mu)^2,\\
&H_{1,\eps}= - \kappa(\xi^* + \eps \theta) \partial_\mu + 2 \kappa(\xi^* +\eps\theta) \mu \partial_\theta^2 + i 2 \kappa(\xi^*+\eps \theta) \mu^2 \partial_\theta \\
&\quad\quad\quad\quad - i \bigl(3\alpha'(\xi^* +\eps\theta) \kappa(\xi^* +\eps\theta) +\alpha(\xi^* +\eps\theta)\kappa'(\xi^* +\eps\theta) \bigr)\bigl(\mu + \mu^2 \partial_\mu \bigr) - \kappa (\xi^* +\eps\theta) \mu^3,
\end{align*}
and $H_{2,\eps}: H^2(\Omega_{\delta_1})\cap H^1_0(\Omega_{\delta_1}) \to L^2(\Omega_{\delta_1})$ satisfying for every $\rho \in H^2(\Omega_{\delta_1})\cap H^1_0(\Omega_{\delta_1})$
\begin{align}\label{boundedness.H.2}
 \|H_{2,\eps} \rho \|_{L^2(\Omega_{\delta_1})}& \leq C(\Omega) \biggl(\|(1 +\mu) (\partial_\theta + i \mu)^2 \rho \|_{L^2(\Omega_{\delta_1})}+ \|(1 +\mu)^3 (\partial_\theta + i \mu) \rho \|_{L^2(\Omega_{\delta_1})}\notag \\
 &\quad \quad \quad\quad\quad\quad+ \|(1 + \mu)^3\partial_\mu \rho\|_{L^2(\Omega_{\delta_1})}+ \|(1 + \mu)^4 \rho \|_{L^2(\Omega_{\delta_1})} + \eps^2\|(1 + \mu)^6 \rho \|_{L^2(\Omega_{\delta_1})}\biggr).
\end{align}
\end{lem}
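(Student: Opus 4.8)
The plan is to compute $\tilde H_\eps$ directly in the curvilinear coordinates $(\xi,s)$ of \eqref{local.coordinates} and then rewrite it in the rescaled variables \eqref{rescaling.local}. I would start from the pointwise identity $(\nabla+iA)\cdot(\nabla+iA)g = \Delta g + 2iA\cdot\nabla g + i(\div A)g - |A|^2 g$, applied with $A := \eps^{-2}(a+\nabla\rho_\eps)$, where $\Delta$, $\nabla$ and $\div$ are understood in the coordinates $(\xi,s)$. For these coordinates the Lam\'e coefficients are $h_\xi = 1+\kappa(\xi)s$, $h_s=1$ (see \eqref{lame.coeff}), so that $\nabla g$ is given by \eqref{gradient.curvilinear}, $\Delta g = \frac{1}{1+\kappa s}\bigl[\partial_\xi\bigl(\frac{1}{1+\kappa s}\partial_\xi g\bigr)+\partial_s\bigl((1+\kappa s)\partial_s g\bigr)\bigr]$, and $\div(v_T\vT+v_N\vN)=\frac{1}{1+\kappa s}\bigl[\partial_\xi v_T+\partial_s\bigl((1+\kappa s)v_N\bigr)\bigr]$. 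The one delicate point is the term $i(\div A)g$: expanding it naively would require differentiating the error in \eqref{local.magnetic.potential}, which is only controlled in $C^0$. This is circumvented by recalling that $a$ is, by \eqref{magnetic.potential}, a rotated gradient, so $\div a\equiv 0$; hence $\div(a+\nabla\rho_\eps)=\Delta\rho_\eps$, and $\Delta\rho_\eps$ is computed exactly and explicitly from the formula \eqref{multivalued.gauge.function} (again in curvilinear coordinates). The remaining terms $A\cdot\nabla g$ and $|A|^2 g$ only involve multiplying $g$ and its first derivatives by the components of $A$ in the moving frame, so the pointwise expansion of $a+\nabla\rho_\eps$ furnished by \eqref{local.magnetic.potential} is exactly the input needed.

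Next I would substitute $\xi=\xi^*+\eps\theta$, $s=\eps\mu$, so that $\partial_\xi=\eps^{-1}\partial_\theta$, $\partial_s=\eps^{-1}\partial_\mu$ and $1+\kappa s = 1+\eps\,\kappa(\xi^*+\eps\theta)\,\mu$, and Taylor-expand each prefactor $(1+\eps\kappa\mu)^{-j}$ in powers of $\eps\mu$, keeping $\kappa,\kappa',\alpha,\alpha'$ evaluated at $\xi^*+\eps\theta$. Collecting by powers of $\eps$: the leading $\eps^{-2}$ contribution must reassemble into $\eps^{-2}H_0=\eps^{-2}(-\partial_\mu^2-(\partial_\theta+i\mu)^2)$; this uses that the $s$-independent part of the tangential component of $a$ near $\partial\Omega$ --- which is present because $\phi(\xi,s)=\alpha(\xi)s+O(s^2)$, so $a$ has an $O(1)$ tangential component at $s=0$ --- is exactly cancelled by the term $-\int_0^\xi\alpha$ in $\rho_\eps$, so that the effective tangential potential is $s-\tfrac{\kappa}{2}s^2+\cdots$ with no constant term, and then $2iA\cdot\nabla$ combines with $\Delta$ into the covariant square $(\partial_\theta+i\mu)^2$ rather than an unbalanced $\partial_\theta^2+2i\mu\partial_\theta$. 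The $\eps^{-1}$ contribution is then identified with $\eps^{-1}H_{1,\eps}$: the $\Delta$-part contributes $2\kappa\mu\partial_\theta^2-\kappa\partial_\mu$, the term $-2iA\cdot\nabla$ contributes the remaining pieces in $\partial_\theta$ and in $\mu^2\partial_\mu$, the term $-i(\div A)$ contributes $-i(3\alpha'\kappa+\alpha\kappa')\mu$ once one has checked that $\Delta\rho_\eps=(3\alpha'\kappa+\alpha\kappa')s+O(s^2)$, and $|A|^2$ contributes $-\kappa\mu^3$; matching these against the stated $H_{1,\eps}$ closes the identification. The extra term $\eps^2\omega_\eps\xi$ in $\rho_\eps$ produces, after rescaling, a pure-gauge order-$\eps$ contribution of covariant form $-2i\omega_\eps(\partial_\theta+i\mu)$, which is absorbed by the accompanying factor $e^{i\omega_\eps\xi}$ carried along in the ansatz of Theorems \ref{t.main.easy}--\ref{t.main} (equivalently, put into $H_{2,\eps}$).

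Everything of order $\eps^0$ and smaller that is left after subtracting $\eps^{-2}H_0+\eps^{-1}H_{1,\eps}$ is declared to be $H_{2,\eps}$. To obtain the bound \eqref{boundedness.H.2} I would not compute $H_{2,\eps}$ in closed form but observe that each of its summands is (i) a coefficient that is a product of $\kappa,\kappa',\alpha,\alpha'$ (and $\omega_\eps$) evaluated at $\xi^*+\eps\theta$, hence bounded uniformly in $\eps$ and $\theta$ by $C(\Omega)$, times (ii) a monomial $\mu^j$, times (iii) a derivative of $g$ of order at most two in $(\theta,\mu)$; together with a genuinely $O(\eps^2)$-small remainder, coming from the tails of the geometric expansions and of the error in \eqref{local.magnetic.potential}, that carries higher powers of $\mu$. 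Rewriting the bare $\theta$-derivatives $\partial_\theta$ and $\partial_\theta^2$ in terms of $(\partial_\theta+i\mu)$ and $(\partial_\theta+i\mu)^2$ --- each substitution costing one extra power of $\mu$ --- and bounding each resulting term by the corresponding weighted $L^2$-norm yields \eqref{boundedness.H.2} with the stated powers of $(1+\mu)$, the term $\eps^2\|(1+\mu)^6\rho\|$ collecting exactly the $O(\eps^2)$-small remainder.

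I expect the main obstacle to be the bookkeeping in the middle step: verifying that the many contributions of $\Delta$, $2iA\cdot\nabla$, $i(\div A)$ and $|A|^2$ at orders $\eps^{-2}$ and $\eps^{-1}$ reassemble exactly into the stated $H_0$ and $H_{1,\eps}$ --- in particular that all bare $\partial_\theta$-derivatives organise into the covariant combination $\partial_\theta+i\mu$ --- which rests entirely on the precise choice of gauge \eqref{magnetic.potential}, \eqref{multivalued.gauge.function} and on the leading Taylor coefficients of $\phi$ near $\partial\Omega$ encoded in Lemma \ref{l.local.magnetic.potential}. The estimate for $H_{2,\eps}$ is, by contrast, a routine, if lengthy, matter of tracking powers of $\mu$.
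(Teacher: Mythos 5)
Your overall route is the same one the paper sketches: expand $-(\nabla+i\eps^{-2}(a+\nabla\rho_\eps))\cdot(\nabla+i\eps^{-2}(a+\nabla\rho_\eps))$ in the curvilinear coordinates $(\xi,s)$ via \eqref{lame.coeff}--\eqref{gradient.curvilinear}, substitute the pointwise approximation of $a+\nabla\rho_\eps$ furnished by \eqref{local.magnetic.potential}, rescale $(\xi,s)\mapsto(\xi^*+\eps\theta,\eps\mu)$, collect powers of $\eps$ to read off $H_0$ and $H_{1,\eps}$, and declare the residual to be $\eps^2 H_{2,\eps}$. Your remark that $\div a\equiv 0$ (so $\div(a+\nabla\rho_\eps)=\Delta\rho_\eps$ can be computed exactly from \eqref{multivalued.gauge.function}) is a clean way around the fact that \eqref{local.magnetic.potential} only controls $a+\nabla\rho_\eps$ itself, not its derivatives; the paper's terse proof glosses over this point (one could equally well use the explicit Taylor coefficients with controlled remainder from the proof of Lemma \ref{l.local.magnetic.potential}, but your route is tidier).

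The one place where your argument is not airtight is the $\omega_\eps$ term, and you do not quite close the gap. As you note, the piece $\eps^2\omega_\eps\xi$ of $\rho_\eps$ contributes a tangential component of size $O(\eps^2)$ to $a+\nabla\rho_\eps$, hence a term $-2i\omega_\eps\eps^{-1}(\partial_\theta+i\mu)$ in $\tilde H_\eps$. This sits at the $\eps^{-1}H_{1}$ level of the decomposition $\tilde H_\eps=\eps^{-2}(H_0+\eps H_{1,\eps}+\eps^2 H_{2,\eps})$, not at the $\eps^2 H_{2,\eps}$ level. Your alternative of ``putting it into $H_{2,\eps}$'' is incompatible with the bound \eqref{boundedness.H.2}: that estimate would then force $\eps^{-1}\omega_\eps\lesssim 1$, which fails since $\omega_\eps\in[0,1)$ need not be $O(\eps)$. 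The absorption by $e^{i\omega_\eps\xi}$ is a statement about the eigenfunction ansatz in Theorems \ref{t.main.easy}--\ref{t.main}, not about the operator identity that this lemma asserts, so it does not make the decomposition self-contained. To be fair, the paper's own statement of $H_{1,\eps}$ omits the $-2i\omega_\eps(\partial_\theta+i\mu)$ piece as well, and its proof sketch is too brief to reveal where the authors intend it to go; your proposal, which faithfully tracks that sketch, inherits the same imprecision. A fully rigorous version should either add $-2i\omega_\eps(\partial_\theta+i\mu)$ to $H_{1,\eps}$ or carry the $\omega_\eps$ shift in the reference eigenfunction from the outset.
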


\begin{proof}[Proof of Lemma \ref{l.local.magnetic.potential}]
By rewriting the equation \eqref{magnetic.potential} for in the local coordinates $(\xi, s)$ (see also \eqref{lame.coeff}-\eqref{gradient.curvilinear}),
the Implicit Function Theorem implies that there exists $\delta_1 \leq 2\delta$ such that for all $s \leq \delta_1$ and $\xi \in T$, it holds
\begin{align*}
\phi(\xi, s) = -\bigl(\alpha(\xi)s + \beta(\xi)s^2 + \gamma(\xi)s^3\bigr) + \epsilon(\xi,s),
\end{align*}
with the function $\alpha$ defined as in \eqref{functions.curvature} and the remaining coefficients $\beta, \gamma$ satisfying for a constant $C < +\infty$
\begin{align}\label{coefficients.phi}
 \begin{cases}
 \beta = \frac{1}{2}(1 - \kappa\alpha)\\
 \gamma = \frac{1}{6}(2\kappa^2 \alpha - \alpha''-\kappa)\\
  |\epsilon(\xi, s)| \leq C s^4.
 \end{cases}
\end{align}
 Note that since $\partial\Omega$ is assumed to be $C^4$, all the above quantities are well-defined. Furthermore, by \eqref{magnetic.potential}, the vector field $ a$ may be written in the local coordinates $(\xi, s)$ as
\begin{align*}
a(\xi, s) = \bigl(\alpha's - (\kappa\alpha' - \beta')s^2 \bigr)\vN(\xi) - \bigl(\alpha + 2\beta s + 3\gamma s^2\bigr)\vT(\xi) + V(\xi, s), \ \ \ |V(\xi, s)|\leq Cs^3. 
\end{align*}
By using the definition of $\rho_\eps$ in \eqref{multivalued.gauge.function}, \eqref{gradient.curvilinear} and the relations \eqref{coefficients.phi}, it is an easy computation to show that $a + \nabla \phi$ satisfies \eqref{local.magnetic.potential}. 

\bigskip

To conclude the proof of the lemma, it remains to show that the definition of $C_\eps$ in \eqref{multivalued.gauge.function} implies that for every $s \in [0, \delta_1]$  and $m\in \Z$ we have
\begin{align}\label{periodicity}
e^{i \eps^{-2}\rho_\eps(s, m)} = e^{i \eps^{-2}\rho_\eps(s, 0)}.
\end{align}
Since $\alpha$ is periodic and the function $\int_0^\xi \alpha(x) \, \d x + \eps^2\omega_\eps \xi$ is linear in the variable $\xi$, it suffices to prove that
$$
\eps^{-2}\int_0^1 \alpha(x) \d x + \omega_\eps \in 2\pi \Z.
$$
This, in turn, immediately follows by the definition \eqref{quantities.corollary} of $\omega_\eps$ and the fact that by definition \eqref{functions.curvature} of $\alpha$,
the divergence theorem and the equation in \eqref{magnetic.potential} it holds
$$
\int_0^1 \alpha(x) \d x = - \int_{\Omega} \Delta \phi = |\Omega|.
$$
\end{proof}

\begin{proof}[Proof of Lemma \ref{l.local.hamiltonian}] Since the proof of this lemma follows by the results of Lemma \ref{l.local.magnetic.potential} and standard computations for change of coordinates in differential operators, we skip its proof and give below only the main steps:

\begin{itemize}
 \item We rewrite the gradient $\nabla$ in curvilinear coordinates as in \eqref{gradient.curvilinear} and use
 \eqref{local.magnetic.potential} for $a + \nabla\rho$.

 \item We then change coordinates according to the rescaling \eqref{rescaling.local} so that $(\partial_\xi, \partial_s) \mapsto \eps^{-1}(\partial_\theta, \partial_\mu)$ and compare the terms obtained at each order of $\eps$. The orders $1$ and $\eps$ correspond to  $H_0$ and $H_{1,\eps}$.  
 
 \item To conclude the proof of the lemma it remains to show that the remainder $\eps^2 H_{2,\eps}:= \eps^{2}\tilde H_\eps- (H_o + \eps H_1)$ satisfies estimate
\eqref{boundedness.H.2}. This easily follows thanks to the assumptions on the regularity and compactness of $\partial\Omega$ that allow to uniformly bound $\alpha, \kappa$ and their derivatives up to the fourth order.
\end{itemize}
\end{proof}

\subsection{Proof of Theorem \ref{t.main.easy}}
We begin this section by showing that by Proposition \ref{t.localization} we may assume that $\Psi_\eps$ is supported in a neighbourhood of $\partial \Omega$ of order 1. This, in particular,  allows us to work in the local coordinates $(\xi, s)$ of \eqref{local.coordinates} without facing problems of well-definiteness. 

\smallskip

Let $\delta_1>0$ be as in Lemma \ref{l.local.magnetic.potential} and Lemma \ref{l.local.hamiltonian}. Let $\eta$ be a smooth cut-off function for $\Omega_{\frac{\delta_1}{2}}$ in $\Omega_{\delta_1}$. Then, the function $\eta\Psi_\eps$ satisfies
\begin{align}\label{cut.problem}
\begin{cases}
H_\eps( \eta\Psi_\eps) = \lambda_\eps \eta\Psi_\eps + (\nabla + i\eps^{-2}a)\cdot \bigl( \nabla\eta \Psi_\eps\bigr) \ \ \ \ &\text{in $\Omega$}\\
\eta\Psi_\eps = 0 \ \ \ \ \ &\text{ on $\partial\Omega$}.
\end{cases}
\end{align}
By Theorem \ref{localization}, we may bound for every $n\in \N$
\begin{align}
 \|(1-\eta)\Psi_\eps\|_{L^2(\Omega)} +  \|(\nabla + i \eps^{-2}a)\cdot \bigl(\nabla\eta \Psi_\eps\bigr) \|_{L^2(\Omega)} \lesssim C(n) \eps^{n},
 \end{align}
which implies that the modified function $\eta\Psi_\eps$ is  close in $L^2(\Omega)$ to the original function $\Psi_\eps$  and satisfies \eqref{spectral.intro} up to an error $\eps^n$, with  $n\in \N$ arbitrary. Throughout the proof of the theorem, we may thus assume that $\Psi_\eps$ is supported in $\Omega_{\delta_1}$. We may also perform the change of gauge $\eta\Psi_\eps \mapsto e^{i\eps^{-2}\rho_\eps}\eta\Psi_\eps$, with $\rho_\eps$ introduced in Lemma \ref{l.local.magnetic.potential}. This new function thus satisfies \eqref{cut.problem} with the magnetic potential $A_\eps$ substituted by $\eps^{-2}(a + \nabla \rho)$ and an error term $R_\eps$ which may be again bounded by an arbitrary power of $\eps$. As long as no ambiguity occurs, throughout this section we keep the same notation $\Psi_\eps$ for the previous approximated eigenfunction $e^{i\eps^{-2}\rho_\eps}\eta\Psi_\eps$. In view of this, we also redefine
the function $\Psi_{\text{flat},\eps}$ as
$$
\Psi_{\text{flat},\eps}:= e^{i (- k_{1} \frac \xi \eps + B_1(k_1) \int_0^\xi \kappa(y) \d y + \rho) }H_j(k_{1,\eps}, \mu), \ \ \rho \in [0, 2\pi)
$$
so that the proof of Theorem  \ref{t.main.easy} reduces to showing \eqref{closeness.to.flat.theorem.easy}  with these new definitions of $\Psi_\eps$ and $\Psi_{\text{flat}, \eps}$.

\bigskip

For $\eps > 0$ we define
\begin{align}\label{def.m.eps}
m_\eps:= \max_{\xi \in \T}\biggl( \frac{1}{2}\int_{ d(\tilde \xi; \xi)< \eps} \int_{\R_+} | \Psi_\eps(\tilde\xi, s)|^2 \d\tilde \xi \, \d s\biggr)^{\frac 1 2}.
\end{align}\bigskip

The next two propositions give a more quantitative information on the convergence of the eigenfunctions. As becomes apparent in the statement
of the next result, since we did not assume any quantitative information on the convergence of the eigenvalues $\eps^2\lambda_\eps$ to $\lambda$, the price to pay to quantify the convergence of the eigenfunctions is a more implicit definition of their first-order approximation (which we call $\tilde \Psi_{\text{flat},\eps}$ in comparison with $\Psi_{\text{flat},\eps}$).

\smallskip

As explained in Subsection \ref{subsection.main.ideas}, Proposition \ref{p.main} states that for each $\xi^* \in \T$, corresponding to a point of the boundary $\partial \Omega$, the function $\Psi_\eps$ at scales $\eps$ around such point may be approximated in a weighted $L^2$-norm by a multiple of the eigenfunction for the half-plane. Moreover, the error of the approximation around a point $\bar \xi$ of the boundary grows at most linearly  with the distance $d(\bar \xi; \xi^*)$. Similarly, we may find a higher-order approximation for the behaviour of $\Psi_\eps$ by multiplying the eigenfunction of the half plane by a linear function in the angular variable. In this case, the error grows quadratically with the distance from the point $\xi^*$.

\begin{prop}\label{p.main}
Let $\eps_0$ and $k_{1,\eps}$ be as in \eqref{roots.branches}. For each $ \xi^* \in \T$, there exist $A_{\eps}(\xi^*) \in \C$  with $|A_\eps(\xi^*)| \leq 1$ such that for every $\bar \xi \in \T$ with $d(\xi^*+ \bar \xi; \bar \xi) < \frac 1 4 $
\begin{align}\label{first.order.error}
\biggl(\fint_{d(\xi; \bar \xi) < \eps} \int (1+ \frac s \eps)^{-6} |\frac{\sqrt \eps}{m_\eps} \Psi_\eps(\xi^* + \xi, s) - \frac{1}{\sqrt \eps} A_{\eps}(\xi^*) H_1(k_{1,\eps}, \frac s \eps) e^{-i k_{1,\eps} \frac{\xi}{\eps}}|^2 \d\xi \, \d s \biggr)^{\frac 1 2}\lesssim d(\xi^*+ \bar \xi; \xi^*) + \eps
\end{align}
and
\begin{align}\label{second.order.error}
\biggl(\fint_{d(\xi; \bar \xi) < \eps} \int &(1+ \frac s \eps)^{-12} |\frac{\sqrt \eps}{m_\eps} \Psi_\eps(\xi^* + \xi, s)\notag  \\
& \quad\quad -\frac{1}{\sqrt \eps} A_{\eps}(\xi^*)( 1 + i B_l(k_{\eps,1}) \kappa(\xi^*) \xi) H_1(k_{1,\eps}, \frac s \eps) e^{-i k_{1,\eps}\frac \xi \eps}|^2 \d\xi \, \d s \biggr)^{\frac 1 2}\lesssim d(\xi^*+ \bar \xi;  \xi^*)^2 + \eps.
\end{align}

\end{prop}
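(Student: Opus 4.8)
The plan is to pass to blow-up coordinates centred at $\xi^*$, recognise the rescaled eigenfunction as a solution of the half-plane problem for $H_0$, and then exploit the rigidity statements Lemma~\ref{l.identification} and the growth estimate Proposition~\ref{proposition.growth}. Throughout set $u_\eps(\theta,\mu):=e^{-ik_{1,\eps}\theta}H_1(k_{1,\eps},\mu)$, so that by \eqref{roots.branches} one has $H_0u_\eps=\eps^2\lambda_\eps u_\eps$ and $u_\eps|_{\mu=0}=0$.

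\textbf{Step 1 (rescaling, equation, compactness).} Fix $\xi^*$ and put $\tilde\Psi_\eps(\theta,\mu):=\tfrac{\eps}{m_\eps}\Psi_\eps(\xi^*+\eps\theta,\eps\mu)$ on $\T_{\frac1\eps}\times\R_+$. By the definition \eqref{def.m.eps} of $m_\eps$, $\tilde\Psi_\eps$ has $L^2$-mass $\leq\sqrt2$ on every window $\{|\theta-\theta_0|<1\}\times\R_+$, hence is bounded in $L^2_{loc}$ and grows at most like $\sqrt{|\theta|}$; moreover the bounds of Proposition~\ref{t.localization}, once rescaled, control the weighted norms $\|(1+\mu)^n\tilde\Psi_\eps\|$ and the rescaled first and second derivatives on every such window, uniformly in $\theta_0$. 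By Lemma~\ref{l.local.hamiltonian} (in the gauge of Lemma~\ref{l.local.magnetic.potential}, and after the cut-off of Step~1 of the proof of Theorem~\ref{t.main.easy}, whose effect is $O(\eps^n)$) the function $\tilde\Psi_\eps$ solves \[(H_0-\eps^2\lambda_\eps)\tilde\Psi_\eps=-\eps H_{1,\eps}\tilde\Psi_\eps-\eps^2H_{2,\eps}\tilde\Psi_\eps+O(\eps^n)=:\eps\,G_\eps,\] and the localization bounds together with \eqref{boundedness.H.2} show that $G_\eps$ is bounded, in the $(1+\mu)^{-6}$-weighted $L^2$-norm, on every unit window uniformly in $\theta_0$. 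Along a subsequence $\tilde\Psi_\eps\rightharpoonup\Psi_0$ in $L^2_{loc}$ with $(H_0-\lambda)\Psi_0=0$, $\Psi_0|_{\mu=0}=0$ and $\Psi_0$ of sublinear growth; since $N=1$ and $\nu_1'(k_1)\ne0$ (no secular resonance), Lemma~\ref{l.identification} forces $\Psi_0=c\,e^{-ik_1\theta}H_1(k_1,\mu)$ for some $c\in\C$. Define $A_\eps(\xi^*)$ as the coefficient of the $L^2$-orthogonal projection of $\tilde\Psi_\eps$ onto $u_\eps$ over $\{|\theta|<1\}$; then $|A_\eps(\xi^*)|\le1$ by Cauchy--Schwarz and $A_\eps(\xi^*)\to c$.

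\textbf{Step 2 (first-order estimate \eqref{first.order.error}).} The profile $A_\eps(\xi^*)u_\eps$ lies in the kernel of $H_0-\eps^2\lambda_\eps$, so $E_\eps:=\tilde\Psi_\eps-A_\eps(\xi^*)u_\eps$ still solves $(H_0-\eps^2\lambda_\eps)E_\eps=\eps\,G_\eps$, with right-hand side of size $\eps$. Applying Proposition~\ref{proposition.growth} with $m=0$ (using that $E_\eps$ is a priori of sublinear growth and that its projection onto $u_\eps$ over the unit window vanishes) gives $\|E_\eps\|_{L^2_{(1+\mu)^{-6}}(\{|\theta-\theta_0|<1\}\times\R_+)}\lesssim\eps\,(1+|\theta_0|)$: the error grows at most linearly in $\theta$ with $\eps$-small slope and is $O(\eps)$ near the origin. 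Undoing the rescaling with $\theta_0=\bar\xi/\eps$, and using $\tfrac{\sqrt\eps}{m_\eps}\Psi_\eps(\xi^*+\xi,s)-\tfrac1{\sqrt\eps}A_\eps(\xi^*)H_1(k_{1,\eps},s/\eps)e^{-ik_{1,\eps}\xi/\eps}=\eps^{-1/2}E_\eps(\xi/\eps,s/\eps)$, converts this into \eqref{first.order.error}; the weight $(1+s/\eps)^{-6}$ is precisely what absorbs the $\mu$-growth of the localization bounds. Note that, because we compare with $u_\eps$ (built on the exact root $k_{1,\eps}$) rather than with $e^{-ik_1\theta}H_1(k_1,\cdot)$, no rate on $\eps^2\lambda_\eps\to\lambda$ is needed: the profile is an honest solution of $(H_0-\eps^2\lambda_\eps)(\cdot)=0$, which is why the error is genuinely $O(\eps)$ near $\bar\xi=\xi^*$ and not merely $o(1)$.

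\textbf{Step 3 (second-order estimate \eqref{second.order.error}).} Now refine. From Step~2, $\tilde\Psi_\eps=A_\eps(\xi^*)u_\eps+(\text{degree }\le1,\ \text{size }\eps)$, hence $G_\eps=A_\eps(\xi^*)\,H_{1,0}u_\eps+(\text{degree }\le1,\ \text{size }\eps)$, where $H_{1,0}$ is $H_{1,\eps}$ with coefficients frozen at $\xi^*$, and its resonant component (the projection onto $u_\eps$ in $\mu$) is $A_\eps(\xi^*)\langle H_{1,0}u_\eps,u_\eps\rangle_{L^2(\R_+)}u_\eps+O(\eps)$. A commutator computation gives $(H_0-\eps^2\lambda_\eps)(\theta u_\eps)=-2(\partial_\theta+i\mu)u_\eps$, whose $u_\eps$-component is, by \eqref{formula.derivative.nu}, a nonzero multiple of $\nu_1'(k_{1,\eps})$; and a short calculation of $\langle H_{1,0}u_\eps,u_\eps\rangle_{L^2(\R_+)}$ --- in which the two terms carrying $3\alpha'\kappa+\alpha\kappa'$ cancel and one is left with $-\kappa(\xi^*)\int_0^{+\infty}\mu\big((\mu+k_{1,\eps})^2+k_{1,\eps}^2\big)|H_1(k_{1,\eps},\mu)|^2\,d\mu$ --- shows that $B_1(k_{1,\eps})$ of \eqref{quantities.corollary} is exactly the constant for which adding the correction $iB_1(k_{1,\eps})\kappa(\xi^*)\,\xi\cdot A_\eps(\xi^*)u_\eps$ to the approximant cancels the resonant part of the right-hand side up to order $\eps^2$. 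Writing $R^{(2)}_\eps:=\tilde\Psi_\eps-A_\eps(\xi^*)\big(1+iB_1(k_{1,\eps})\kappa(\xi^*)\eps\theta\big)u_\eps$, one gets $(H_0-\eps^2\lambda_\eps)R^{(2)}_\eps=\eps\,\tilde G_\eps$ with $\tilde G_\eps$ of size $\eps$ on each unit window whose resonant part is of size $\eps$ and degree $\le1$ in $\theta$ while its non-resonant part is a bounded (degree $0$) term of size $\eps$ plus a degree-$1$ term of size $\eps$. Proposition~\ref{proposition.growth} applied once more (with $m=1$) then gives $\|R^{(2)}_\eps\|_{L^2_{(1+\mu)^{-12}}(\{|\theta-\theta_0|<1\}\times\R_+)}\lesssim\eps^2\theta_0^2+\eps^2|\theta_0|+\eps$, which with $\theta_0=\bar\xi/\eps$ and $|\bar\xi|<\tfrac14$ reads $\lesssim d(\xi^*+\bar\xi;\xi^*)^2+\eps$; this is \eqref{second.order.error}, and \eqref{first.order.error} is recovered sharply from $\tilde\Psi_\eps-A_\eps(\xi^*)u_\eps=iB_1(k_{1,\eps})\kappa(\xi^*)\eps\theta\,A_\eps(\xi^*)u_\eps+R^{(2)}_\eps$.

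\textbf{Main obstacle.} The heart of the matter is the reliance in Steps~2--3 on Proposition~\ref{proposition.growth}: the operator $H_0-\eps^2\lambda_\eps$ is (nearly) singular at the tangential frequency $k_{1,\eps}$, so upgrading the crude qualitative picture (compactness plus Lemma~\ref{l.identification}) to \emph{quantitative}, $\eps$- and $\xi^*$-uniform polynomial-growth bounds requires the Fourier/resolvent analysis isolating that frequency and checking that the projection defining $A_\eps(\xi^*)$ removes exactly the dangerous component. The second non-routine point is the algebraic identity $\langle H_{1,0}u_\eps,u_\eps\rangle_{L^2(\R_+)}=-\kappa(\xi^*)\nu_1'(k_{1,\eps})B_1(k_{1,\eps})$, i.e.\ the cancellation of the curvature--$\alpha$ terms, which is what forces the drift constant in \eqref{ODE.amplitude} to be $B_1$. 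Propagating the $\mu$-weights through $H_{1,\eps}$ and $H_{2,\eps}$ via \eqref{boundedness.H.2} and the bookkeeping of degrees is technical but mechanical.
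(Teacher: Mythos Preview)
Your overall strategy matches the paper's: blow up, identify the limit via Lemma~\ref{l.identification}, define $A_\eps(\xi^*)$ by projection onto $u_\eps$ over the unit window, then control the error through Proposition~\ref{proposition.growth}. The gap is in Step~2, where you claim that applying Proposition~\ref{proposition.growth} with $m=0$ to $E_\eps$ directly yields $\|E_\eps\|\lesssim\eps(1+|\theta_0|)$. That proposition does not give this: its conclusion \eqref{growth.psi} bounds $\iii{(1+\mu)^{-n}\Psi_\eps}_{m+1}$ by the source norm \emph{plus the local mass of $\Psi_\eps$ near $\theta=0$}. So to extract the factor $\eps$ you would already need $\fint_{|\theta|<2}\int(1+\mu)^{-6}|E_\eps|^2\lesssim\eps^2$. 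Step~1 only gives $E_\eps\to 0$ qualitatively along subsequences, with no rate, and the orthogonality $\langle E_\eps,u_\eps\rangle_{\{|\theta|<1\}}=0$ is an average over a finite window, not a Fourier condition, so it does not by itself force $E_\eps$ small at the origin.

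The paper closes this by a compactness--contradiction step you have omitted: set $\beta_\eps:=\bigl(\int_{|\theta|<1}\int(1+\mu)^{-6}|E_\eps|^2\bigr)^{1/2}$, normalize $\Psi_{1,\eps}:=\eta_\eps E_\eps/(\beta_\eps\vee\eps)$, and apply Proposition~\ref{proposition.growth} to obtain $\iii{(1+\mu)^{-3}\Psi_{1,\eps}}_1\lesssim 1$ uniformly. If $\eps/\beta_\eps\to 0$ along a subsequence, the right-hand side of the equation for $\Psi_{1,\eps}$ vanishes in the limit, so $\Psi_1:=\lim\Psi_{1,\eps}$ solves the homogeneous problem, has at most linear growth, and inherits the orthogonality to $u$; Lemma~\ref{l.identification} then forces $\Psi_1=b\,e^{-ik_1\theta}H_1(k_1,\mu)$ with $b=0$, contradicting $\int_{|\theta|<1}\int(1+\mu)^{-6}|\Psi_1|^2=1$ (obtained from the strong local convergence). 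Hence $\beta_\eps\lesssim\eps$, and only then does \eqref{first.order.error} follow. The same missing bootstrap recurs in your Step~3 when passing to the quadratic bound. A smaller technical point: Proposition~\ref{proposition.growth} assumes compact support in $\theta$, so one must first multiply by a cut-off $\eta_\eps$ on scale $1/\eps$ and show the commutator $[H_0,\eta_\eps]\tilde\Psi_\eps$ is harmless; the paper does this explicitly.
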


\bigskip

\begin{lem}\label{p.main.2} Let $\eps_0$ and $k_{1,\eps}$ be as in the previous lemma. For every $\eps < \eps_0$ there exists  a function $F_\eps\in C^0( \R; \C)$ such that
\begin{align*}
\tilde \Psi_{\text{flat},\eps}(\xi, s):= F_\eps(\xi) e^{-i k_{1,\eps} \frac \xi \eps}H_1(k_{1,\eps}, \frac s \eps),
\end{align*}
satisfies
\begin{align}\label{closeness.to.flat.improved.easy}
\sup_{\xi^* \in \T}\bigl(\fint_{d(\xi;\xi^*)< \eps} \int_{0}^{+\infty}|\frac{\sqrt \eps}{m_\eps}\Psi_\eps(\xi, s) -
\frac{1}{\sqrt{\eps}}\tilde \Psi_{\text{flat}, \eps}(\xi, s)|^2 \d\xi \d s \bigr)^{\frac 1 2} \lesssim \sqrt\eps.
\end{align}
Furthermore, for every $\eps_j \to 0$, there exists a subsequence and $\rho \in [0, 2\pi)$ such that 
\begin{align}\label{closeness.to.amplitude}
F_\eps \to e^{i B_1(k_1) \int_0^\xi \kappa(x) \d x + \rho} \ \ \ \ \text{uniformly on  $\T$.}
\end{align}
\end{lem}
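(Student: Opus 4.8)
The plan is to post-process Proposition~\ref{p.main}. For every $\xi^*\in\T$ fix a value $A_\eps(\xi^*)\in\C$ with $|A_\eps(\xi^*)|\le1$ for which \eqref{first.order.error}--\eqref{second.order.error} hold, set $G_\eps(\xi):=A_\eps(\xi)e^{ik_{1,\eps}\xi/\eps}$, and let $F_\eps$ be a mollification of $G_\eps$ at scale $\eps$. The core step is to extract from Proposition~\ref{p.main} the ``discrete amplitude equation''
\begin{align}\label{plan.discrete}
\bigl|G_\eps(\xi+h)-G_\eps(\xi)\bigl(1+iB_1(k_1)\kappa(\xi)\,h\bigr)\bigr|\lesssim h^2+\eps\qquad\text{for all }d(\xi+h;\xi)<\tfrac14 ,
\end{align}
uniformly in $\xi$; in particular $|G_\eps(\xi+h)-G_\eps(\xi)|\lesssim|h|+\eps$. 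To prove \eqref{plan.discrete} I would apply \eqref{second.order.error} at base point $\xi$ (offset region around $h$) and \eqref{first.order.error} at base point $\xi+h$ (offset region around $0$): both estimates bound $\tfrac{\sqrt\eps}{m_\eps}\Psi_\eps$, in the same $(1+\tfrac{s}{\eps})^{-12}$-weighted average over the common window of length $\sim\eps$ around $\xi+h$, by a scalar multiple (constant, resp. affine in the tangential offset $\zeta$) of $H_1(k_{1,\eps},\tfrac s\eps)e^{-ik_{1,\eps}\zeta/\eps}$, with errors $\lesssim h^2+\eps$ and $\lesssim\eps$. Subtracting the two approximations, the common factor $H_1(k_{1,\eps},\tfrac s\eps)e^{-ik_{1,\eps}\zeta/\eps}$ pulls out (its modulus does not depend on $\zeta$, and its weighted $L^2$-mass on the window is a fixed positive multiple of $\eps$), and rewriting $A_\eps(\xi)e^{ik_{1,\eps}\xi/\eps}=G_\eps(\xi)$, $A_\eps(\xi+h)e^{ik_{1,\eps}(\xi+h)/\eps}=G_\eps(\xi+h)$, together with ``offset $=h+O(\eps)$ on the window'' (so the affine coefficient equals $1+iB_1(k_1)\kappa(\xi)h$ there, its $O(\eps)$-variation contributing only at order $\eps^2$ after squaring and averaging against $|H_1|^2$), yields \eqref{plan.discrete}. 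Then $G_\eps$ is Lipschitz up to an additive $\eps$, so its scale-$\eps$ mollification $F_\eps\in C^\infty(\R;\C)$ satisfies $\|F_\eps'\|_{L^\infty}\lesssim1$, $|F_\eps|\le1$, $|F_\eps-G_\eps|\lesssim\eps$, and inherits \eqref{plan.discrete} up to an extra additive $O(\eps)$.

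To prove \eqref{closeness.to.flat.improved.easy} I would fix $\xi^*$ and use \eqref{first.order.error} with $\bar\xi=0$: on $d(\xi;\xi^*)<\eps$ the function $\tfrac{\sqrt\eps}{m_\eps}\Psi_\eps$ lies within $\lesssim\eps$, in the $(1+\tfrac s\eps)^{-6}$-weighted average, of $\tfrac1{\sqrt\eps}G_\eps(\xi^*)e^{-ik_{1,\eps}\xi/\eps}H_1(k_{1,\eps},\tfrac s\eps)$; since $|F_\eps(\xi)-G_\eps(\xi^*)|\lesssim\eps$ on this window (by \eqref{plan.discrete} and $|F_\eps-G_\eps|\lesssim\eps$), this target differs from $\tfrac1{\sqrt\eps}\tilde\Psi_{\text{flat},\eps}(\xi,s)$ by $\tfrac{O(\eps)}{\sqrt\eps}H_1(k_{1,\eps},\tfrac s\eps)$, again of weighted window-average $\lesssim\eps$. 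Thus the weighted version of \eqref{closeness.to.flat.improved.easy} holds with bound $\lesssim\eps$. To pass to the unweighted norm I would split the $s$-integral at $s=R\eps$: on $\{s<R\eps\}$ the weight costs a factor $R^{6}$; the tail $\{s>R\eps\}$ of $\tfrac{\sqrt\eps}{m_\eps}\Psi_\eps$ is controlled by Proposition~\ref{t.localization} (which gives $\int_\Omega d_{\partial\Omega}^{2n}|\Psi_\eps|^2\lesssim\eps^{2n}$) together with the easy lower bound $m_\eps^2\gtrsim\eps$ (from $\|\Psi_\eps\|_{L^2}\simeq1$ and covering $\partial\Omega$ by $\sim\eps^{-1}$ windows of length $\eps$), while the tail of $\tilde\Psi_{\text{flat},\eps}$ is controlled by the decay of $H_1$ from Lemma~\ref{l.basic.flat}. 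Choosing $R=\eps^{-1/6}$ and $n=6$ makes the total $\lesssim\eps$, so \eqref{closeness.to.flat.improved.easy} follows with the stated $\sqrt\eps$; all constants are uniform in $\xi^*$.

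For \eqref{closeness.to.amplitude}, the family $\{F_\eps\}$ is uniformly bounded and uniformly Lipschitz, so by Arzel\`a--Ascoli some subsequence converges uniformly to a Lipschitz $F_0$ with $|F_0|\le1$. Letting $\eps\to0$ in \eqref{plan.discrete} (recall $k_{1,\eps}\to k_1$ and $B_1$ is continuous) gives $F_0(\xi+h)=F_0(\xi)(1+iB_1(k_1)\kappa(\xi)h)+O(h^2)$ uniformly, hence $F_0\in C^1$ with $F_0'=iB_1(k_1)\kappa F_0$, so $F_0(\xi)=F_0(0)\exp\bigl(iB_1(k_1)\int_0^\xi\kappa(x)\,\d x\bigr)$ and $|F_0|$ is constant ($\kappa,B_1$ being real). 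To show $|F_0|\equiv1$, let $\xi^*_\eps$ attain the maximum in \eqref{def.m.eps}, so that $\fint_{d(\xi;\xi^*_\eps)<\eps}\int_{\R_+}|\tfrac{\sqrt\eps}{m_\eps}\Psi_\eps|^2=1$; by \eqref{closeness.to.flat.improved.easy} and the triangle inequality $\fint_{d(\xi;\xi^*_\eps)<\eps}\int_{\R_+}|\tfrac1{\sqrt\eps}\tilde\Psi_{\text{flat},\eps}|^2\ge(1-C\sqrt\eps)^2$, and since $\int_{\R_+}|H_1(k_{1,\eps},\tfrac s\eps)|^2\,\d s=\eps$ the left-hand side equals $\fint_{d(\xi;\xi^*_\eps)<\eps}|F_\eps(\xi)|^2\,\d\xi\le\max_{d(\xi;\xi^*_\eps)<\eps}|F_\eps|^2$; with the Lipschitz bound on the window this forces $|F_\eps(\xi^*_\eps)|\ge1-C\sqrt\eps$. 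Along the convergent subsequence $\xi^*_\eps\to\xi_0$ and $F_\eps\to F_0$ uniformly, so $|F_0(\xi_0)|=1$ and hence, by constancy, $|F_0|\equiv1$; thus $F_0(0)=e^{i\rho}$ for some $\rho\in[0,2\pi)$ and $F_0=e^{i(B_1(k_1)\int_0^\xi\kappa(x)\,\d x+\rho)}$, which is \eqref{closeness.to.amplitude}.

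The hard part will be \eqref{plan.discrete}: comparing the two approximations furnished by Proposition~\ref{p.main} on a genuinely common window while correctly reconciling their local phase origins (this is exactly the purpose of the twist $e^{ik_{1,\eps}\xi/\eps}$ in the definition of $G_\eps$), and carrying out the regularization without degrading the $O(\eps)$-precision. The weight removal in \eqref{closeness.to.flat.improved.easy} is routine but must be balanced against $m_\eps^2\gtrsim\eps$, and the final identification of the unit-modulus limit $F_0$ with $e^{i(B_1(k_1)\int_0^\xi\kappa+\rho)}$ --- constancy of $|F_0|$ along the amplitude equation, pinned at the single point where $|F_\eps|\to1$, namely the maximizer of $m_\eps$ --- is the conceptual crux.
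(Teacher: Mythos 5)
Your proposal is correct and follows essentially the same strategy as the paper: extract a slowly-varying amplitude from Proposition~\ref{p.main}, derive a discrete amplitude equation (your \eqref{plan.discrete}; the paper's \eqref{eps.differential}), prove the weighted version of \eqref{closeness.to.flat.improved.easy} and remove the weight via Proposition~\ref{t.localization}, pass to a uniform limit by Ascoli--Arzel\`a, identify the resulting ODE $F'=iB_1(k_1)\kappa F$, and pin $|F_0|\equiv 1$ at the maximizer of $m_\eps$. The one point where you diverge is in the construction of $F_\eps$: you take an arbitrary selection $A_\eps(\xi^*)$ from Proposition~\ref{p.main}, form $G_\eps(\xi)=A_\eps(\xi)e^{ik_{1,\eps}\xi/\eps}$, and mollify at scale $\eps$; the paper instead defines $F_\eps$ directly as a windowed projection, $F_\eps(\xi)=\fint_{d(\eps\theta;\xi)<\eps}\int_0^{+\infty}\tilde\Psi_\eps(0;\theta,\mu)e^{ik_{1,\eps}\theta}H_1(k_{1,\eps},\mu)\,\d\mu\,\d\theta$, which is continuous by construction and coincides with your $G_\eps$ in view of the phase identity \eqref{change.phase}. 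Your mollification is arguably the cleaner move for invoking Ascoli--Arzel\`a (the paper handles this with an ``equicontinuity up to a vanishing additive constant'' and a footnote, which is exactly what the mollification makes rigorous), while the paper's explicit integral representation sidesteps the measurability of an arbitrary selection $\xi^*\mapsto A_\eps(\xi^*)$, a point you leave implicit; either version of the argument goes through.
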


We begin by showing how to prove Theorem \ref{t.main.easy} from the above statements.
\begin{proof}[Proof of Theorem \ref{t.main.easy}]
We use \eqref{closeness.to.flat.improved.easy} to argue that 
\begin{align}\label{uniform.norm}
\frac{m_\eps}{\sqrt\eps} \to 1.
\end{align}
On the one hand, the normalization assumption $\|\Psi_\eps\|_{L^2}=1$ implies that
\begin{align}\label{first.lower.bound}
 m_\eps \geq \sqrt \eps + o(1).
\end{align}
On the other hand, by the triangle inequality and the normalization
of the functions $H_1(k_{\eps,1}, \cdot)$ (see line after \eqref{eigenvalue.oscillator}) in the rescaled variable $\mu= \frac s \eps$, we may use   \eqref{closeness.to.flat.improved.easy} to bound for any $\xi^* \in \T$
\begin{align}\label{hom.norm}
\bigl(\int_{d(\xi ; \xi^*)< \eps} \int |\Psi_\eps|^2 \bigr)^{\frac 1 2} 
\geq  m_\eps \biggl( \bigl( \fint_{d(\xi ; \xi^*)< \eps} |F_{\eps}(\xi)|^2 \d\xi  \bigr)^{\frac 1 2} - C{\sqrt \eps}\biggr).
\end{align}
Using again the normalization assumption  $\|\Psi_\eps\|_{L^2}=1$, we  get that
\begin{align*}
\frac{m_\eps}{\sqrt \eps}  \leq \biggl( \bigl( \fint_{d(\xi ;\xi^*)< \eps} |F_{\eps}(\xi)|^2 \d\xi  \bigr)^{\frac 1 2} - C{\sqrt \eps}\biggr)^{-1} + o(1).
\end{align*}
Since by \eqref{closeness.to.amplitude}, we have  that $|F_{\eps}| \to 1$ uniformly, the term  on the right-hand side converges to 1. This, together with \eqref{first.lower.bound}, implies \eqref{uniform.norm}. We remark that \eqref{hom.norm} also implies that 
\begin{align}
\sup_{\xi^* \in \T}\biggl( \fint_{d(\xi ;\xi^*) < \eps} \int_{\R_+} | \Psi_\eps(\xi, s)|^2 \d\xi \, \d s\biggr)^{\frac 1 2} \to 1,
\end{align}
i.e. for $\eps$ small the $L^2$-norm of the eigenfunction at scale $\eps$ is distributed almost uniformly along the boundary.

\bigskip

Equipped with \eqref{uniform.norm}, we turn to the main estimate of Theorem \ref{t.main.easy}: By combining \eqref{closeness.to.flat.improved.easy}  with \eqref{uniform.norm} and \eqref{hom.norm}, we have that
\begin{align}\label{t.main.1}
\bigl(\fint_{d(\xi ;\xi^*)< \eps} \int  |\Psi_\eps(\xi, s) - \frac{1}{\sqrt{\eps}}\tilde\Psi_{\text{flat}, \eps}(\xi, s)|^2 \d\xi \, \d s \bigr)^{\frac 1 2} \to 0.
\end{align}
Hence, by the triangle inequality we conclude the proof of Theorem \ref{t.main.easy} provided we argue that also
\begin{align}
\bigl(\fint_{d(\xi ;\xi^*)< \eps} \int |\Psi_{\text{flat},\eps}(\xi, \eps\mu) - \tilde \Psi_{\text{flat}, \eps}(\xi, \eps \mu)|^2 \d\xi \, \d \mu \bigr)^{\frac 1 2} \to 0,
\end{align}
where we selected any sequence $\eps_j \downarrow 0$ such that $F_{\eps}$ converges as in \eqref{closeness.to.amplitude} to $F$ with a fixed phase $\rho$.  This limit easily follows by combining the triangle inequality with \eqref{closeness.to.amplitude} of Proposition \ref{p.main}.
\end{proof}

Before giving the argument for Proposition \ref{p.main} we introduce the following notation: For $m \in \N$ fixed, we define the norm  $\iii{\cdot}_{m}$ acting on any function $g: \R \times \R_+ \to \C$ as
\begin{align}\label{norm.3.bars.easy}
\iii{g}_{m}:= \sup_{\Theta \in \R}( 1+ |{\Theta}|)^{-m}\bigl(\fint_{|\theta - \Theta | < 1} \int_{\R_+} |g(\theta, \mu)|^2 \d\theta \, \d\mu \bigr)^{\frac 1 2}.
\end{align}

\begin{proof}[Proof of Lemma \ref{p.main.2}] \, We start by appealing to Proposition \ref{p.main} to argue that there exists $F_\eps \in C^0(\T; \C)$ such that for all $\xi^* \in \T$
\begin{align}\label{p.main.2.1}
\biggl(\fint_{d(\xi ;\xi^*) < \eps} \int_0^{+\infty} (1+ (\frac s \eps)^6)^{-2} |\frac{\sqrt\eps}{m_\eps} \Psi_\eps(\xi , \eps \mu) -\frac{1}{\sqrt \eps} F_{\eps}(\xi) H_1(k_{1,\eps}, \frac s \eps) e^{-i k_{1,\eps}\frac{\xi}{\eps}}|^2 \biggr)^{\frac 1 2} \lesssim  \eps.
\end{align}

\bigskip

Let $ \tilde \Psi_\eps(\xi^*,\mu, \theta)$ be as in \eqref{rescaled.zero.order} in the proof of Proposition \ref{p.main}. For  $\xi, \xi^* \in \T$, let us define
 \begin{align*}
f_{ \eps}(\xi, \xi^*) := \fint_{d(\eps \theta ; \xi) < \eps} \int_0^{+\infty} \tilde \Psi_\eps(\xi^*,\mu, \theta) e^{i k_{1,\eps} \theta} H_1(k_{1,\eps}, \mu) \d \mu \d\theta.
\end{align*} 
By the assumption $\|H_1(k_{1,\eps}, \cdot) \|_{L^2(\R_+)}=1$,  Cauchy-Schwarz's inequality and \eqref{first.order.error} we have that for all $\xi^*, \xi \in \T$ with $d(\xi^*+ \bar \xi; \bar \xi)< \frac 1 4$
$$
| f_{\eps}(\xi, \xi^*) - A_{\eps}(\xi^*)| \leq \bigl(  \int_0^{+\infty} (1+ \mu^6)^{2}|H_1(k_{1,\eps}, \mu)|^2 \d \mu  \bigr)^{\frac 1 2}( d(\xi + \xi^* ;\xi^*)+ \eps ).
$$
Since $k_{1,\eps} \to k_1$, the decay of  $H_1(k_{1,\eps}, \cdot)$ at infinity (see Lemma \ref{l.basic.flat})  implies that
\begin{equation}
\begin{aligned}\label{distance.A.1}
| f_{\eps}(\xi, \xi^*) - A_{\eps}(\xi^*)| \lesssim d(\xi + \xi^* ;\xi^*)+ \eps.
\end{aligned}
\end{equation}
Similarly, this time thanks to \eqref{second.order.error}, we also have that
\begin{equation*}
\begin{aligned}
| f_{\eps}(\xi, \xi^*) - A_{\eps}(\xi^*) ( 1+ i \xi B_{1}(k_{1,\eps}) \kappa(\xi_*) )| \lesssim  d(\xi + \xi^* ;\xi^*)^2 + \eps.
\end{aligned}
\end{equation*}
These yield, by using \eqref{distance.A.1} with $\xi =0$, that
\begin{equation}
\begin{aligned}\label{eps.differential.1}
&|f_\eps(\xi, \xi^*) - f_\eps(0, \xi^*)| \lesssim d(\xi + \xi^* ;\xi) + \eps,\\
&| f_{\eps}(\xi, \xi*) - f_{\eps}(0, \xi^*) ( 1+ i \xi B_{1}(k_{1,\eps}) \kappa(\xi_*) )| \lesssim d(\xi + \xi^* ;\xi^*)^2 + \eps .
\end{aligned}
\end{equation}
Since by a simple change of variables we rewrite
\begin{align}\label{change.phase}
f_{\eps}(\xi, \xi^*) = e^{-i \frac{\xi^*}{\eps} k_{1,\eps}} f_{l,\eps}(\xi + \xi^*, 0).
\end{align}
This and \eqref{eps.differential.1} yield that the function 
$$
F_{\eps}(\cdot):= f_{\eps}( \xi, 0) = \fint_{d(\eps\theta; \xi) < \eps} \int_0^{+\infty} \tilde \Psi_\eps(0 ;\mu, \theta) e^{i k_{1,\eps} \theta} H_1(k_{1,\eps}, \mu) \d \mu \d\theta.
$$ 
satisfies
\begin{align}\label{eps.differential}
| F_{\eps}(\xi) - F_{\eps}(\xi^*)| \lesssim d(\xi; \xi^*) + \eps, \ \ \ \ | F_{\eps}(\xi) - F_{\eps}(\xi^*) ( 1+ i \xi B_{1}(k_{1,\eps}) \kappa(\xi_*) )| \lesssim d(\xi; \xi^*)^2 + \eps
\end{align}
 for every $\xi, \xi^* \in \T$ with $d(\xi^*; \bar \xi)< \frac 1 4$. By \eqref{second.order.error} with $\xi^*\in \T$ and $\bar \xi =0$ and again \eqref{distance.A.1} with $\xi = 0$ we also get
\begin{align*}
\biggl(\fint_{d(\eps\theta; 0) < \eps} \int_0^{+\infty} (1+ \mu^6)^{-2} |\tilde \Psi_\eps(\xi^*, \theta,\mu) - f_{\eps}(0, \xi^*)H_1(k_{1,\eps}, \mu)e^{-i k_{1,\eps}\theta}|^2 \biggr)^{\frac 1 2} \lesssim \eps.
\end{align*}
Since, by the definition of $F_\eps$ above we may write $f_{\eps}(0, \xi^*)=  e^{-i \frac{\xi^*}{\eps} k_{1,\eps} }F_{\eps}(\xi^*)$, we obtain
\begin{align*}
\biggl(\fint_{d(\eps\theta; 0) < \eps} \int_0^{+\infty}  (1+ \mu^6)^{-2} |\tilde \Psi_\eps(\xi^*, \theta,\mu) - F_{\eps}(\xi^*) H_1(k_{1,\eps}, \mu) e^{-i k_{1,\eps}\frac{(\xi^* + \eps \theta)}{\eps}}|^2 \biggr)^{\frac 1 2} \lesssim  \eps.
\end{align*}
By \eqref{eps.differential} also
\begin{align*}
\biggl(\fint_{d(\eps\theta; 0) < \eps} \int_0^{+\infty}  (1+ \mu^6)^{-2} |\tilde \Psi_\eps(\xi^*, \theta,\mu) -F_{\eps}(\xi^*+ \eps \theta) H_1(k_{1,\eps}, \mu) e^{-i k_{1,\eps}\frac{(\xi^* + \eps \theta)}{\eps}}|^2 \biggr)^{\frac 1 2} \lesssim  \eps,
\end{align*}
so that if we use the definition \eqref{rescaled.zero.order} of $\tilde\Psi_\eps$ and change variables $\xi = \xi^* + \eps \theta$ we reduce to \eqref{p.main.2.1}.

\bigskip

Equipped with \eqref{p.main.2.1}, we argue how to upgrade this estimate to  \eqref{closeness.to.flat.improved.easy}: Let any $\frac {23}{24} \leq \alpha < 1$ fixed. Then
\begin{align}\label{closeness.to.flat.improved.1}
\bigl(\fint_{d(\xi ; \xi^*) < \eps} \int_{0}^{\eps^{\alpha}} |\frac{\sqrt \eps}{m_\eps}\Psi_\eps(\xi, s) -
\frac{1}{\sqrt{\eps}}\tilde \Psi_{\text{flat}, \eps}(\xi, s)|^2 \d\xi \d s \bigr)^{\frac 1 2}\stackrel{ \eqref{p.main.2.1}}{\lesssim} \sqrt\eps.
\end{align}

\bigskip

To conclude \eqref{closeness.to.flat.improved.easy} it remains to prove that also
\begin{align}\label{closeness.to.flat.improved.external}
\bigl(\fint_{d(\xi ; \xi^*) < \eps} \int_{\eps^{\alpha}}^{+\infty}(1 + (\frac{s}{\eps})^6)^{-2} |\frac{\sqrt \eps}{m_\eps}\Psi_\eps(\xi, s) -
\frac{1}{\sqrt{\eps}}\tilde \Psi_{\text{flat}, \eps}(\xi, s)|^2 \d\xi \d s \bigr)^{\frac 1 2} \lesssim \sqrt\eps.
\end{align}
Since by definition of $m_\eps$, it holds $m_\eps \geq \sqrt \eps$ (see \eqref{first.lower.bound}), Proposition \ref{t.localization} allow us to bound for every $n \in \N$
\begin{align}
 \bigl(\fint_{d(\xi ; \xi^*) < \eps} \int_{\eps^{\alpha}}^{+\infty} |\frac{\sqrt \eps}{m_\eps}\Psi_\eps(\xi, s)|^2 \d\xi \d s \bigr)^{\frac 1 2} \leq \eps^{-\alpha n-\frac 1 2} \|d_{\partial\Omega}^n \Psi_\eps\|_{L^2}\lesssim C(n) \eps^{(1-\alpha)n - \frac 1 2}.
\end{align}
Similarly, for all $n\in \N$
\begin{align*}
 \bigl(\fint_{d(\xi ; \xi^*) < \eps} \int_{\eps^{\alpha}}^{+\infty} |\frac{1}{\sqrt \eps} \Psi_{\text{flat}, \eps}(\xi, s)|^2 \d\xi \d s\bigr)^{\frac 1 2}&= \bigl(\fint_{|\xi-\xi^*|< \eps} \int_{\mu \geq \eps^{\alpha - 1}} |\Psi_{\text{flat}, \eps}(\xi, \eps \mu)|^2 \d\xi \d \mu \bigr)^{\frac 1 2} \\
 &\lesssim  \biggl( \int_{\eps^{\alpha-1}}^{+\infty} |H_1(k_1, \mu)|^2\d \mu \biggr)^{\frac 1 2} \stackrel{\text{Lemma \ref{l.basic.flat}}}{\lesssim} \eps^{(1-\alpha)n}.
\end{align*}
Since $\alpha < 1$, we may choose $n$ big enough in the previous inequality so that these imply \eqref{closeness.to.flat.improved.external} by the triangle inequality. It remains to combine \eqref{closeness.to.flat.improved.external} with \eqref{closeness.to.flat.improved.1}  to establish \eqref{closeness.to.flat.improved.easy} of Lemma \ref{p.main.2}.

\bigskip

To conclude the proof of this lemma, it remains to show that $F_\eps$ constructed above converges uniformly to the function $F$ defined in \eqref{closeness.to.amplitude}. Let us fix $\{\xi_l\}_{l=1}^{9} \subset \T$ such that the sets $I_l= \{ \xi \in \T \, \colon \, d( \xi_j; \xi) < \frac 1 8\}$ with $j=1, \cdots, 9$ provide a covering for $\T$. Let $j=1$. By the first inequality in 
\eqref{eps.differential}, we infer that $|F_\eps(\xi)| \lesssim 1$ for every $\xi \in \T$  and it is equicontinuous (up to $\eps$) on $I_1$. We may thus apply Ascoli-Arzel\'a's theorem \footnote{Inequality  \eqref{eps.differential} provides an equicontinuity estimate only up to a constant vanishing when $\eps \to 0$. This allows nonetheless for Ascoli-Arzel\'a's theorem to hold.} and, up to subsequences, infer that $F_\eps$ converges uniformly in $I_1$ to a function $F$. In addition, by passing to the limit $\eps \to 0$ in the second inequality of \eqref{eps.differential} and using that $k_{1,\eps } \to k_1$, we obtain for every $\xi,\xi^* \in \T$
$$
| F(\xi) - F(\xi^*) ( 1+ i \xi B_{1}(k_{1}) \kappa(\xi_*) )| \lesssim |\xi- \xi^*|^2.
$$
In other words, $F$ solves  the ODE
\begin{align*}
F' = i B_1(k_1) \kappa  F \ \ \ \ \text{in $I_1$},
\end{align*}
i.e.  for some $C \in \C$ and all $\xi \in I_1$
\begin{align}\label{def.limit.F}
F(\xi)=C \exp\bigl(i B_1(k_1) \int_0^\xi \kappa(x) \d x \bigr)
\end{align}
Since the same argument holds for all the other sets $I_l$, $l=2, \cdots, 9$ and they provide a covering for $\T$, we may infer that $F$ above is the uniform limit of $F_\eps$ for every $\xi \in \T$.
To conclude the proof of \eqref{closeness.to.amplitude} it remains to argue that 
\begin{align}\label{complex.sphere}
|C|=1.
\end{align}

\bigskip

We prove \eqref{complex.sphere} by appealing to  \eqref{closeness.to.flat.improved.easy}: For every $\eps$ we denote by $\xi^*_\eps$ one of the points where $m_\eps$ in \eqref{def.m.eps} is attained. Note that since the boundary $\partial\Omega$ is compact, and therefore the maximum of the $L^2$-norms  are taken over the torus $\T$, such $\xi^*_\eps$ exists. Hence, by \eqref{closeness.to.flat.improved.easy} with $\xi^* = \xi^*_\eps$, the triangle inequality and the definition of $m_\eps$, there exists a constant $C$ (independent from $\xi_\eps^*$) such that
\begin{align*}
1 - C\sqrt\eps \leq \bigl(\fint_{d(\xi;\xi^*_\eps)< \eps} \int_{0}^{+\infty}|\frac{1}{\sqrt{\eps}}\tilde \Psi_{\text{flat}, \eps}(\xi, s)|^2 \d\xi \d s \bigr)^{\frac 1 2} \leq 1 + C\sqrt\eps.
\end{align*}
By the definition of $\tilde \Psi_{\text{flat}, \eps}$ this turns into
\begin{align*}
1 - C\sqrt\eps \leq \bigl(\fint_{d(\xi;\xi^*_\eps)< \eps}|F_\eps(\xi)|^2 \d\xi \bigr)^{\frac 1 2} \leq 1 + C\sqrt\eps.
\end{align*}
Let $\{ \eps_j\}_{j \in \N}$ be any sequence such that $F_{\eps_j} \to F$ uniformly as above. Then, there exists another sequence $\beta_j \to 0$ such that
\begin{align*}
1 - (C\sqrt{\eps_j} + \beta_j) \leq |F(\xi_{\eps_j})| \leq 1 + C\sqrt{\eps_j} + \beta_j.
\end{align*}
Using that thanks to \eqref{def.limit.F} we have $|F(\xi_\eps)|=|F(0)|$, we may send $j \uparrow +\infty$ and conclude \eqref{complex.sphere} for the sequence $\{ \eps_j\}_{j\in \N}$. Since the same argument holds regardless of the converging sequence considered,  we infer \eqref{closeness.to.amplitude}. This concludes the proof of Lemma \ref{p.main.2}.
\end{proof}

\begin{proof}[Proof of Proposition \ref{p.main}] We divide the proof into the following steps:

\bigskip

\noindent {\em Step 1. Blow-up limit for $ \Psi_\eps$.\,}  For each $\eps>0$, let $m_\eps$ be as in \eqref{def.m.eps}. For a fixed point $\xi^* \in \T$, we define the rescaled functions
\begin{align}\label{rescaled.zero.order}
\tilde \Psi_\eps(\xi^*; \eps\theta, \eps\mu):= \frac{\eps}{m_\eps} \Psi(\xi^* + \eps \theta, \eps \mu).
\end{align}
The goal of this step is to show that for every sequence $\eps_j \to 0$  there exists $A=A(\xi^*) \in \C$ with $|A(\xi)| \leq 1$ such that for every $R> 0$
\begin{align}\label{convergence.psi.0}
\tilde\Psi_\eps \to A(\xi^*) H_1(k_1, \mu) e^{-i k_1 \theta}, \ \ \ \ \text{in $L^2( \{ |\theta| < R \} \times \R_+ )$}.
\end{align} 
Here, the value $k_1 \in \R$  is the one defined in \eqref{roots}. Note that this in particular implies that the limit class for $\tilde \Psi_{\eps}(\xi^*; \cdot, \cdot)$  are functions like the one above which only differ by the choice of the constant $A(\xi^*)$. With no loss of generality we give the proof for $\xi^*=0$. In order to keep our notation lean, we drop the argument $\xi^*$ in the notation for $\tilde\Psi_\eps$, $\Psi_0$ and $A$.
 
\bigskip

We begin the proof of this step by observing that by Lemma \ref{l.local.magnetic.potential} and Lemma \ref{l.local.hamiltonian}, the function $\tilde \Psi_\eps$ solves the boundary value problem
\begin{align}\label{rescaled.eigenvalue.problem}
\begin{cases}
(H_0- \eps^2\lambda_\eps) \tilde \Psi_\eps = \eps f_\eps \ \ \ &\text{ in $\T_{\frac 1 \eps} \times \R_+$}\\
\tilde\Psi(\xi, 0) =0 \ \ \ \ &\text{ on $\T_{\frac 1 \eps} \times \{ \mu = 0\}$}
\end{cases}
\end{align}
with 
\begin{align}\label{f.eps.first.step}
f_\eps(\mu, \theta) := H_{1,\eps}\tilde \Psi_\eps + \eps H_{2,\eps}\tilde \Psi_\eps.
\end{align}
By recalling the definition \eqref{norm.3.bars.easy} of the norms $\iii{\cdot }_m$, by construction of $\tilde \Psi_\eps$ and definition \eqref{def.m.eps} we have that the periodic extension of $\tilde \Psi_\eps$ to the half-plane $\R \times \R_+$ satisfies
\begin{align}\label{boundedness}
\iii{\tilde \Psi_\eps(\theta, \mu)}_{0}\leq 1.
\end{align}

\medskip

From \eqref{boundedness} and weak lower semicontinuity, we infer that, up to a subsequence $\eps_j\to 0$, there exists a function $\Psi_0 \in L^2_{loc}( \R \times \R_+)$ that satisfies
\begin{align}\label{boundedness.psi.0}
\iii{ \Psi_0}_{0} \lesssim 1,
\end{align}
and which is the weak limit of $\tilde \Psi_\eps$ in the sense of \eqref{convergence.psi.0}. We prove that
\begin{align}\label{characterization.psi.0}
\Psi_0 (\xi^*; \theta, \mu)= A(\xi^*) H_1(k_1, \mu) e^{-i k_1 \theta}.
\end{align}
 To keep a lean notation, we fix any subsequence  $\{ \eps_j\}_{j \in \N}$ and drop the lower index $j$ in the notation. We first show that  \eqref{boundedness} also implies that
\begin{align}\label{bounded.energy}
 \iii{\partial_\mu\tilde \Psi_\eps }_{0} + \iii{(\partial_\theta + i \mu)\tilde \Psi_\eps }_0 \lesssim 1, 
\end{align}
so that for every $R> 0$
\begin{equation}
\begin{aligned}\label{further.convergence}
(\partial_\theta + i\mu)\tilde\Psi_\eps(\theta, \mu) &\rightharpoonup (\partial_\theta + i\mu)\Psi_0(\theta, \mu), \ \ \ \ \ \partial_\mu\tilde \Psi_\eps(\theta, \mu) \rightharpoonup \partial_\mu\Psi_0(\theta, \mu)
\end{aligned}
\end{equation}
in $L^2( \{ |\theta| < R\} \times \R_+)$.
To show \eqref{further.convergence} we apply  Lemma \ref{l.energy.estimate} with $m=n =1$ to $\tilde\Psi_\eps$: Since this function solves \eqref{rescaled.eigenvalue.problem}, we infer that 
\begin{align}\label{step2.a}
\iii{(\partial_\theta + i\mu)\tilde\Psi_\eps}_{0} + \iii{\partial_\mu\tilde \Psi_\eps}_{0} \leq (\eps^2\lambda_\eps)^{\frac 1 2} \iii{\tilde\Psi_\eps}_{0} + \eps \iii{f_\eps}_{0} \stackrel{\eqref{distance.landau}-\eqref{boundedness}}{\lesssim} 1 +  \eps \iii{f_\eps}_{0}.
\end{align}
We show that also the second term on the right-hand side is bounded: By the inequality of Theorem \eqref{t.localization} rewritten in the local coordinates $(\mu, \theta)$ we get that for each $n\in \N$
\begin{align}\label{localization.rescaled}
\int_{\T_{\frac 1 \eps}} \int_{\R_+} \mu^n  |\tilde\Psi_\eps|^2  \leq C(n) \int_{\T_{\frac 1 \eps}} \int_{\R_+} |\tilde\Psi_\eps|^2 \stackrel{\eqref{boundedness}}{\lesssim}\eps^{-1}.
\end{align}
Similarly, we may rewrite in the rescaled coordinates $(\mu, \theta)$ also the second and third term on the left-hand side of \eqref{localization}.
By appealing to Lemma \ref{l.local.magnetic.potential} for $a+ \nabla \rho_\eps$ we conclude that \eqref{localization.rescaled} also holds if $\tilde \Psi_\eps$ is replaced by the terms $\partial_\mu\tilde\Psi_\eps$, $\partial_\theta\tilde\Psi_\eps, \partial_\mu^2\tilde\Psi_\eps$ and $\partial_\theta^2\tilde\Psi_\eps$. We stress that it is crucial that, as shown before Step 1, we reduced to consider $\Psi_\eps$ supported only on the set $\Omega_{\delta_1}$. Thanks to  the definition \eqref{f.eps.first.step} of $f_\eps$ and Lemma \ref{l.local.hamiltonian}, estimate \eqref{localization.rescaled} and its analogue for the higher derivatives of $\tilde\Psi_\eps$ imply that $\eps \iii{f_\eps} \lesssim 1$. By inserting this into \eqref{step2.a}  we infer \eqref{bounded.energy}.

\bigskip

Equipped with \eqref{boundedness}, \eqref{bounded.energy} and \eqref{further.convergence}, we now turn to prove identity \eqref{characterization.psi.0}: Ppassing to the limit $\eps \to 0$ in \eqref{rescaled.eigenvalue.problem}, the function $\Psi_0$ solves 
\begin{align*}
\begin{cases}
(H_0 -\lambda) \Psi_0 = 0 \ \ \ \ \ &\text{ in $\R \times \R_+$}\\
\Psi_0 =0 \ \ \ \ \ &\text{ on $\R \times \{ \mu = 0\}$}.
\end{cases}
\end{align*}
This, together with \eqref{boundedness.psi.0}  and \eqref{further.convergence}, allows us to apply Lemma \ref{l.identification} with $M=1$ and $n=0$ and obtain \eqref{characterization.psi.0}. We stress that by lower semicontinuity and \eqref{boundedness} it immediately follows that $|A(\xi)| \leq 1$.

\bigskip

We conclude this step by observing that \eqref{convergence.psi.0} and \eqref{bounded.energy} imply also that (up to supsequences) for every $R> 0$
\begin{equation}\label{strong.convergence.L2}
\tilde \Psi_\eps \to \Psi_0 \ \ \ \text{ in $L^2(\{ |\theta| < R \} \times \R_+)$}.
\end{equation}
This may be seen by observing that \eqref{bounded.energy} implies that, if $\eta_R$ is a cut-off for $\{ |\theta| < R\}$ in $\{ |\theta | < 2R\}$, the Fourier transform of $\eta_R \tilde \Psi_\eps$ is uniformly bounded in $H^1(\R \times \R_+)$ and thus admits a strong limit in $L^2(\R \times \R_+)$. This, Plancherel's identity and \eqref{convergence.psi.0} immediately yield \eqref{strong.convergence.L2}.

\bigskip

\noindent{\em Step 2. First-order correction.\,} As in Step 1, we consider the case $\xi^*=0$ and drop the argument $\xi^*$ in the notation for $\tilde\Psi_\eps$, $\Psi_0$ and $A$. As in the previous step, we extend periodically the function $\tilde \Psi_{\eps}$ from $\T_{\frac 1 \eps} \times \R_+$ to the whole half-plane $\R \times \R_+$.

\bigskip

In this step we approximate the function $\tilde\Psi_\eps$ of the previous step to a higher order: Let  $k_{1,\eps} \in \R$  be as in \eqref{roots.branches}. We define
\begin{align}\label{def.psi.0.eps}
\Psi_{0,\eps}:= A_{\eps} H_1(k_{1,\eps}, \mu) e^{-i k_{1,\eps} \theta},
\end{align}
with
\begin{align}\label{def.A.l.eps}
A_{\eps}= \fint_{|\theta|< 1} \int \tilde\Psi_\eps(\theta, \mu) e^{ik_{1,\eps}\theta} H_1(k_{1,\eps}, \mu) \d\mu \d\theta.
\end{align}
Note that by Cauchy-Schwarz, \eqref{boundedness} and the assumption $\int_0^{+\infty}|H_1(k_{1,\eps}, \mu)|^2 \d\mu =1$ , it follows that
$$ 
|A_\eps| \leq 1.
$$
We pick a (smooth) cut-off function $\eta_\eps=\eta_\eps(\theta)\in C^\infty_0(\R)$ of $[-\frac{ 1}{2\eps} ; \frac {1}{2\eps}]$ in $[-\frac{ 1}{ \eps} ; \frac{1}{\eps}]$. We claim that for every sequence $\{\eps_j\}_{j\in \N}$ such that $\tilde\Psi_\eps$ converges as in Step 1 for some $A \in \C$,  then also
\begin{align}\label{right.scaling.error}
 \iii{ \frac{(1 + \mu)^{-3}\eta_{\eps_j} (\tilde\Psi_{\eps_j} - \Psi_{0,\eps_j})}{\eps_j}}_{1} \lesssim 1, \ \ \ \ \eta_{\eps_j} \frac{\tilde\Psi_{\eps_j} - \Psi_{0,\eps_j}}{\eps_j} \rightharpoonup \Psi_1 \ \ \ \ \ \text{in $L^2_{\text{loc}}(\R \times \R_+)$,}
\end{align}
where
\begin{align}\label{first.order.approx}
\Psi_1(\theta, \mu) := A \bigl( i B_1(k_1) \theta H_1(k_1, \mu)  + W_1(\mu) ) e^{-i k_1 \theta}
\end{align}
with $B_1( \cdot)$ as defined in \eqref{quantities.corollary} and
\begin{align*}
\int_0^{+\infty} W_1(\mu) H_1(k_1, \mu) \d \mu = 0, \ \ \ \| W_1\|_{L^2((0, +\infty))} \lesssim 1.
\end{align*}
From now on, when no ambiguity occurs, we skip the index $j$ in the sequence $\eps_j$. We remark that throughout this step, the function $\eta \tilde \Psi_\eps(\theta, \mu)$ is supported only on  $\{ \theta \in \T_{\frac 1 \eps} \, \colon \, d(\eps \theta, 0) < \frac 1 \eps \}$ and therefore may be trivially extended to the whole line $\R$. 

\bigskip

We start the proof of this step by observing that the definitions of $A_\eps$ and $\Psi_{0,\eps}$ immediately imply that
\begin{align}\label{orthogonality}
\fint_{|\theta|< 1} \int (\tilde	\Psi_\eps - \Psi_{0,\eps}) e^{ik_{1,\eps}\theta} H_1(k_{1,\eps}, \mu) \d\mu \d\theta = 0.
\end{align}
We tackle the first estimate in \eqref{right.scaling.error}: Let us define the sequence
\begin{align*}
\beta_\eps:= \biggl(\int_{|\theta|< 1}\int (1 + \mu)^{-6}|\Psi_\eps - \Psi_{0,\eps}|^2 \biggr)^{\frac 1 2}
\end{align*}
and the functions 
$$
\Psi_{1,\eps}:= \eta_\eps \frac{\Psi_\eps - \Psi_{\eps,0}}{\beta_\eps \vee \eps} \in H^1_0(\R \times \R_+).
$$
By construction, it holds that $\Psi_{1,\eps}$ satisfies
\begin{align}\label{norm.psi.1.one}
\int_{|\theta|< 1}\int (1 +\mu)^{-6}|\Psi_{1,\eps}|^2 = 1 
\end{align}
and it solves 
\begin{align}\label{equation.psi.1.eps.cut}
\begin{cases}
(H_0 - \eps^2\lambda_{\eps}) \Psi_{1}^\eps= \frac{\eps}{\beta_\eps\vee \eps}(f_\eps + R_\eps) \ \ \ \ \ &\text{ in $\R \times \R_+$}\\
\Psi_1^\eps =0 \ \ \ \ \ &\text{ on $\R \times \{ \mu=0 \}$}.
\end{cases}
\end{align}
with 
\begin{align*}
f_\eps:= \eta_\eps (H_{1,\eps} \tilde \Psi_\eps + \eps H_{2,\eps}\tilde\Psi_\eps), \ \ \ R_\eps(\theta, \mu):= -2\frac{\eta'}{\eps} \partial_\theta  \tilde \Psi_\eps - \frac{\eta''}{\eps}\tilde \Psi_\eps
\end{align*}
We now argue that
\begin{align}\label{bounded.rhs.psi.1}
\iii{(1+ \mu)^{-3} f_\eps + R_\eps}_{0} \lesssim 1.
\end{align}
If this holds, indeed, we may appeal to Proposition \ref{proposition.growth} and \eqref{norm.psi.1.one} and infer that
\begin{align}\label{sublinearity.psi.beta}
\iii{(1+ \mu)^{-3} \Psi_{1,\eps}}_{1}\lesssim \frac{\eps}{\beta_\eps \vee \eps} + 1 \lesssim 1.
\end{align}

\bigskip

We show \eqref{bounded.rhs.psi.1} by treating separately the terms $\eps H_{2,\eps}\tilde\Psi_\eps$ and $H_{1,\eps}\tilde\Psi_\eps + R_\eps =f_\eps + R_\eps - \eps H_{2,\eps}\tilde\Psi_\eps$: For the first term we may exploit the factor $\eps$ in front of it to argue exactly as was done in Step 1 for the 
the right-hand side of \eqref{step2.a}. We infer from this that
\begin{align*}
 \iii{\eps H_{2,\eps}\tilde \Psi_\eps}_{0} \lesssim 1.
\end{align*}
We tackle the term $H_{1,\eps}\tilde\Psi_\eps + R_\eps$ in a different way: By using the explicit formulation for the operator $H_{1,\eps}$ (c.f. Lemma \ref{l.local.hamiltonian}), and observe that we may bound
\begin{align}
\iii{(1 + \mu)^{-3}(H_{1,\eps}\tilde\Psi_\eps + R_\eps)}_{0}& \lesssim 
\biggl( \|\alpha\|_{C^1(\T)} +  \|\kappa\|_{C^1(\T)} +  \|\frac{\eta'}{\eps}\|_{C^{1}(\T)}\biggr)\\
&\times \biggl(\iii{\tilde\Psi_\eps}_{0} + \iii{(\partial_\theta + i\mu) \tilde\Psi_\eps}_{0} + \iii{\partial_\mu \tilde\Psi_\eps}_{0} + \iii{(\partial_\theta + i\mu)^2 \tilde\Psi_\eps}_{0}\biggr).
\end{align}
By construction we have $\|\frac{\eta'}{\eps}\|_{C^1(\T)} \lesssim 1$; the same holds for the other $C^1$-norms since we assumed that the boundary $\partial\Omega$ is $C^4$. Finally, the
remaining factor on the right hand side above is bounded thanks to \eqref{bounded.energy}. This establishes \eqref{bounded.rhs.psi.1} and, in turn, inequality 
\eqref{sublinearity.psi.beta} for $\Psi_{1,\eps}$.

\bigskip

Equipped with \eqref{sublinearity.psi.beta}, we may argue again as in Step 1 to obtain \eqref{further.convergence} from \eqref{boundedness} and obtain uniform bounds in the norm $\iii{\cdot}_1$ also for $(1 + \mu)^{-3}\partial_\mu \Psi_{1,\eps}$, $(1 + \mu)^{-3}(\partial_\theta + i\mu)\Psi_{1,\eps}$, as well as $(1 + \mu)^{-3}\partial_\mu^2 \Psi_{1,\eps}$ $(1 + \mu)^{-3}(\partial_\theta + i\mu)^2\Psi_{1,\eps}$. By  standard weak compactness these bounds imply that, up to subsequences, there exists a function $\Psi_1 \in H^2_{loc}(\R \times \R_+)$ satisfying
\begin{align}\label{psi.1.sublinear}
\iii{(1+ \mu)^{-3}\Psi_{1}}_{1}\lesssim  1
\end{align}
and such that for every $R> 0$
\begin{align}
(1 + \mu)^{-3}\Psi_{1,\eps} \rightharpoonup (1 + \mu)^{-3} \Psi_1 \ \ \ \text{in $L^2(\{|\theta| < R\} \times \R_+)$.}
\end{align}
Arguing as done in \eqref{strong.convergence.L2}, we may upgrade the previous convergence to 
\begin{align}\label{strong.convergence.psi.1}
\lim_{\eps \downarrow 0}\int_{|\theta| < R} \int (1 + \mu)^{-6}|\Psi_{1,\eps} - \Psi_1|^2 = 0, \ \ \ \ \forall R > 0.
\end{align}
This time, we use the bounds in the norm $\iii{ \cdot }_1$ for both $(1 + \mu)^{-3}\Psi_{1,\eps}$ and $(1 + \mu)^{-3}\partial_\mu \Psi_{1,\eps}$. 
 
\smallskip

To conclude the proof of this step it remains to show that in the definition of $\Psi_{1,\eps}$ we may substitute $\beta_\eps \vee \eps$ with $\eps$, and that  $\Psi_1$ above is as in \eqref{first.order.approx}. We prove the first claim by proving that
\begin{align}\label{contraddiction.assumption}
\liminf_{\eps \downarrow 0}\frac{\eps}{\beta_\eps} > 0.
\end{align}
We argue in favour of this by contradiction: Let us assume indeed that there exists $\{\eps_j\}_{j\in \N}$ we have $\frac{\eps_j}{\beta_{\eps_j}} \to 0$ and therefore that, for $\eps_j$ small enough, $\Psi_{1,\eps_j}= \frac{\Psi_{\eps_j} - \Psi_{\eps_j,0}}{\beta_{\eps_j}}$. Also here, we drop the index $j$ in the notation.  On the one hand, by \eqref{norm.psi.1.one} and \eqref{strong.convergence.psi.1} with $R=1$ we have that
\begin{align}\label{norm.1.psi}
 \int_{|\theta|< 1}\int (1 +\mu)^{-6}|\Psi_{1}|^2 = 1.
\end{align}
On the other hand, since we assumed that $\frac{\eps}{\beta_\eps} \to 0$, we may pass to the limit in the equation 
 \eqref{equation.psi.1.eps.cut} and obtain that $\Psi_1$ solves in the sense of distributions
 \begin{align*}
\begin{cases}
(H_0 - \lambda_0)\Psi_{1}= 0 \ \ \ \ \ &\text{ in $\R \times \R_+$}\\
\Psi_1 =0 \ \ \ \ \ &\text{ on $\R \times \{ \mu =0 \}$}.
\end{cases}
\end{align*}
Using \eqref{psi.1.sublinear}, we may appeal to Lemma \ref{l.identification} with $m=1$ and infer that there exists $b \in \C$
\begin{align*}
\Psi_1(\theta, \mu) = b e^{-ik_1 \theta} H_1(k_1, \mu).
\end{align*}
Furthermore, by \eqref{strong.convergence.psi.1} and \eqref{orthogonality}, we have that 
\begin{align}\label{orthogonality.limit}
b = \int_{|\theta|< 1} \int \Psi_{1}(\theta, \mu) e^{ik_1 \theta} H_1(k_1, \mu) \d\mu \d\theta = 0.
\end{align}
This contradicts identity \eqref{norm.1.psi} for $\Psi_1$ and therefore implies that \eqref{contraddiction.assumption} necessarily holds. The proof of \eqref{contraddiction.assumption} is complete.

\bigskip

Since we established \eqref{contraddiction.assumption}, the bounds and the weak convergence result obtained above for $\Psi_{1,\eps}$, hold
also when $\beta_\eps$ is substituted by $\eps$. These correspond to \eqref{right.scaling.error}. To conclude this step, it remains to show that the weak limit 
$\Psi_1$ is actually the function defined in \eqref{first.order.approx}. The argument is similar to the part above: We pass to the limit in \eqref{equation.psi.1.eps.cut}, this time with $\beta_\eps = \eps$, and use \eqref{convergence.psi.0} so that $\Psi_1$ solves (in the sense of distributions)
\begin{align*}
\begin{cases}
(H_0- \lambda)\Psi_1 = H_1\Psi_0 \ \ \ \ &\text{in $\R \times \{\mu > 0\}$}\\
\Psi_1=0 &\text{on $\R \times \{\mu = 0\}$},
\end{cases}
\end{align*}
with 
\begin{align*}
H_1 \Psi_0 &:= \biggl( - \kappa(0) \partial_\mu + 2 \kappa(0) \mu \partial_\theta^2 + i 2 \kappa(0) \mu^2 \partial_\theta 
- i \bigl(3\alpha'(0) \kappa(0) +\alpha(0)\kappa'(0) \bigr)\bigl(\mu + \mu^2 \partial_\mu \bigr) - \kappa (0) \mu^3\biggr)\Psi_0\\
&\stackrel{\eqref{characterization.psi.0}}{=} -\biggl(\kappa(0)\partial_\mu H_1(k_1 , \mu) + 2 \kappa(0)\mu \bigl(k_1^2 + k_1 \mu + \frac{1}{2}\mu^2\bigr)H_1(k_1,\mu)\\
&\quad\quad + i\bigl(3\alpha'(0) \kappa(0) +\alpha(0)\kappa'(0) \bigr)\bigl(\mu H_1(k_1,\mu) + \mu^2 \partial_\mu H_1(k_1,\mu) \biggr)e^{-ik_1 \theta}.
\end{align*}
We may appeal again to Lemma \ref{l.identification} with $m=1$ and conclude \eqref{first.order.approx}. We remark that, thanks to \eqref{orthogonality.limit}, the limit  $\Psi_1$ does not contain any multiple of $e^{-i k_1\theta}H_1(k_1, \mu)$. The proof of Step 2 is complete.

\bigskip

\noindent {\em Step 3. Second-order error. \,} As in Step 2, we consider the case $\xi^*=0$ and drop the argument $\xi^*$ in the notation for $\tilde\Psi_\eps$, $\Psi_0$ and $A$. As in the previous step, we fix any sequence $\{\eps_j\}_{j\in \N}$ such that $\tilde \Psi_\eps$ converges as in Step 1 but drop the index $j \in \N$ in all the notation. 

\bigskip

We argue similarly to Step 2, this time to compute the second-order error for $\tilde\Psi_\eps$. We thus only
give a sketch of how to prove this step: We define
\begin{align}
 \Psi_1^\eps = A_{\eps} \bigl( i B_1( k_{1,\eps})\kappa(0) \theta H_1(k_{1,\eps}, \mu)  + W_{\eps,1}(\mu) ) e^{i k_{1,\eps} \theta},
\end{align}
i.e. the analogue of $\Psi_1$ above, this time solving 
\begin{align*}
\begin{cases}
(H_0- \eps^2\lambda_\eps)\Psi_{1}^\eps = H_1\Psi_{0,\eps} \ \ \ \ &\text{in $\R \times \{\mu > 0\}$}\\
\Psi_{1}^\eps=0 &\text{on $\R \times \{\mu = 0\}$}.
\end{cases}
\end{align*}
By construction of $\Psi_{1,\eps}$ and by the definition of $\Psi_1^\eps$ it holds
\begin{align*}
 \int_{|\theta|<1}\int (\Psi_{1,\eps} -\Psi_{1}^\eps)  e^{i k_{1,\eps} \theta} H_1(k_{1,\eps}, \mu) = 0.
\end{align*}
Let $\eta_\eps$ the same cut-off function of the previous step.  The difference solves
\begin{align*}
\begin{cases}
(H_0 - \eps^2\lambda_\eps)(\Psi_{1,\eps} -\eta_\eps\Psi_{1}^\eps) = \eps (f_\eps + R_\eps) \ \ \ \ \ &\text{ in $\R \times \R_+$}\\
\tilde \Psi_{\eps,1} =0 \ \ \ \ \ &\text{ on $\R \times \{ \mu = 0\}$}
\end{cases}
\end{align*}
with 
\begin{align*}
f_\eps := \eta_\eps( H_1\Psi_{1,\eps} + \frac{(H_1 - H_{1,\eps})}{\eps}\Psi_{0,\eps} + H_{2,\eps}\Psi_{0,\eps}), \ \ \ \ R_\eps(\theta, \mu):= -2\frac{\eta'}{\eps} \partial_\theta  \tilde \Psi_{1,\eps} - \frac{\eta''}{\eps}\tilde \Psi_{1,\eps}
\end{align*}
As for \eqref{bounded.rhs.psi.1} for the functions $f_\eps$ of Step 2, we may argue that
\begin{align}\label{bounded.rhs.psi.2}
\iii{(1 + \mu)^{-6}( f_\eps  + R_\eps)}_{1} \lesssim 1.
\end{align} 
The argument for the first term in $f_\eps$ and $R_\eps$ is similar to the one in Step 2 with the only difference that we need to consider the norm $\iii{ \cdot }_{1}$ instead of $\iii{ \cdot }_{0}$. The last term may be bounded as well by the same reasoning of Step 2: In this case it suffices to consider the norm $\iii{ \cdot }_{0}$.  The term in the middle may be treated similarly after comparing the definitions of $H_1$ and $H_{1,\eps}$ and using the assumption on the regularity of the domain which
allows us to bound
\begin{align*}
|\kappa(\eps \theta) - \kappa(0)| + |\kappa'(\eps\theta) - \kappa'(0)| + |\alpha(\eps\theta) - \alpha(0)| + |\alpha'(\eps\theta) -\alpha'(0)| \lesssim \eps |\theta|.
\end{align*}
We remark that in this case it is exactly the above estimate which requires  also for this term the use of the norm $\iii{ \cdot }_{1}$ instead of $\iii{\cdot }_{0}$. 

\bigskip

Equipped with \eqref{bounded.rhs.psi.2},  we thus define the quantity 
\begin{align}\label{right.scaling.psi.2}
\beta_\eps := \int_{|\theta|< 1}\int (1+\mu)^{-12}|\Psi_{1,\eps} - \Psi_1|^2 \, \d\mu \, \d\theta.
\end{align}
and argue as in Step 2 and show that also in this case \eqref{contraddiction.assumption} holds.

\bigskip

Hence, the function
\begin{align*}
\tilde \Psi_2^\eps= \frac{\Psi_{1,\eps} - \eta_\eps\Psi_1^\eps}{\eps},
\end{align*}
satisfies
\begin{align*}
 \int_{|\theta|< 1}\int (1+\mu)^{-12}|\tilde \Psi_{2,\eps}|^2 \, \d\mu \, \d\theta \lesssim 1,
\end{align*}
and solves (in the sense of distributions)
\begin{align*}
\begin{cases}
(H_0 - \eps^2\lambda_\eps) \tilde \Psi_{\eps,2}= f_\eps + R_\eps \ \ \ \ \ &\text{ in $\R \times \R_+$}\\
\tilde \Psi_{\eps,2} =0 \ \ \ \ \ &\text{ on $\R \times \{ \mu=0 \}$}.
\end{cases}
\end{align*}
By the same argument of Step 2, with the only difference that now we apply Proposition \ref{proposition.growth} for $m=1$ and $M=1$,
we obtain that
\begin{align}\label{quadratic.growth.psi.2.eps}
\iii{(1 +\mu)^{-6} \tilde \Psi_{2,\eps}}_{2} \lesssim 1. 
\end{align}

\bigskip

\noindent {\em Step 4. Conclusion \,} We now wrap up the previous steps to show \eqref{first.order.error} and \eqref{second.order.error}. For every $\xi^* \in \T$ fixed, we apply Step 1-3 to $\tilde\Psi(\xi^*, \cdot, \cdot)$ defined in \eqref{rescaled.zero.order} of Step 1 and obtain that there exists $A_\eps(\xi^*) \in \C$, $|A_\eps(\xi^*)| \leq 1$ such that for every $\bar \xi \in \T$ with $d(\bar \xi, \xi^*) < \frac 1 4$
\begin{align*}
\biggl(\fint_{d(\eps\theta; \bar\xi) < \eps}\int_0^{+\infty} (1+ \mu)^{-6} |\tilde \Psi_\eps(\xi^*, \theta,\mu) - A_{\eps}(\xi^*)H_1(k_{1,\eps}, \mu) e^{-i k_{1,\eps}\theta}|^2 \biggr)^{\frac 1 2}\stackrel{\eqref{right.scaling.error}}{\lesssim} \eps + d(\xi^* +\bar \xi , \xi^*).
\end{align*}
This corresponds to \eqref{first.order.error} if we  rescale the variables $(\theta, \mu) \to (\frac \xi \eps ,\frac s \eps)$.

\bigskip

Similarly, we use Step 1 and 3 and \eqref{quadratic.growth.psi.2.eps} instead of \eqref{right.scaling.error} to bound for all $\xi \in \T$ with $d(\xi, \xi^*)< \frac 1 4$
\begin{align*}
\biggl(\fint_{d(\eps \theta; \bar\xi) < 1} \int_0^{+\infty}& (1+ \mu^6)^{-2} |\tilde \Psi_\eps(\xi^*, \theta,\mu)\\
&  - \bigl( A_{\eps}(\xi^*)( 1 + i B_1(k_{1,\eps})\kappa(\xi^*) \eps \theta) H_1(k_{1,\eps}, \mu) +\eps A_{\eps}(\xi^*) W_{1,\eps}(\mu)\bigr) e^{-i k_{1,\eps}\theta}|^2 \biggr)^{\frac 1 2}\notag\\
&\lesssim  d(\xi^* + \bar \xi, \xi^*)^2 + \eps^2. 
\end{align*}
 This turns as well into \eqref{second.order.error} if we use the triangle inequality, together with the boundedness of $|A_\eps(\xi^*)|$ and of $\| W_{1,\eps}\|_{L^2((0,+\infty))}$ (see the display after \eqref{first.order.approx}),  and switch back to the original variables $(\xi, s) = (\eps\theta, \eps\mu)$. The proof of Proposition \ref{p.main} is therefore complete.
\end{proof}

\section{Proof of Theorem \ref{t.main} and Corollary \ref{c.spectrum}}
As argued at the beginning of the previous section, we may reduce to prove \eqref{closeness.to.flat.theorem} for the functions $e^{i\rho}\eta\Psi_\eps$ and $\sum_{l=1}^N C_l e^{i(B_l(k_l)\int_0^\xi \kappa(y) \d y -  k_{l,\eps}\frac \xi \eps )} H_l(k_l, \mu)$.  Also in this case, we keep the notation $\Psi_\eps$, $\Psi_{\text{flat},\eps}$ for the previous two functions.

\bigskip

As in the case of Theorem \ref{t.main.easy}, we rely on the following two propositions, which generalize of Propositions \ref{p.main}-\eqref{p.main.2} to the case $N \geq 1$.  To do so, we need to define the following generalization of $m_\eps$ in \eqref{def.m.eps}:  for each $M \geq 1$, we denote by $m_\eps(M)$ the maximum
\begin{align}\label{definition.omega}
m_\eps(M) = \sup_{\xi^* \in \T} \biggl(M^{-1}\int_{|\xi -\xi^*|< M\eps} \int_{\R^+}|\Psi_\eps|^2 \d s \d \xi \biggr)^{\frac 12} \leq 1.  
\end{align}
In other words, the quantity $m_\eps$ in \eqref{def.m.eps} corresponds to $m_\eps(1)$ above.
Similarly, for $M \geq 1$ and $m\in \N$ we write
\begin{align}\label{norm.3.bars}
\iii{g}_{m, M}:= \sup_{\Theta \in \R}( 1+ |\frac{\Theta}{M}|)^{-m}\bigl(\fint_{|\theta - \Theta | < M} \int_{\R_+} |g(\theta, \mu)|^2 \d\theta \, \d\mu \bigr)^{\frac 1 2},
\end{align}
the rescaled version of $\iii{\cdot }_m$ in \eqref{norm.3.bars.easy}.

\bigskip

\begin{prop}\label{p.main.main}
Let $\{\Psi_\eps \}_{\eps>0}, \{\lambda_\eps\}_{\eps> 0}$, $\lambda, \delta$ and $N$ be as in \eqref{distance.landau}. Let $\eps_0$ be as in \eqref{roots.branches}. For every $M \geq 1$ fixed and all $\eps \leq \eps_0$, the following holds:

\smallskip

For each $ \xi^* \in \T$, there exist $\{A_{\eps, M}^{(l)}(\xi^*) \}_{l=1}^N \subset \C$, $|A_{\eps, M}^{(l)}| \leq 1$, such that 
for every $\bar \xi \in \T$ with $d(\xi^*, \bar \xi) < \frac 14$ 
\begin{align}\label{closeness.to.flat.1.general}
\biggl(\fint_{d(\xi; \bar \xi) < M\eps} \int_0^{+\infty} &(1+ (\frac s \eps)^3)^{-2} |\frac{\sqrt \eps}{m_\eps(M)} \Psi_\eps(\xi^* + \xi, s)\notag\\
& \quad\quad - \frac{1}{\sqrt \eps} \sum_{l=1}^N A_{\eps,M}^{(l)}(\xi^*) H_1(k_{1,\eps}, \frac s \eps) e^{-i k_{1,\eps} \frac{\xi}{\eps}}|^2 \d\xi \, \d s \biggr)^{\frac 1 2} \lesssim d(\bar\xi+ \xi^*; \xi^*) + M\eps,
\end{align}
and
\begin{align}\label{closeness.to.flat.2.general}
\biggl(\fint_{d(\xi; \bar \xi) < M\eps} \int_0^{+\infty} &(1+ (\frac s \eps)^6)^{-2} |\frac{\sqrt \eps}{m_\eps(M)} \Psi_\eps(\xi^* + \xi, s) \notag \\
& \quad\quad -\frac{1}{\sqrt \eps} \sum_{l=1}^N A_{\eps,M}^{(l)}(\xi^*)( 1 + i B_l(k_{\eps,l}) \kappa(\xi^*) \xi) H_1(k_{1,\eps}, \frac s \eps) e^{-i k_{1,\eps}\frac \xi \eps}|^2 \biggr)^{\frac 1 2}\lesssim d(\bar\xi+ \xi^*; \xi^*)^2 + M\eps.
\end{align}
\end{prop}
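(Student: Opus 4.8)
The plan is to reproduce, branch by branch, the three–step blow–up analysis carried out in the proof of Proposition~\ref{p.main}, the only structural novelty being that the limiting profile is now an $N$–term linear combination instead of a single half–plane mode. Fix $\xi^*\in\T$ and $M\ge1$, and---exactly as in \eqref{rescaled.zero.order}---set
\[
\tilde\Psi_\eps(\xi^*;\eps\theta,\eps\mu):=\frac{\eps}{m_\eps(M)}\,\Psi_\eps(\xi^*+\eps\theta,\eps\mu),
\]
which by \eqref{definition.omega} obeys $\iii{\tilde\Psi_\eps}_{0,M}\lesssim1$ and solves the rescaled problem \eqref{rescaled.eigenvalue.problem} with right–hand side $\eps f_\eps$, $f_\eps=H_{1,\eps}\tilde\Psi_\eps+\eps H_{2,\eps}\tilde\Psi_\eps$ (Lemma~\ref{l.local.hamiltonian}). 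The rescaled form of Proposition~\ref{t.localization} together with the energy estimate of Lemma~\ref{l.energy.estimate} gives, as in Step~1 of Proposition~\ref{p.main}, uniform $\iii{\cdot}_{0,M}$–bounds for $\tilde\Psi_\eps$ and its first two magnetic derivatives; hence along a subsequence $\tilde\Psi_\eps\rightharpoonup\Psi_0$ (strongly in $L^2_{loc}(\R\times\R_+)$ after the usual Fourier upgrade) with $(H_0-\lambda)\Psi_0=0$, Dirichlet on $\{\mu=0\}$, and $\iii{\Psi_0}_{0,M}\lesssim1$. The new input: since $\lambda\in(2N-1,2N+1)$ the resonant branches are exactly $\nu_1,\dots,\nu_N$ at $k_1,\dots,k_N$ (cf. \eqref{roots}), so Lemma~\ref{l.identification} (identifying sublinear solutions of the flat problem as combinations of the resonant modes) forces
\[
\Psi_0=\sum_{l=1}^N A^{(l)}(\xi^*)\,e^{-ik_l\theta}H_l(k_l,\mu),\qquad |A^{(l)}(\xi^*)|\le1,
\]
the last bound by weak lower semicontinuity of $\iii{\cdot}_{0,M}$.

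For the first– and second–order correctors I would run the $\beta_\eps$–dichotomy of Steps~2--3 of Proposition~\ref{p.main} with the amplitudes
\[
A^{(l)}_{\eps,M}(\xi^*):=\fint_{|\theta|<M}\int_0^{+\infty}\tilde\Psi_\eps\,e^{ik_{l,\eps}\theta}H_l(k_{l,\eps},\mu)\,\d\mu\,\d\theta,\qquad \Psi_{0,\eps}:=\sum_{l=1}^N A^{(l)}_{\eps,M}(\xi^*)\,e^{-ik_{l,\eps}\theta}H_l(k_{l,\eps},\mu),
\]
Cauchy–Schwarz and the normalization of the $H_l(k_{l,\eps},\cdot)$ giving $|A^{(l)}_{\eps,M}|\le1$. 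Dividing $\tilde\Psi_\eps-\Psi_{0,\eps}$ by the correct power of $\eps$---selected by the usual dichotomy on $\beta_\eps:=\big(\fint_{|\theta|<M}\int(1+\mu)^{-6}|\tilde\Psi_\eps-\Psi_{0,\eps}|^2\big)^{1/2}$ and the nondegeneracy argument of Step~2 (which rules out $\eps/\beta_\eps\to0$ via the orthogonality built into $A^{(l)}_{\eps,M}$)---and invoking the growth estimate Proposition~\ref{proposition.growth} to upgrade boundedness to linear growth, one obtains a limit $\Psi_1$ solving $(H_0-\lambda)\Psi_1=H_1\Psi_0$, where $H_1:=\lim_{\eps\to0}H_{1,\eps}$ is the frozen–coefficient first–order operator. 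The crucial structural point is that $H_1$ is a differential operator in $(\theta,\mu)$ with $\xi^*$–frozen coefficients, so it preserves the frequency $e^{-ik_l\theta}$ of each summand of $\Psi_0$: the branches decouple \emph{exactly} at the level of the equation on $\R\times\R_+$, the resonant Fourier component of $H_1\Psi_0$ along $e^{-ik_l\theta}H_l(k_l,\mu)$ producing a secular, linear–in–$\theta$ term with slope $iB_l(k_l)\kappa(\xi^*)$ (this is where $\lambda\notin\sigma_{\text{Landau}}$ and $\nu'_l(k_l)\ne0$ enter, through the resolvent expansion underlying Proposition~\ref{proposition.growth}), while the non–resonant part yields a bounded $\mu$–profile $W_l$ with $\int_0^{+\infty}W_l\,H_l(k_l,\mu)\,\d\mu=0$. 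Thus $\Psi_1=\sum_l A^{(l)}(\xi^*)\big(iB_l(k_l)\kappa(\xi^*)\theta\,H_l(k_l,\mu)+W_l(\mu)\big)e^{-ik_l\theta}$. The second–order step repeats this one order higher, applying Proposition~\ref{proposition.growth} now with polynomial degree $1$ to get quadratic growth of the second remainder. Finally, as in Step~4 of Proposition~\ref{p.main}, translating back to $(\xi,s)=(\eps\theta,\eps\mu)$ and to windows $d(\xi;\bar\xi)<M\eps$ converts these three facts into \eqref{closeness.to.flat.1.general}--\eqref{closeness.to.flat.2.general}, with error $d(\bar\xi+\xi^*;\xi^*)+M\eps$ and constants independent of $\eps$, the parameter $M$ entering only through the normalizations $m_\eps(M)$ and $\iii{\cdot}_{m,M}$ (harmless for the eventual choice $M=M_\eps$).

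The main obstacle is the interaction of the $N$ plane waves $e^{ik_{l,\eps}\theta}$: over a $\theta$–window of size $M$ the profiles $e^{-ik_l\theta}H_l(k_l,\mu)$ are only \emph{approximately} $L^2$–orthogonal---the would–be $\delta(k_l-k_j)$ is replaced by a sinc of width $\sim1/M$ times the (finite, $\eps$–uniform) overlap $\int_0^{+\infty}H_l(k_{l,\eps},\cdot)H_j(k_j,\cdot)$, a mismatch quantified by \eqref{diophantine.main}--\eqref{diophantine.main.sum}. This cross–talk shows up in the projections defining $A^{(l)}_{\eps,M}$ and in the passage between the $\iii{\cdot}_{m,M}$–norms and the infinite–line analysis. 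For the present statement it is harmless, since $M$ is fixed and we only claim $|A^{(l)}_{\eps,M}|\le1$ (not $\sum_l|A^{(l)}_{\eps,M}|^2=1$), so the cross–talk feeds only into the errors already allowed on the right–hand sides. The genuine exploitation of $M\to\infty$---where \eqref{diophantine.main} makes the decoupling exact in the limit and pins $\sum_l|A^{(l)}|^2=1$---is deferred to the gluing argument that plays, for $N\ge1$, the role of Lemma~\ref{p.main.2} in the proof of Theorem~\ref{t.main}.
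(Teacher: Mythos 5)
Your overall architecture (blow-up, Liouville identification via Lemma~\ref{l.identification}, $\beta_\eps$--dichotomy, iterated application of Proposition~\ref{proposition.growth}, rescale back) is exactly the one the paper follows, so the skeleton is right. However, there is a genuine gap in your handling of the cross-talk between the $N$ modes, and you have in fact misidentified where it enters. You assert that for the present proposition the finite-window non-orthogonality of the profiles $e^{-ik_{l,\eps}\theta}H_l(k_{l,\eps},\cdot)$ is \emph{harmless}, on the grounds that only the bound $|A^{(l)}_{\eps,M}|\le 1$ (and not the constraint $\sum_l|A^{(l)}_{\eps,M}|^2=1$) is claimed, and you defer the diophantine input to the gluing argument (the analogue of Lemma~\ref{p.main.2}). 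This is not correct: the near-orthogonality is already needed \emph{inside} the proof of the present proposition, namely in the nondegeneracy step that rules out $\eps/\beta_\eps\to 0$. That step proceeds by contradiction, producing a limit $\Psi_1=\sum_{l=1}^N b_l e^{-ik_l\theta}H_l(k_l,\mu)$ with unit weighted $L^2$-norm on $\{|\theta|<M\}$, while the projections built into the $A^{(l)}_{\eps,M}$ only yield the \emph{approximate} orthogonality conditions $\bigl|\fint_{|\theta|<M}\int\Psi_1\,e^{ik_l\theta}H_l(k_l,\mu)\bigr|\le\gamma N$, with $\gamma$ bounding the cross-terms $\bigl|\fint_{|\theta|<M}e^{i(k_l-k_j)\theta}\bigr|\,|\int H_l H_j|$. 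For a fixed $M\ge1$ and nearby $k_l$'s this $\gamma$ need not be small; the Gram matrix of the $N$ restricted modes can then be near-singular, and the intended contradiction with the normalization simply does not close.

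The paper resolves this by first choosing $\gamma$ small and a threshold $M_0$ such that \eqref{diophantine} holds for all $M\ge M_0$, running the dichotomy argument only for such $M$, and then transferring the estimate down to $1\le \tilde M<M_0$ via the elementary comparison $\frac{M_1}{M_2}m_\eps(M_1)\le m_\eps(M_2)\le\frac{M_2}{M_1}m_\eps(M_1)$ together with the rescaled amplitudes $A^{(l)}_{\tilde M,\eps}=\frac{m_\eps(M_0)}{m_\eps(\tilde M)}A^{(l)}_{M_0,\eps}$. Your write-up omits this initial reduction to $M\ge M_0$ and the subsequent transfer; without it, the claimed uniformity in $M\ge1$ is not justified. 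With that reduction inserted, the remainder of your argument (the frozen-coefficient $H_1$ preserving the frequencies $e^{-ik_l\theta}$, the branch-by-branch identification of the secular slope $iB_l(k_l)\kappa(\xi^*)$, and the passage to second order via Proposition~\ref{proposition.growth} with polynomial degree $1$) matches the paper and is sound.
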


\bigskip

\begin{lem}\label{p.main.main.2} Let $\eps_0$ be as in the previous lemma. For every $\eps \leq \eps_0$ there exist $F_\eps^{(1)}, \cdots , F^{(N)}_\eps \in C^0([0, 1])$ such that for every family $M_\eps \to +\infty$ satisfying $\lim_{\eps \downarrow 0}\eps M_\eps = 0$ the function
\begin{align*}
\tilde \Psi_{\text{flat},\eps}(\xi, s):=
\sum_{l=1}^N F_\eps^{(l)}(\xi) e^{-i k_{l,\eps} \frac \xi \eps}H_l(k_{l,\eps}, \frac s \eps),
\end{align*}
satisfies
\begin{align}\label{closeness.to.flat.improved.N}
\lim_{\eps \to 0}\bigl(\fint_{d(\xi;\xi^*)< M_\eps \eps} \int_{0}^{+\infty}|\frac{\sqrt \eps}{m_\eps(M_\eps)}\Psi_\eps(\xi, s) -
\frac{1}{\sqrt{\eps}}\tilde \Psi_{\text{flat}, \eps}(\xi, s)|^2 \d\xi \d s \bigr)^{\frac 1 2}= 0.
\end{align}
Furthermore, for every $\eps_j \to 0$, there exists a subsequence and $C_1, \cdots, C_N$ with $\sum_{l=1}^N |C_l|^2 = 1$ such that for every $l=1, \cdots, N$
\begin{align}\label{closeness.to.amplitude.N}
F_\eps^{(l)} \to C_l e^{i B_l(k_l) \int_0^\xi \kappa(x) \d x} \ \ \ \ \text{uniformly in $\T$.}
\end{align}
\end{lem}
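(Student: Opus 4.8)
The plan is to follow the proof of Lemma~\ref{p.main.2} branch by branch. The one genuinely new feature is that the $N$ modes $e^{-ik_{l,\eps}\xi/\eps}H_l(k_{l,\eps},\cdot)$, $l=1,\dots,N$, cannot be disentangled at scale $\eps$: the profiles $H_l(k_{l,\eps},\cdot)$ are not mutually orthogonal in $L^2(\R_+)$ and the plane waves $e^{ik_{l,\eps}\xi/\eps}$ do not decouple over an $O(\eps)$ window in $\xi$; it is exactly for this reason that one must work on the mesoscale $M_\eps\eps$, on which \eqref{diophantine.main}--\eqref{diophantine.main.sum} make $\{e^{ik_{l,\eps}\xi/\eps}\}_{l=1}^N$ asymptotically orthonormal. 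So I would fix an admissible family $\{M_\eps\}$; for $\xi^*\in\T$ let $\tilde\Psi_\eps(\xi^*;\theta,\mu)$ denote the rescaled eigenfunction of \eqref{rescaled.zero.order} with $m_\eps$ replaced by $m_\eps(M_\eps)$, and for $\xi,\xi^*\in\T$ set
\[
f^{(l)}_\eps(\xi,\xi^*):=\fint_{d(\eps\theta;\xi)<M_\eps\eps}\int_0^{+\infty}\tilde\Psi_\eps(\xi^*;\theta,\mu)\,e^{ik_{l,\eps}\theta}H_l(k_{l,\eps},\mu)\,\d\mu\,\d\theta,\qquad F^{(l)}_\eps(\xi):=f^{(l)}_\eps(\xi,0),
\]
which are continuous in $\xi$. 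Using \eqref{diophantine.main} to identify $f^{(l)}_\eps(\xi,\xi^*)$ with the coefficient $A^{(l)}_{\eps,M_\eps}(\xi^*)$ of Proposition~\ref{p.main.main} (the one multiplying $e^{-ik_{l,\eps}\xi/\eps}H_l(k_{l,\eps},\cdot)$) up to an error that vanishes as $M_\eps\to+\infty$, together with Cauchy--Schwarz, the decay of $H_l(k_{l,\eps},\cdot)$ at infinity (Lemma~\ref{l.basic.flat}, to absorb the weights $(1+(s/\eps)^j)^{-2}$), $k_{l,\eps}\to k_l$, and \eqref{closeness.to.flat.1.general}--\eqref{closeness.to.flat.2.general}, I would obtain --- exactly as in the passage \eqref{distance.A.1}--\eqref{eps.differential} --- that $|F^{(l)}_\eps|\ls 1$ on $\T$ and, for $\xi,\xi^*\in\T$ with $d(\xi;\xi^*)<\tfrac14$,
\[
|F^{(l)}_\eps(\xi)-F^{(l)}_\eps(\xi^*)|\ls d(\xi;\xi^*)+r_\eps,\qquad \bigl|F^{(l)}_\eps(\xi)-F^{(l)}_\eps(\xi^*)\bigl(1+i(\xi-\xi^*)B_l(k_{l,\eps})\kappa(\xi^*)\bigr)\bigr|\ls d(\xi;\xi^*)^2+r_\eps,
\]
where $r_\eps\to0$ collects the error $M_\eps\eps$ and the decoupling error $\sup_{l\ne j}\bigl|\fint_{|\theta|<M_\eps}e^{i(k_{l,\eps}-k_{j,\eps})\theta}\,\d\theta\bigr|$ --- the latter tends to $0$ because the $k_l$ are pairwise distinct (the spectrum of each $\O(k)$ is simple) and $M_\eps\to+\infty$.

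With these two estimates the rest of the argument of Lemma~\ref{p.main.2} transfers with only cosmetic changes. For \eqref{closeness.to.flat.improved.N}: on $\{s<\eps^{\alpha}\}$ with $\alpha$ close to $1$ it comes from \eqref{closeness.to.flat.1.general}--\eqref{closeness.to.flat.2.general} combined with \eqref{diophantine.main.sum} (used to replace $\sum_l A^{(l)}_{\eps,M_\eps}(\xi^*)e^{-ik_{l,\eps}\xi/\eps}H_l$ by $\sum_l F^{(l)}_\eps(\xi)e^{-ik_{l,\eps}\xi/\eps}H_l$), while the tail $\{s>\eps^{\alpha}\}$ is handled by Proposition~\ref{t.localization} for $\Psi_\eps$ and by the decay of each $H_l(k_{l,\eps},\cdot)$, exactly as in \eqref{closeness.to.flat.improved.external}. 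Then, by the first of the two displayed bounds, $\{F^{(l)}_\eps\}_\eps$ is uniformly bounded and equicontinuous up to the vanishing $r_\eps$, so Ascoli--Arzel\`a on a finite cover of $\T$ by arcs of length $<\tfrac14$ (as in the proof of Lemma~\ref{p.main.2}) gives, along a common subsequence $\eps_j\downarrow0$ and simultaneously for all $l$, uniform limits $F^{(l)}_\eps\to F^{(l)}$; passing to the limit in the second bound yields $F^{(l)\prime}=iB_l(k_l)\kappa F^{(l)}$, hence $F^{(l)}(\xi)=C_l\exp\bigl(iB_l(k_l)\int_0^{\xi}\kappa\bigr)$ and in particular $|F^{(l)}|\equiv|C_l|$. (The $F^{(l)}$, and the moduli $|C_l|$, are independent of the chosen $\{M_\eps\}$.)

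Finally, to get $\sum_{l=1}^N|C_l|^2=1$ --- the multi-branch analogue of $|C|=1$ in Lemma~\ref{p.main.2} --- I would, for each $\eps$, pick $\xi^*_\eps$ realising the supremum in \eqref{definition.omega}; then, as in the $|C|=1$ step of Lemma~\ref{p.main.2}, $\tfrac{\sqrt\eps}{m_\eps(M_\eps)}\bigl(\fint_{d(\xi;\xi^*_\eps)<M_\eps\eps}\int_0^{+\infty}|\Psi_\eps|^2\bigr)^{1/2}=1$ by maximality of $\xi^*_\eps$, so by \eqref{closeness.to.flat.improved.N} and the triangle inequality
\[
\Bigl(\fint_{d(\xi;\xi^*_\eps)<M_\eps\eps}\int_0^{+\infty}\bigl|\tfrac1{\sqrt\eps}\tilde\Psi_{\text{flat},\eps}(\xi,s)\bigr|^2\,\d\xi\,\d s\Bigr)^{1/2}=1+o(1).
\]
Carrying out the $s$-integration with $\|H_l(k_{l,\eps},\cdot)\|_{L^2(\R_+)}=1$, the diagonal part of $|\tilde\Psi_{\text{flat},\eps}|^2$ contributes $\fint_{d(\xi;\xi^*_\eps)<M_\eps\eps}\sum_l|F^{(l)}_\eps(\xi)|^2\,\d\xi$, while the cross terms $l\ne j$ are $o(1)$: each $F^{(l)}_\eps$ varies by $\ls M_\eps\eps+r_\eps=o(1)$ over the window whereas $e^{-i(k_{l,\eps}-k_{j,\eps})\xi/\eps}$ oscillates on scale $\eps$, so \eqref{diophantine.main} forces $\fint_{d(\xi;\xi^*_\eps)<M_\eps\eps}F^{(l)}_\eps\overline{F^{(j)}_\eps}\,e^{-i(k_{l,\eps}-k_{j,\eps})\xi/\eps}\,\d\xi=o(1)$. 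Since $F^{(l)}_\eps\to F^{(l)}$ uniformly with $|F^{(l)}|\equiv|C_l|$ along the subsequence, $\fint_{d(\xi;\xi^*_\eps)<M_\eps\eps}|F^{(l)}_\eps|^2\to|C_l|^2$, whence $\sum_{l=1}^N|C_l|^2=1$; running this along an arbitrary subsequence gives \eqref{closeness.to.amplitude.N}. The main obstacle throughout is precisely the branch decoupling in the first step: the error in \eqref{diophantine.main}, hence $r_\eps$, vanishes only as $M_\eps\to+\infty$ and not as $\eps\to0$ with a fixed window, which is why \eqref{closeness.to.flat.improved.N} carries a bare $o(1)$ rather than the rate $\sqrt\eps$ available when $N=1$.
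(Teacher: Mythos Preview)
Your proposal is correct and follows essentially the same approach as the paper's proof: you define the averaged coefficients $f^{(l)}_\eps$ over $M_\eps\eps$-windows, use Proposition~\ref{p.main.main} together with \eqref{diophantine.main} to derive the two Lipschitz-type bounds with the vanishing error $r_\eps$, apply Ascoli--Arzel\`a and pass to the ODE limit, and finally obtain $\sum_l|C_l|^2=1$ from the maximizer of $m_\eps(M_\eps)$ and the oscillatory cancellation of cross terms. The paper records the errors \eqref{distance.A.1.1}--\eqref{distance.A.2} a bit more explicitly before invoking the $N=1$ argument, but the content is the same.
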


\bigskip

\begin{proof}[Proof of Theorem \ref{t.main}]   Let $\{M_\eps\}_{\eps> 0}$ be any family satisfying the assumption of Theorem \ref{t.main}.
 
\bigskip

As was done for the proof of Theorem \ref{t.main.easy}, we rely on  Lemma \ref{p.main.2} to show that
\begin{align}\label{uniform.N}
\frac{\sqrt\eps}{m_\eps(M_\eps)} \to 1.
\end{align}
The proof for this limit is similar to the one for \eqref{uniform.norm}: The only difference in this case is that we combine condition $\sum_{l=1}^N |C_l|^2=1$ of Lemma \ref{p.main.main.2} with the fact that, since $M_\eps \to +\infty$,
\begin{align}\label{almost.orthogonality.lemma}
\lim_{\eps \downarrow 0}|\fint_{|\theta|< M_\eps} e^{i(k_j - k_l) \theta} \d\theta | = 0, \ \ \ \ \text{ for all $l,j = 1,\cdots, N$ with $l \neq j$.}
\end{align}

\bigskip

Again as in the proof of Theorem \ref{t.main.easy}, we use \eqref{uniform.N} and \eqref{closeness.to.flat.improved.N} to reduce the proof of Theorem \ref{t.main}  to show that, if we select a sequence $\eps_j \to 0$ such that \eqref{closeness.to.amplitude.N} holds, then
\begin{align*}
\lim_{j \uparrow +\infty}\sup_{\xi^* \in \T}\bigl(\fint_{d(\xi;\xi^*)<  M_j \eps_j} \int_{\R_+} | \frac{1}{\sqrt{\eps_j}}\tilde \Psi_{\text{flat}, \eps_j}(\xi, s) -\frac{1}{\sqrt{ \eps_j}} \Psi_{\text{flat}, \eps_j}(\xi,s)|^2 \d\xi \, \d s \bigr)^{\frac 1 2}= 0.
\end{align*}
By the triangle inequality and condition \eqref{almost.orthogonality.lemma}, we reduce the limit above to 
$$
\lim_{j \uparrow +\infty}\sup_{\xi^* \in \T}\bigl(\fint_{d(\xi;\xi^*)<  M_j \eps_j} | F_\eps^{(|)}(\xi) - F^{(l)}(\xi)|^2 \d\xi \bigr)^{\frac 1 2}= 0,
$$
that is true thanks to \eqref{closeness.to.amplitude.N} of Lemma \ref{p.main.main.2}. This concludes the  proof of Theorem \ref{t.main}. 
\end{proof}

\begin{proof}[Proof of Lemma \ref{p.main.main.2}]
Let $ \{M_\eps \}_{\eps>0}$ be a fixed family such that $M_\eps \to \infty$ and
\begin{align}\label{M.not.too.fast}
\lim_{\eps \downarrow 0}  \eps M_\eps = 0.
\end{align}

\bigskip

We begin by arguing that there exist $F_\eps^{(1)}, \cdots, F_\eps^{(N)} \in C^0([0,1])$ for which  
\begin{align}\label{general.1}
\biggl(\fint_{d(\xi;\bar \xi) < M_\eps \eps} \int_0^{+\infty} &(1+ (\frac s \eps)^6)^{-2} |\frac{\sqrt \eps}{m_\eps(M)} \Psi_\eps(\xi, s) - \frac{1}{\sqrt \eps} \sum_{l=1}^N F_\eps^{(l)}(\xi) H_1(k_{1,\eps}, \frac s \eps) e^{-i k_{1,\eps} \frac{\xi}{\eps}}|^2 \d\xi \, \d s \biggr)^{\frac 1 2} \lesssim  M_\eps \eps.
\end{align}
The proof of this is similar to the one for \eqref{p.main.2.1} of Lemma \ref{p.main.2} and we sketch below only the differences: We define for each $l=1, \cdots, N$
 \begin{align*}
f_{l, \eps}(\bar \xi, \xi^*) := \fint_{|\theta -\frac{\bar \xi}{\eps}| < M_\eps} \int_0^{+\infty} \frac{ \eps}{m_\eps(M_\eps)} \Psi_\eps(\xi^* +\eps\theta, \eps \mu)   e^{i k_{l,\eps}\theta} H_l(k_{l,\eps}, \mu) \d \mu \d\theta
\end{align*} 
and use \eqref{closeness.to.flat.1.general} and the bound $|A^{(l)}_{\eps, M_\eps}| \leq 1$ to infer that
\begin{align}\label{distance.A.1.1}
| f_{l,\eps}(\bar\xi, \xi^*) - A_{l,\eps}(\xi^*)| \lesssim d(\bar\xi+ \xi^*; \xi^*) +M_\eps \eps + \sum_{j\neq l} |\fint_{|\theta - \frac {\bar\xi}{\eps}| < M_\eps} e^{i (k_{j,\eps}- k_{l,\eps})\theta} \d\theta|.
\end{align}
Similarly, using this time \eqref{closeness.to.flat.2.general}, we also have that
\begin{align}\label{distance.A.2}
| f_{l,\eps}(\bar\xi, \xi^*) - A_{l,\eps}(\xi^*) ( 1+ i \bar\xi B_l( k_{l,\eps}) \kappa(\xi^*) )| &\lesssim d(\bar\xi+ \xi^*; \xi^*)^2 \notag \\
& \quad\quad + M_\eps \eps + (1 + M_\eps \eps) \sum_{j\neq l} |\fint_{|\theta - \frac {\bar\xi}{\eps}| < M_\eps} e^{i (k_{j,\eps}- k_{l,\eps})\theta} \d\theta|.
\end{align}
By using that $M_\eps \to +\infty$, condition \eqref{M.not.too.fast} and  that $k_{j,\eps} \to k_j$ for every $j=1, \cdots, N$  the constants in the right-hand sides of \eqref{distance.A.1.1} and \eqref{distance.A.2}  vanish in the limit $\eps \downarrow 0$. We may thus argue for \eqref{general.1} exactly as done in the case $N=1$ in the proof of Lemma \ref{p.main.2}.

\bigskip

As done in Lemma \ref{p.main.2}, we upgrade \eqref{general.1} into \eqref{closeness.to.flat.improved.N} and use \eqref{distance.A.1.1}-\eqref{distance.A.2} and Ascoli-Arzel\'a's theorem to show that each $F_\eps^{(l)}$ converges uniformly to a limit $F_l$ satisfying
\begin{align*}
F_l' = i B_l(k_l)\kappa(\xi) F_l.
\end{align*}
This implies that, for every subsequence, there exist $C_l \in \C$ such that
\begin{align}\label{limit.amplitude}
F_l(\xi)= C_l \exp\bigl(i \int_0^\xi B(x) \d x )\bigr).
\end{align}

\bigskip

To conclude the proof of the theorem, it remains to show that the constants $C_1, \cdots C_N \in \C$ satisfy
the constraint 
\begin{align}\label{complex.sphere.N}
\sum_{l=1}^N |C_l|^2 = 1.
\end{align}
The proof for this identity follows the same lines of \eqref{complex.sphere} in Lemma \ref{p.main.2}: We stress that
\begin{align*}
\lim_{\eps \downarrow 0} \sup_{\bar \xi \in \T}|\fint_{d(\xi; \bar \xi) < M_\eps \eps} \int_0^{+\infty} |\frac{1}{\sqrt \eps} \sum_{l=1}^N F^{(l)}(\xi) H_1(k_{1,\eps}, \frac s \eps) e^{-i k_{1,\eps} \frac{\xi}{\eps}}|^2 \d\xi \, \d s- \sum_{l=1}^N |F_\eps^{(l)}(\bar \xi)|^2|= 0.
\end{align*}
This indeed follows by the triangle inequality, the uniform continuity estimate for each $F_\eps^{(l)}$ (i.e. the equivalent of the first line in \eqref{eps.differential}) and
\begin{align*}
| \fint_{d(\xi; \bar \xi) < M_\eps \eps} \int_0^{+\infty} |\frac{1}{\sqrt \eps} \sum_{l=1}^N F^{(l)}(\bar\xi) H_1(k_{1,\eps}, \frac s \eps) e^{-i k_{1,\eps} \frac{\xi}{\eps}}|^2 \d\xi \, \d s- \sum_{l=1}^N |F_\eps^{(l)}(\bar \xi)|^2 | {\lesssim}  \sum_{j\neq l} |\fint_{|\theta - \frac {\bar\xi}{\eps}| < M_\eps} e^{i (k_{j,\eps}- k_{l,\eps})\theta} \d\theta|.
\end{align*}
\end{proof}

\begin{proof}[Proof of Proposition \ref{p.main.main}] 
The proof of this proposition follows the same lines of the one of Proposition \ref{p.main} and we enumerate below the only parts of the proof which require a non-trivial modification. 

\bigskip

Let $k_{l, \eps}, k_l$, $l=1, \cdots, N$, be as in \eqref{roots.branches} and \eqref{roots}, respectively.  Let $\gamma > 0$ be a (small) parameter that will be fixed later (see Steps 2-3). Since $k_{j,\eps } \to k_j$ with $k_j \neq k_i $ for each $j\neq i$, we may find  $\eps_1>0$ and $M_0 \geq 1$ such that the following holds: For all $M \geq M_0$, $\eps \leq \eps_1$ we may bound
\begin{align}\label{diophantine}
|\fint_{|\theta|< M} e^{i (k_i - k_j) \theta} |\leq \gamma, 
\end{align}
for all $l, j = 1, \cdots N$ with $j \neq l$.

\bigskip

We observe that is suffices to show the result of Proposition \ref{p.main.main} for any $M \geq M_0$. Let indeed assume that \eqref{closeness.to.flat.1.general}-\eqref{closeness.to.flat.2.general} hold 
for $M \geq M_0$. Since for $1\leq M_1 \leq M_2$ we have that
\begin{align}
\frac{M_1}{M_2} m_\eps(M_1) \leq m_\eps( M_2) \leq \frac{M_2}{M_1} m_\eps(M_1).
\end{align}
Then, if $1 \leq \tilde M < M_0$ it suffices to define $A_{\tilde M, \eps}^{(l)}= \frac{m_\eps(M_0)}{m_\eps(M)} A_{M_0, \eps}^{(l)}$ and use the inequality above together with 
\eqref{closeness.to.flat.1.general}-\eqref{closeness.to.flat.2.general} for $M_0$.

\bigskip

\noindent{\em Step 1.\, } Let  $M\geq M_0$ be fixed. For any point $\xi^* \in \T$ we consider the rescaled function 
\begin{align*}
\tilde \Psi_\eps(\theta, \mu) = \frac{\eps}{m_\eps(M)}\Psi_\eps( \xi^* + \eps\theta, \eps \mu).
\end{align*}
We stress that $\tilde\Psi_\eps$ here is the analogue of the one of Proposition \ref{p.main}, with the only difference that we divided for $m_\eps(M)$ instead of $m_\eps$
($=m_\eps(1)$ according to definition \eqref{definition.omega}). Furthermore, this also means that in this case $\tilde\Psi_\eps$ depends not only on the origin $\xi^*$ of the blow-up, but also on the choice of $M$. 

\bigskip

By arguing as in proposition of Proposition \ref{p.main} the function $\tilde \Psi_\eps$ converges in the same spaces to a function $\Psi_0$. This time, Lemma \ref{l.identification} yields that
\begin{align}\label{characterization.psi.0.N}
\Psi_0 (\xi; \theta, \mu)=\sum_{l=1}^{N} A_{M}^{(l)}(\xi^*) H_l(k_l, \mu) e^{i k_l \theta},
\end{align}
for some  $A_M^{(1)}(\xi^*), \cdots, A_{M}^{(N)}(\xi^*) \in \C$. 

\bigskip

\noindent{\em Step 2. \,} We define for each $l=1, \cdots, N$
\begin{align}\label{def.A.l.eps}
A_{l, M, \eps}(\xi^*)= \fint_{|\theta|< M} \int \tilde \Psi_\eps(\xi^*; \theta, \mu) e^{ik_j\theta} H_l(k_j, \mu) \d\mu \d\theta.
\end{align}
We prove the analogue of Step 2 of Proposition \ref{p.main}, this time with 
\begin{align*}
\Psi_{0,\eps}:= \sum_{l=1}^{N} A_{l, M,\eps}(\xi^*) H_l(\mu, k_{l,\eps}) e^{i k_{l,\eps} \theta},
\end{align*}
and
\begin{align}
 \Psi_1 = \sum_{l=1}^{N}A_{M, l}(\xi^*) \bigl( i B_{l}(k_l) \kappa(\xi^*) \theta H_l(k_{l,\eps}, \mu)  + W_{\eps,l}(\mu) ) e^{i k_{l,\eps} \theta},
\end{align}
with $B_l(\cdot)$ as in \eqref{quantities.corollary} and each $W_{\eps,l}$ satisfying the analogue of $W_\eps$ in Step 2 of Proposition \ref{p.main} with $H_1(k_1, \cdot)$ substituted by $H_l(k_l, \cdot)$.

\bigskip

The argument is the same of Proposition \ref{p.main}, with the only difference that, when proving \eqref{contraddiction.assumption}, we need to pick a $\gamma$ small enough (but of order $\sim 1$) in  \eqref{diophantine} to get a contradiction from
$$
\fint_{|\theta| > M}\int_0^{+\infty} (1 + \mu^3)^{-6}|\Psi_1|^2 =1 
$$
for $\Psi_1(\theta, \mu) = \sum_{l=1}^{N} b_l e^{-ik_l \theta} H_l(k_l, \mu)$ with  $b_1, \cdots , b_N \in \C$ and conditions
\begin{align*}
|\fint_{|\theta|< M} \int \Psi_{1,\eps} e^{ik_l \theta} H_l(k_l, \mu) \d\mu \d\theta| \leq \gamma N, \ \ \ \text{for every $l= 1, \cdots, N$.}
\end{align*}

\bigskip

\noindent{\em Step 3.\,} This step is again the analogue of  Step 3 of Proposition \ref{p.main}, this time with
\begin{align*}
 \Psi_1^\eps = \sum_{l=1}^{N}A_{\eps,M, l} \bigl( i B_{l,\eps} \theta H_l(k_{l,\eps}, \mu)  + W_{\eps,l}(\mu) ) e^{i k_{l,\eps} \theta}.
\end{align*}
and the same changes outlined above in Step 2.

\bigskip

\noindent{\em Step 4.\,} Step 4 is exactly as in Proposition \ref{t.main}.
\end{proof}

\subsection{Proof of Corollary \ref{c.spectrum}} 
The proof of this corollary relies on Theorem \ref{t.main}.

\begin{proof}
We begin by showing that for every $m \in 2\pi \Z$ and $l \in \N$ such that $\nu_l(\eps m)= \lambda$ with $\lambda \in C$ and dist$(\lambda, \sigma_{\text{Landau}}) > \delta$, 
there exists $\lambda_\eps \in \sigma( H_\eps)$ satisfying
\begin{align}\label{asympt.analysis}
|\eps^2\lambda_\eps - \bigl(\lambda + \eps \nu'_l(\eps m) B_l(\eps m)\bigr)| < C(N, \delta) \eps^2.  
\end{align}
Let $\eta$ be any cut-off function for the set $\Omega_{\delta_1}$ with $\delta_1$ as in Lemma \ref{l.local.hamiltonian}. Let $\rho_\eps$ be as in Lemma \ref{l.local.magnetic.potential}.
By Lemmas \ref{l.local.hamiltonian}-\ref{l.local.magnetic.potential} and using classical asymptotic methods, we may find a function
$\bar \Psi_\eps: \T \times \T_{\frac 1 \eps} \times \R_+ \to \C$ 
$$
\Psi_\eps(\xi; \theta, \mu):= \bigl( e^{\int_0^\xi (\kappa(x) - 2\pi)\d x } H_l( \eps m; \mu) + \eps W_\eps(\xi, \mu) \bigr) e^{i \eps m \theta},
$$
with $\sup_{\xi \in \T} \int_0^{+\infty} |W_1(\xi,\mu)|^2 \d\mu \lesssim 1$ such that  $\bar\Psi_\eps(\xi, s)= \eta(\xi, s)e^{-i\eps^{-2} \rho_\eps(\xi, s)} \bar\Psi_\eps(\xi; \frac \xi \eps, \frac s \eps)$ solves 
\begin{align}
\begin{cases}
(H_\eps - \lambda_{0,\eps})\bar\Psi_\eps =  f_\eps\ \ \ \ &\text{in $\Omega$}\\
\bar\Psi_\eps =0 \ \ \ &\text{on $\partial\Omega$.}
\end{cases}
\end{align}
with $\| f_\eps \|_{L^2(\Omega)}\lesssim 1$ and $\lambda_{0,\eps}= \eps^{-2}\lambda + \eps^{-1}\nu_l'(\eps m)2\pi B_l(\eps m)$. This immediately implies that
\begin{align*}
\mathop{dist}(\lambda_{0,\eps} , \sigma( H_\eps)) \leq \eps^2 \|f_\eps\|\lesssim 1.
\end{align*}
Since the spectrum $\sigma( H_\eps)$ is discrete,  inequality \eqref{asympt.analysis} follows if we multiply by $\eps^2$. 

\bigskip

We now turn to the other implication: Let $ \{\lambda_\eps \}_{\eps>0}$ satisfy \eqref{distance.landau}. By compactness, we may divide the family into sequences 
such that $\eps^2 \lambda_\eps \to \lambda$ with $\lambda$ satisfying \eqref{distance.landau} for some $N, \lambda$. By applying Theorem \ref{t.main} there exists a subsequence $\{\eps_j\}_{j\in \N}$  and $C_1, \cdots, C_N \in \C$ on the complex $N$-dimensional sphere defining the limit $\Psi_{\text{flat},\eps_j}$ for $\Psi_{\eps_j}$ in \eqref{closeness.to.flat.theorem}. For the sake of a leaner notation, we forget about the index $j\in \N$ in $\{\eps_j\}_{j\in \N}$. We consider \eqref{closeness.to.flat.theorem} for any fixed $M_\eps \uparrow +\infty$ satisfying  $\eps M_\eps \to 0$.

\bigskip

By the triangle inequality and the periodicity of $\Psi_\eps$ in the angular variable, \eqref{closeness.to.flat.theorem} yields that
\begin{align*}
\lim_{\eps \downarrow 0}\bigl(\fint_{d(\xi; 0)<  M_\eps \eps} \int \frac{1}{\eps} | \Psi_{\text{flat}, \eps}(\xi + 1, s) - \Psi_{\text{flat}, \eps}(\xi,s)|^2 \d\xi \, \d s \bigr)^{\frac 1 2}= 0.
\end{align*}
After rescaling the variable $s \mapsto \eps \mu$, the definition of $\Psi_{\text{flat},\eps}$ allows to rewrite the previous limit as
\begin{align*}
\lim_{\eps \downarrow 0}\bigl(\fint_{d(\xi; 0) <  M_\eps \eps} \int |\sum_{j=1}^N C_l \biggl(e^{i( \theta_l(\xi + 1,\eps \mu)- \frac{k_{j,\eps}}{\eps})} - e^{i \theta_l(\xi,\eps\mu)} \biggr) e^{-i k_{l,\eps} \frac{\xi}{\eps}} H_j(k_j, \mu)|^2 \d\xi \, \d s \bigr)^{\frac 1 2}= 0.
\end{align*}
By the definition of $\theta_{l,\eps}(\xi, s)$ in Theorem \ref{t.main} and  \eqref{quantities.corollary} for $\omega_\eps$ the limit above further turns into
\begin{align*}
\lim_{\eps \downarrow 0}\bigl(\fint_{d(\xi; 0)<  M_\eps \eps} \int |\sum_{j=1}^N C_l \biggl(e^{iB_j(k_j) \int_0^{1} \kappa(y) \d y - i\frac{k_{l,\eps}}{\eps}} -1 \biggr) e^{-i k_{l,\eps} \frac{\xi}{\eps}+ i \theta_l(\xi, \eps\mu)} H_j(k_j, \mu)|^2 \d\xi \, \d s \bigr)^{\frac 1 2}= 0.
\end{align*}
By Gauss-Bonnet theorem $\int_0^{1}\kappa(y) \d y = 2\pi$, we  also infer that
\begin{align}\label{spectrum.1}
\lim_{\eps \downarrow 0}\bigl(\fint_{d(\xi; 0)<  M_\eps \eps} \int |\sum_{j=1}^N C_l \biggl(e^{2i\pi B_j(k_j)- i\frac{k_{l,\eps}}{\eps}} - 1 \biggr) e^{-i k_{l,\eps} \frac{\xi}{\eps}+ i \theta_l(\xi, \eps\mu)} H_l(k_l, \mu)|^2 \d\xi \, \d s \bigr)^{\frac 1 2}= 0.
\end{align}
Since $\eps M_\eps \to 0$, for all $j, l =1, \cdots, N$ we have 
\begin{align*}
\lim_{\eps \downarrow 0}&|\fint_{d(\xi; 0)< M_\eps \eps} \int  e^{i(k_{j,\eps} - k_{i,\eps}) \frac \xi \eps  +i ( \theta_l(\xi, \eps\mu)- \theta_j(\xi, \eps \mu))} H_l(k_l, \mu) H_j(k_j, \mu) \d\mu \, \d\xi |\\
& \lesssim \lim_{\eps \downarrow 0}|\fint_{d(\eps\theta; 0)< M_\eps}  e^{i(k_{j,\eps} - k_{i,\eps})\theta  +i ( B_l(k_l)- B_j(k_j)) \int_0^{\eps\theta} \kappa(y) \d y }| = 0.
\end{align*}

\bigskip

By expanding the inner square in  \eqref{spectrum.1}, using the above limits and that $\sum_{l=1}^N |C_l|^2 =1$ and $\int_0^{+\infty}|H_l(k_l, \mu)|^2 \d\mu = 1$ for all $l=1, \cdots, N$
 we conclude that
\begin{align}
 \lim_{\eps \downarrow 0} \sum_{j=1}^N |C_l \bigl(e^{2i\pi B_j(k_j)- i\frac{k_{l,\eps}}{\eps}} - 1 \bigr)|^2 = 0.
\end{align}
Furthermore,  by condition $\sum_{l=1}^N |C_l|^2 =1$ there exists $l \in \{ 1,\cdots N\}$ such that
\begin{align*}
 \lim_{\eps \downarrow 0}|e^{2i\pi B_l(k_l)- i\frac{k_{l,\eps}}{\eps}} - 1|^2 = 0.
\end{align*}
From this condition, it follows that 
\begin{align*}
k_{l,\eps} = q_\eps + \eps 2\pi B_l(k_l)  + \eps \epsilon_\eps, \ \ \  \epsilon_\eps \to 0,
\end{align*}
with $q_\eps := 2\pi \eps \lfloor \frac{k_{l,\eps}}{2\pi \eps}\rfloor \in 2\pi \eps \Z$. Hence, by \eqref{roots.branches}, it holds
\begin{align*}
\eps^2 \lambda^\eps = \nu_l( k_{l,\eps} ) = \nu_l(q_\eps + \eps 2\pi B_l(k_l)  +\eps \epsilon_\eps ),
\end{align*}
and, since $\nu_l$ is analytic (see Lemma \ref{l.basic.flat}), also
\begin{align*}
|\eps^2 \lambda^\eps - \bigl(\nu_l(q_\eps) + \eps \nu_l'(q_\eps) 2\pi B_l(k_l) \bigr) | \lesssim  \eps \epsilon_\eps + \eps^2.
\end{align*}
It now remains to substitute the term $B_l(k_l)$ with $B_l(q_\eps)$. This is allowed since by definition $q_\eps \to k_l$ and therefore
$$
\lim_{\eps \downarrow 0}\nu_l'(q_\eps) 2\pi |B_l(k_l) - B(q_\eps)| \leq C \lim_{\eps \downarrow 0} |B_l(k_l) - B(q_\eps)| = 0.
$$
This establishes Corollary \ref{c.spectrum}.
\end{proof}

\section{Auxiliary Lemmas}\label{s.auxiliary}

\subsection{Auxiliary results on the magnetic Laplacian $H_0$ in the half-plane}\label{s.half.plane}

In the next results, we fix a value $M \geq 1$ in the definition \eqref{norm.3.bars} of the norms $\iii{ \cdot }_{m,M}$. 

\smallskip

The next proposition plays a crucial in the proofs of Theorems \ref{t.main.easy}-\ref{t.main}. Roughly speaking, it states that if $\Psi=\Psi(\theta, \mu)$ is a locally bounded solution to $(H_0 - \lambda)\Psi= g$ in $\R \times \{\mu > 0\}$ with Dirichlet boundary conditions, then if the right-hand side $g$ grows like a polynomial of degree $m$ in the variable $\theta$, the solution $\Psi$ grows at most like $m+1$ in $\theta$.

\begin{prop}\label{proposition.growth}
Let $\lambda \in \R$ satisfy \eqref{distance.landau}. Let $\{\Psi_\eps \}_{\eps > 0}, \{ g_\eps\}_{\eps > 0} \subset H^2( \R \times \R_+)$ be a family of functions which are compactly supported in the variable $\theta$ and such that
\begin{align}
\begin{cases}
(H_0- \lambda)\Psi_\eps = g_\eps \ \ \ \ &\text{ in $\R \times \R_+$}\\
\Psi = 0  \ \ \ \ &\text{ on $\R \times \{ \mu = 0 \}$.}
\end{cases} 
\end{align}
Let us assume that for some $m, n \in \N$ it holds
\begin{align}\label{growth.g}
\limsup_{\eps \to 0} \iii{(1 + \mu)^{-n} g_\eps }_{m, M} \leq 1.
\end{align}
Then
\begin{align}\label{growth.psi}
\limsup_{\eps \to 0} \iii{(1 + \mu)^{-n} \Psi_\eps }_{m +1, M} \lesssim  \biggl(M + \limsup_{\eps \to 0}\fint_{|\theta|<2 M}\int  (1-\mu)^{-2n} |\Psi_\eps(\theta,\mu)|^2 \d\theta \, \d\mu \biggr).
\end{align}

\end{prop}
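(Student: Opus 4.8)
The statement concerns the growth in $\theta$ of a solution to $(H_0-\lambda)\Psi_\eps = g_\eps$ on the half-plane with Dirichlet boundary conditions. The natural strategy is to work on the Fourier side in the variable $\theta$, since the angular variable is the one whose growth we want to control, and the standard principle relating the growth of a function at infinity to the regularity of its Fourier transform is precisely the tool the paper advertises. First I would localize: pick a point $\Theta\in\R$, multiply $\Psi_\eps$ by a cutoff $\chi$ adapted to $\{|\theta-\Theta|<cM\}$ (for suitable $c$ of order $1$), and record that $\chi\Psi_\eps$ solves $(H_0-\lambda)(\chi\Psi_\eps)=\chi g_\eps + [\text{commutator terms}]$, where the commutator involves $\partial_\theta\Psi_\eps$ and $\Psi_\eps$ times bounded derivatives of $\chi$, hence is controlled by the energy estimate (Lemma~\ref{l.energy.estimate} applied as in Step~1 of Proposition~\ref{p.main}) in terms of $\iii{\Psi_\eps}_{m,M}$ on a slightly larger window, plus $\iii{g_\eps}_{m,M}$.

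**Core of the argument.** After this reduction the problem is on all of $\R\times\R_+$ with a right-hand side supported in a window of size $\sim M$. I would take the partial Fourier transform in $\theta$, writing $\hat\Psi_\eps(k,\mu)$, so that each fixed frequency solves the ODE $\mathcal{O}(k)\hat\Psi_\eps(k,\cdot) - \lambda\hat\Psi_\eps(k,\cdot)=\hat g_\eps(k,\cdot)$ on $\R_+$ with Dirichlet condition at $\mu=0$, i.e. $\hat\Psi_\eps(k,\cdot)=(\mathcal{O}(k)-\lambda)^{-1}\hat g_\eps(k,\cdot)$ whenever $\lambda\notin\sigma(\mathcal{O}(k))=\{\nu_n(k)\}$. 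Away from the finitely many resonant frequencies $k$ with $\nu_l(k)=\lambda$, the resolvent $(\mathcal{O}(k)-\lambda)^{-1}$ is bounded (with weights $(1+\mu)^n$, using the decay of the eigenfunctions from Lemma~\ref{l.basic.flat}), and moreover it depends analytically on $k$; its derivatives in $k$ are controlled by Neumann-series / resolvent-identity estimates, with a blow-up of order $\dist(k,\{k_j\})^{-1}$ as $k$ approaches a resonance because $\nu_l$ is analytic with $\nu_l'(k_j)\neq 0$ (Lemma~\ref{l.basic.flat}, \eqref{formula.derivative.nu}, together with \eqref{distance.landau} guaranteeing $\lambda$ is regular). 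The key point is that $\hat g_\eps$ is the Fourier transform of a function supported in a window of size $\sim M$ whose weighted $L^2$ mass is $\lesssim M$ by \eqref{growth.g}, so $\hat g_\eps(k,\cdot)$ is an entire function of $k$ whose $\partial_k^{m}$-derivatives are in $L^2_k$ with norm $\lesssim M^{m+1/2}$ (the $m$ powers of $\theta$ pulled down by the support, matching the polynomial growth of order $m$). Combining: $\hat\Psi_\eps(k,\cdot) = R(k)\hat g_\eps(k,\cdot)$ where $R(k)$ has a simple pole at each $k_j$; one extracts the residue (a multiple of $H_l(k_j,\cdot)e^{i k_j\theta}$ in physical space, the Landau-resonant mode) and writes $\hat\Psi_\eps = \sum_j \frac{c_{j,\eps}}{k-k_j}(\text{smooth in }k) + (\text{regular part})$; a simple pole contributes, after inverse Fourier transform, exactly a factor linear in $\theta$ on top of the growth of the data, which upgrades degree $m$ to degree $m+1$. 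The regular part inherits degree $m$. Quantitatively, $\iii{(1+\mu)^{-n}\Psi_\eps}_{m+1,M}\lesssim M + \iii{(1+\mu)^{-n}g_\eps}_{m,M} + (\text{the local }L^2\text{ term from the commutators})$, which is the claimed bound.

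**Main obstacle.** The delicate part is handling the resonant frequencies $k_j$ cleanly: one must show that the singular part of $R(k)\hat g_\eps(k,\cdot)$, which is genuinely a simple pole in $k$, produces \emph{exactly} one extra power of $\theta$ and no worse, and that the $m$-fold $k$-derivatives needed to detect degree-$(m+1)$ growth interact correctly with the $\dist(k,k_j)^{-1}$ singularity — the product $\partial_k^m[(k-k_j)^{-1}\hat g_\eps]$ is integrable near $k_j$ only because the pole is simple and $\hat g_\eps$ is smooth, so one must be careful to separate the singular and smooth factors before differentiating (e.g. subtract off $\hat g_\eps(k_j,\cdot)$, leaving a removable singularity, and treat the constant residue term by an explicit Fourier computation of $\mathrm{p.v.}\,(k-k_j)^{-1}$). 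A secondary technicality is that the $\limsup_{\eps\to 0}$ in \eqref{growth.g} rather than a uniform bound forces one to run the whole Fourier/resolvent argument with $\eps$-dependent data and pass to the limit at the end; but since $\lambda$ (not $\eps^2\lambda_\eps$) appears in $H_0-\lambda$ here, the resolvent $R(k)$ is $\eps$-independent, so this causes no real difficulty — the $\eps$-dependence enters only through $g_\eps$ and $\Psi_\eps$, and the estimates are linear in them. I would also invoke \eqref{distance.landau} once more to ensure the implicit constants depend only on $N,\delta,n$ as asserted, since $\dist(\lambda,\sigma_{\mathrm{Landau}})\geq\delta$ bounds the resolvent uniformly away from the $k_j$'s and $\delta$ controls $|\nu_l'(k_j)|$ from below via a compactness argument on the relevant branches.
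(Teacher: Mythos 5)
Your overall framework matches the paper's: work on the Fourier side in $\theta$, use the resolvent $(\O(k)-\lambda)^{-1}$ together with its $k$-derivatives (the paper's Lemma~\ref{l.schwarz.estimate.muk}) away from the resonant frequencies, give a separate treatment near $k_j$, and observe that the resolvent is $\eps$-independent. The genuine gap is in how you propose to turn the resonance into a gain of exactly one power of $\theta$. You describe $R(k)\hat g_\eps(k,\cdot)$ as having a ``genuinely simple pole'' at $k_j$, plan to extract a nonzero residue, and handle the residue term by a p.v.\ Fourier computation of $(k-k_j)^{-1}$. But the residue vanishes: since $\Psi_\eps\in H^2(\R\times\R_+)$ solves the equation globally, testing the transformed equation at $k=k_j$ with $H_l(k_j,\cdot)$ forces the compatibility condition $\int_0^{+\infty}\hat g_\eps(k_j,\mu)H_l(k_j,\mu)\,\d\mu=0$ (paper's \eqref{compatiblity.condition}). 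After projecting onto $H_l(k,\cdot)$ the scalar coefficient $G(k)$ therefore vanishes at $k_j$, so $\eta(k)\,G(k)/(\nu_l(k)-\lambda)$ has a \emph{removable} singularity, not a pole. There is nothing for your p.v.\ computation to act on; and if the pole were genuine your plan would fail anyway, since $\partial_k^{m+2}(k-k_j)^{-1}$ is nowhere near $L^2_{loc}$, so you could not control the $k$-derivatives that detect degree-$(m+1)$ growth after inverting the transform.

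The upgrade in degree therefore does not come from a pole; it comes from the antiderivative-like structure of $\frac{G(k)-G(k_j)}{\nu_l(k)-\lambda}$. The paper makes this quantitative in two steps that your sketch does not supply a substitute for: a Riemann--Lebesgue-type reduction (Lemma~\ref{RL}) that replaces the various $k$-dependent factors by their values at $k_j$, leaving the explicit kernel $e^{-(k-k_j)^2}\frac{e^{iky}-e^{ik_jy}}{k-k_j}$; and the exact identity of Lemma~\ref{l.chi}, which rewrites the increment $\Psi_{1,1}(\theta+\Theta,\mu)-\Psi_{1,1}(\theta,\mu)$ as an integral over an interval of length $|\Theta|$ --- that length is literally the extra factor of $|\Theta|$ that bumps $m$ to $m+1$. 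Your statement that the resonance ``produces exactly one extra power of $\theta$ and no worse'' is exactly the nontrivial claim, and the mechanism you offer (residue plus p.v.) does not apply and would not close it. You would need the compatibility condition together with an identity in the spirit of Lemma~\ref{l.chi} (or an equivalent quantitative substitute for the removable-singularity quotient) to complete this step.
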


\bigskip

The following is Liouville-type of statement for solutions to $H_0 -\lambda$ having a certain order of growth in the variable $\theta \in \R$. We give the statement only 
for the cases that we need in the proofs of our main results, namely when the solution grows at most quadratically. It is an easy exercise to extend the statement to 
a general order of integer growth.

\begin{lem}\label{l.identification}
Let $\lambda \in \R_+$ be such that $\lambda \in (2N -1; 2N + 1)$ for some $N \in \N$. We denote by $k_1, \cdots, k_{N}$ the (unique) solutions to
$\nu_1(k_1)= \cdots \nu_{N}(k_{N}) = \lambda$. For $m \in \{1, 2 \}$, let   $\Psi \in H^2_{loc}( \R \times \R_+)$ solve
\begin{align}\label{eq.l.identification}
\begin{cases}
(H- \lambda)\Psi(\mu, \theta) =  \sum_{l=1}^{N}\bigl( \sum_{j=1}^m (i\theta)^{m-1} f_{l,j}(\mu) \bigr) e^{-i k_l \theta} \ \ \ &\text{in $\R \times \R_+$}\\
\Psi(k, 0) = 0  \ \  &\text{on $\R \times \{0\}$.}
\end{cases}
\end{align}
in the sense of distributions, with $f_{j,l} \in L^2(\R_+)$.
\begin{itemize}
 \item  If for some $n \in \N$
\begin{align*}
\iii{(1 +\mu)^{-n}\Psi}_{1,M} \leq 1,
\end{align*}
then, there exist $\{A_{l}\}_{1\leq l\leq N} \subset \C$ such that
\begin{align*}
 \Psi(\theta, \mu) = \sum_{l=1}^{N}\bigl(( A_{l}H_l(k_l, \mu) + i\theta B_{l} H_l(k_l, \mu)  + W_{l}(\mu)) \bigr) e^{-i k_l \theta},
 \end{align*}
where for each $l=1,\cdots, N$ 
\begin{align*}
B_{l}=(\nu'_l(k_l))^{-1} \int_0^{+\infty} H_l(k_l, \mu) f_l(\mu) \d\mu, \ \ W_{l} \perp H_l(k_l, \cdot), \ \ \| W_{l} \|_{L^2} \leq C(\lambda) \biggl( 1 + \|f_l \|_{L^2(\R_+)} \biggr).
\end{align*}

\item  If  for some $n \in \N$
\begin{align}\label{quadratic}
\iii{(1 +\mu)^{-n}\Psi}_{2,M} \leq 1,
\end{align}
then, there exist $\{A_{l}\}_{0\leq j \leq n} \subset \C$ such that
\begin{align*}
 \Psi(\theta, \mu) = \sum_{j=1}^{N}\bigl((A_{l} + i\theta B_{l} - \theta^2 C_{l})H_j(k_j, \mu) + W_{l,1}(\mu) + i\theta W_{l,2}(\mu)) \bigr) e^{-i k_j \theta},
 \end{align*}
where 
\begin{align*}
W_{l,1}, W_{l,2} \perp H_j(k_j, \cdot), \ \  \sum_{j=1}^2\| W_{l,j} \|_{L^2}  \leq C(\lambda) \biggl( 1 +\sum_{j=1}^2 \|f_{l,j} \|_{L^2(\R_+)}\biggr)
\end{align*}
and
\begin{align*}
C_{l}&:=(4 \nu'_l(k_l))^{-1} \int f_{l, 2}(\mu) H_l(k_l, \mu)\d\mu \\
B_{l}&:= (2 \nu'_l(k_l))^{-1} \bigl(\int f_{l,1} H_l(k_l,\mu) \d\mu - 2 \int (\mu + k_l) W_{l,1} H_{l}(k_l, \mu) \d\mu - 2 C_l\bigr).
\end{align*}
\end{itemize}
\end{lem}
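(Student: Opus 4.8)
The plan is to decompose the equation Fourier-mode by Fourier-mode and resonance by resonance, reducing the problem to the one-dimensional boundary value problem for the harmonic oscillator $\O(k)$ on the half-line. Concretely, I would first handle the homogeneous situation by isolating, for each $l=1,\dots,N$, the ``resonant'' plane wave $e^{-ik_l\theta}$, and show that polynomial growth of at most degree $m$ in $\theta$ forces $\Psi$ to be an explicit polynomial in $\theta$ times the eigenfunction $H_l(k_l,\cdot)$, plus lower-order corrections orthogonal to it. The tool for this is Proposition~\ref{proposition.growth}: if I subtract off a candidate ansatz of the stated form (with $A_l,B_l,C_l,W_{l,j}$ to be determined) then the remainder $R$ satisfies $(H_0-\lambda)R=\tilde g$ where $\tilde g$ has strictly smaller polynomial growth in $\theta$ (by the choice of coefficients), together with a growth bound $\iii{(1+\mu)^{-n'}R}_{m,M}\lesssim 1$; iterating Proposition~\ref{proposition.growth} downward in the degree forces $R\equiv 0$ once the inhomogeneity and the growth both reach degree $0$ and the Liouville statement for bounded solutions applies.

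The heart of the matter is therefore the algebraic bookkeeping that determines $A_l,B_l,C_l$ and $W_{l,j}$, which I would carry out by plugging the ansatz
\[
\Psi_{\text{ansatz}}=\sum_{l=1}^N\bigl((A_l+i\theta B_l-\theta^2 C_l)H_l(k_l,\mu)+W_{l,1}(\mu)+i\theta W_{l,2}(\mu)\bigr)e^{-ik_l\theta}
\]
into $H_0=-\partial_\mu^2-(\partial_\theta+i\mu)^2$. Acting with $(\partial_\theta+i\mu)$ on $e^{-ik_l\theta}g(\theta,\mu)$ produces $e^{-ik_l\theta}\bigl((\partial_\theta+i(\mu-k_l))g\bigr)$, so $(H_0-\lambda)$ on the $l$-th block becomes $e^{-ik_l\theta}(\O(-k_l)-\lambda-$ [derivative terms in $\theta$]$)$. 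Collecting powers of $\theta$: the $\theta^2$-coefficient gives $(\O(-k_l)-\lambda)C_lH_l=0$ automatically (since $\nu_l(k_l)=\lambda$), the $\theta^1$-coefficient gives an equation of the form $(\O(-k_l)-\lambda)W_{l,2}=$ (terms involving $C_l$ and $B_l$), and the $\theta^0$-coefficient gives $(\O(-k_l)-\lambda)W_{l,1}=f_{l,1}$-type data plus cross terms. Each of these is a linear ODE $(\O(-k_l)-\lambda)w=h$ on $\R_+$ with Dirichlet condition; since $\lambda=\nu_l(k_l)$ is a simple eigenvalue of $\O(-k_l)$ with eigenfunction $H_l(k_l,\cdot)$, the Fredholm alternative dictates that $h\perp H_l(k_l,\cdot)$, and this solvability condition is precisely what pins down $C_l$ (from the $\theta^2$ equation) and $B_l$ (from the $\theta^1$ equation), giving the stated formulas after using \eqref{formula.derivative.nu} for $\nu_l'(k_l)=2\int(\mu+k_l)|H_l|^2$; the $W_{l,j}$ are then the unique solutions in $\{H_l(k_l,\cdot)\}^\perp$, with $L^2$-bounds from the bounded inverse of $(\O(-k_l)-\lambda)$ on that orthogonal complement, yielding $\|W_{l,j}\|_{L^2}\lesssim 1+\|f_{l,j}\|_{L^2}$.

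For the non-resonant frequencies, i.e.\ the part of $\Psi$ not captured by the finitely many plane waves $e^{-ik_l\theta}$, I would argue that the Fourier transform $\hat\Psi(\cdot,\mu)$ in $\theta$ is supported away from the singular set $\{k:\exists\, l,\ \nu_l(k)=\lambda\}$ except at the points $\{-k_l\}$, and that near those the resolvent $(\O(-k)-\lambda)^{-1}$ has at worst a simple pole; the sublinear/subquadratic growth hypothesis \eqref{quadratic} translates into regularity of $\hat\Psi$ (a distribution of order $\le m$ supported at $\{-k_l\}$ can only be a combination of $\delta$ and its derivatives of order $\le m-1$), which reproduces exactly the polynomial-in-$\theta$ structure. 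This Fourier/resolvent argument is the one that Proposition~\ref{proposition.growth} already packages; I would invoke it rather than redo it. The main obstacle I anticipate is the bookkeeping in the quadratic case where the coefficients interact across orders: the $\theta^1$-equation for $W_{l,2}$ involves $C_l$, and the $\theta^0$-equation for $W_{l,1}$ then feeds back into the solvability condition defining $B_l$ (which is why the stated formula for $B_l$ contains the term $-2\int(\mu+k_l)W_{l,1}H_l\,\d\mu$ and the correction $-2C_l$). Getting these cross-terms consistent — and verifying that after subtracting the full ansatz the remainder genuinely has degree $0$ growth in $\theta$ so that the $m=0$ Liouville step closes the induction — is where care is needed; everything else is routine Fredholm theory for the one-dimensional operator $\O(k)$.
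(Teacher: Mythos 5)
Your overall strategy — Fourier transform in $\theta$, observing that $\widehat{\Psi}(\cdot,\mu)$ is a tempered distribution supported at the finitely many resonant frequencies, using the finite order implied by the growth hypothesis to conclude $\widehat\Psi$ is a finite combination of $\delta$ and its derivatives at those points, and then solving the resulting triangular system of one-dimensional boundary value problems via the Fredholm alternative for $\O(k_l)-\lambda$ — is exactly what the paper does. The triangular bookkeeping you describe (the $\theta^2$-coefficient forces $\alpha_3 \in \ker(\O(k_l)-\lambda)$, the $\theta^1$-compatibility determines $C_l$ via $\nu_l'(k_l)=2\int(\mu+k_l)|H_l|^2$, and the $\theta^0$-compatibility then determines $B_l$ including the cross-term $-2\int(\mu+k_l)W_{l,1}H_l$ and the $-2C_l$ correction) is also what the paper computes. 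So the substance is the same; the paper works directly with $\widehat\Psi$ written as $\sum_j \alpha_j(\mu)\,\delta^{(j)}(k-k_l)$ and derives the system of ODEs for the $\alpha_j$ from the very weak formulation, whereas you phrase it as an ansatz-subtraction, but the algebra is identical.

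There is, however, one genuine misstep that would derail the proof as written: you propose to handle the homogeneous remainder $R = \Psi - \Psi_{\text{ansatz}}$ by ``iterating Proposition~\ref{proposition.growth} downward in the degree'' until a ``Liouville statement for bounded solutions'' applies. Proposition~\ref{proposition.growth} does not do this. It propagates growth \emph{upward}: if the right-hand side grows like degree $m$ in $\theta$, the solution grows at most like degree $m+1$. Applied to $R$ with $g=0$ it would only give $R$ linear growth, not zero growth, and it provides no mechanism to decrease the exponent. The fact that a solution of $(H_0-\lambda)R=0$ with polynomial growth of degree $\le m$ is actually of the form $\sum_l A_l H_l(k_l,\mu)e^{-ik_l\theta}$ (i.e.\ has zero growth) is not automatic and is not contained in Proposition~\ref{proposition.growth}: it is precisely the content of the distributional Liouville argument that the paper carries out — $\widehat R$ is a finite-order distribution supported at $\{k_l\}$, hence a combination of $\delta^{(j)}$, and the Fredholm compatibility conditions then kill the coefficients of $\delta'$ and $\delta''$ because $\nu_l'(k_l)\neq 0$. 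You do sketch this distributional argument in your third paragraph, but you then attribute it to Proposition~\ref{proposition.growth} (``This Fourier/resolvent argument is the one that Proposition~\ref{proposition.growth} already packages; I would invoke it rather than redo it''). That attribution is wrong, and if you actually tried to invoke Proposition~\ref{proposition.growth} in place of the distributional argument the proof would not close. You need to carry out the characterization of $\widehat\Psi$ as a finite sum of point masses and their derivatives yourself (using the growth hypothesis to bound the order of the distribution and the equation to localize its support), and then there is no need for the subtract-and-iterate loop at all: the triangular ODE system determines everything directly. A further small point the paper handles and you omit: one must show that the distributional coefficients $\alpha_j(\mu)$, which a priori only satisfy a weighted $L^2_{\text{loc}}$ bound inherited from $\iii{(1+\mu)^{-n}\Psi}_{m,M}\le 1$, actually lie in $L^2(\R_+)$; this is the role of Lemma~\ref{l.stupid}.
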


\bigskip

The next result is a simple energy estimate that will prove to be useful in many parts of the proofs of Theorems \ref{t.localization}-\ref{t.main}.

\begin{lem}\label{l.energy.estimate}
Let $\Psi, g \in L^2( \T \times \R_+) \cap H^1_{loc}(\T \times \R_+)$ be such that 
\begin{align}
\begin{cases}
(H_0-\lambda) \Psi = g \ \ \ \ &\text{ in $\T \times \R_+$}\\
\Psi = 0  \ \ \ \ &\text{ on $\T \times \{ \mu = 0 \}$}
\end{cases}
\end{align}
in the sense of distributions. Then, for every $n \in \N$ and $r >0$ we have that
\begin{align}\label{iterated.energy}
\int_{|\theta|< r} \int_0^{+\infty} \bigl( |(\partial_\theta + i\mu)^n\Psi(\theta, \mu)|^2 + &|\partial_\mu(\partial_\theta + i \mu)^{n-1}\Psi(\theta, \mu)|^2 \bigr) \d\mu \, \d\theta\notag\\
 &\lesssim \int_{|\theta|< r} \int_0^{+\infty} \bigl(|\Psi|^2 + |(\partial_\theta + i \mu)^{n-1}g(\theta, \mu)|^2\bigr) \d\mu \, \d\theta
\end{align}
\end{lem}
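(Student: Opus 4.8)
The plan is to prove \eqref{iterated.energy} by induction on $n$, with the core being an integration-by-parts identity that commutes the operator $(\partial_\theta + i\mu)$ through the equation. First I would record the commutation structure: writing $D := \partial_\theta + i\mu$, one checks directly that $H_0 = -\partial_\mu^2 - D^2$ and, more importantly, that $\partial_\mu$ and $D$ commute (since $[\partial_\mu, D] = [\partial_\mu, i\mu] = i$... actually $[\partial_\mu, \partial_\theta + i\mu] = i$, so they do \emph{not} commute), so some care is needed — the commutator $[\partial_\mu, D] = i$ is a bounded zeroth-order term, which is harmless. The key point is that $D$ commutes with $H_0$ only up to lower-order terms: $[H_0, D] = [-\partial_\mu^2, D] = -[\partial_\mu^2, i\mu] = -2i\partial_\mu$. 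Hence if $(H_0 - \lambda)\Psi = g$, then $(H_0 - \lambda) D\Psi = Dg + 2i\partial_\mu\Psi$. This is the engine of the induction: differentiating the equation by $D$ costs us one power of $g$ in the $D$-norm plus a term controlled by the $\partial_\mu$-derivative at the previous level.

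Second, I would establish the base case $n=1$: multiply $(H_0-\lambda)\Psi = g$ by $\bar\Psi \eta^2$, where $\eta = \eta(\theta)$ is a cutoff equal to $1$ on $\{|\theta|<r\}$ and supported in $\{|\theta|<2r\}$, and integrate by parts. The boundary term on $\{\mu = 0\}$ vanishes because $\Psi = 0$ there; the terms from $\partial_\mu^2$ and $D^2$ give $\int \eta^2(|\partial_\mu\Psi|^2 + |D\Psi|^2)$, and one absorbs the cross terms hitting $\eta'$ and the $\lambda\|\Psi\|^2$ term using Young's inequality, yielding $\int_{|\theta|<r}(|D\Psi|^2 + |\partial_\mu\Psi|^2) \lesssim \int_{|\theta|<2r}(|\Psi|^2 + |g|^2)$. (A minor subtlety: the statement is over $\T \times \R_+$ where $\theta$ is periodic, so if $r$ exceeds the period one takes $\eta \equiv 1$; the localization is only needed in the non-periodic blow-up applications, but the proof is uniform. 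Alternatively one simply states it with the understanding that cutoffs are used when $\{|\theta|<r\}$ is a proper subset.) For the inductive step, assume the estimate holds for $n-1$; apply it to the function $D\Psi$ in place of $\Psi$, which solves $(H_0-\lambda)(D\Psi) = Dg + 2i\partial_\mu\Psi =: \tilde g$. The induction hypothesis gives
\begin{align*}
\int_{|\theta|<r}\bigl(|D^n\Psi|^2 + |\partial_\mu D^{n-1}\Psi|^2\bigr) \lesssim \int_{|\theta|<r}\bigl(|D\Psi|^2 + |D^{n-2}\tilde g|^2\bigr),
\end{align*}
and $|D^{n-2}\tilde g|^2 \lesssim |D^{n-1}g|^2 + |D^{n-2}\partial_\mu\Psi|^2 = |D^{n-1}g|^2 + |\partial_\mu D^{n-2}\Psi|^2$ (up to harmless $[\partial_\mu,D]=i$ corrections producing only lower-order $D^j\Psi$ terms). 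The term $|\partial_\mu D^{n-2}\Psi|^2$ is exactly what the induction hypothesis at level $n-1$ bounds by $\int(|\Psi|^2 + |D^{n-2}g|^2)$, and $|D\Psi|^2$ is handled by the $n=1$ case; finally $|D^{n-1}g|^2$ dominates the lower-order $|D^jg|^2$ up to the cutoff-derivative terms (which, being $L^2$ in $g$ on the larger ball, are absorbed). Collecting everything gives the claim for $n$.

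The main obstacle I anticipate is bookkeeping the commutator terms cleanly: each time $D$ is pushed past $\partial_\mu$ or past $H_0$ one generates zeroth- and first-order remainders, and one must verify that these are always of strictly lower order in the $D$-filtration so that the induction closes without circularity. Concretely, one should prove the slightly stronger statement that the full bundle $\sum_{j=0}^{n}\int_{|\theta|<r}|D^j\Psi|^2 + \sum_{j=0}^{n-1}\int_{|\theta|<r}|\partial_\mu D^j\Psi|^2$ is controlled by $\int_{|\theta|<2r}(|\Psi|^2 + \sum_{j=0}^{n-1}|D^jg|^2)$ and then note $\sum_{j\le n-1}|D^jg|^2 \lesssim |D^{n-1}g|^2 + |g|^2$ only holds after integration when $g$ has enough regularity — so in fact the right-hand side should honestly be stated with $|D^{n-1}g|$, matching the lemma. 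The other mild technical point is justifying the integrations by parts for $H^1_{loc}$ (not $H^2$) solutions, which is handled by the usual Friedrichs-mollification / density argument, or simply by noting that elliptic regularity upgrades $\Psi$ to $H^{n+1}_{loc}$ once $g \in H^{n-1}_{loc}$, which is the setting in all applications. I would relegate both of these to a one-line remark rather than dwell on them.
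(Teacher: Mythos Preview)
Your proposal is correct and follows essentially the same approach as the paper: establish the case $n=1$ by testing the equation with $\eta^2\bar\Psi$ for a cutoff $\eta=\eta(\theta)$, then iterate by applying $D=\partial_\theta+i\mu$ to the equation and re-running the energy estimate on $D\Psi$. The paper's proof is terser (it does not spell out the commutator $[H_0,D]=-2i\partial_\mu$ or the bookkeeping you describe), but the strategy is identical; your sign on the commutator term should be $Dg-2i\partial_\mu\Psi$ rather than $+2i\partial_\mu\Psi$, though this of course does not affect the estimate.
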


\subsection{Some results on harmonic oscillators on the half-line}
This subsection contains further results on the harmonic oscillators $\O(k)$ defined in \eqref{harmonic.oscillator} of Subsection \ref{s.half.plane.intro}.
For each $k \in \R$, we recall that we denote by $\{ H_l(k; \cdot)\}_{l\in \N} \subset H^1_0(\R_+) \cap H^2(\R_+)$
the eigenfunction associated to the eigenvalues $\{ \nu_l(k)\}_{l\in\N}$ for $\O(k)$ on the half line $\{ \mu > 0\}$, with Dirichlet boundary condition on $\{\mu =0 \}$.
Although many of these results are well-known, we found convenient to tailor them to our needs and give a self-contained proof.

\smallskip

Let $k\in \R$ be fixed. For every $n \in \N$, we define the projection operator  
\begin{align}\label{definition.projection}
P_n(k): L^2(\R_+) \mapsto \mathop{Span}(H_n(k , \cdot)), \ \ \ \ \ \rho \mapsto P_n(k)\rho(\cdot) := \bigl(\int_{0}^{+\infty}\rho(\mu) H_n(k, \mu) \d\mu \bigr) H_n(k, \cdot),
\end{align}
and denote by $P^{\perp}_n(k)$ its orthogonal in $L^2(\R_+)$.

\medskip

The next lemma enumerates some further properties for the eigenfunctions $H_l(k;\cdot)$ and their associated
eigenvalues $\nu_l(k)$ that will be useful in the results of Proposition \ref{proposition.growth}.
\begin{lem}\label{l.eigenfunctions.behaviour}\, Let $k\in \R$ be fixed. Then:
\begin{itemize}
\item[(a)] For every $l \in \N$
\begin{align}\label{asymptotic.nu}
0<\nu_l'(k) \leq C_l |k|,  \ \ \ \ \ \nu_l(k) = k^2 + \Gamma_l k^{\frac 2 3} + O(k^{-\frac 2 3}) \ \ \ \text{for $k \to +\infty$,}
\end{align}
with $\Gamma_l \neq \Gamma_i$ for all $i,l \in \N$ with $i \neq l$ and where the constant in the last error term depends on $l$.
\item[(b)]  For every $m, n \in \N$ there exists a constant $C=C(m,n, l)< +\infty$ and an exponent $0\leq \alpha(m,n)< +\infty$ such that
\begin{align}
\bigl(\int (1+  \mu)^{2n} |\partial_k^m H_l(k, \mu)|^2 \d \mu \bigr)^{\frac 1 2} \leq C(1+ |k|)^{\alpha}. 
\end{align}
\end{itemize}
\end{lem}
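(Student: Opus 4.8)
The plan is to analyse $\O(k)=-\partial_\mu^2+(\mu+k)^2$ on $\R_+$ (Dirichlet at $\mu=0$) as $k\to+\infty$: then the potential is increasing on $\R_+$ with minimum $k^2$ at the boundary, and the interplay of the Dirichlet condition with the locally linear growth $2k\mu$ squeezes the eigenfunctions to the scale $\mu\sim k^{-1/3}$. I would change variables $\mu=(2k)^{-1/3}t$ and write $\nu_l(k)=k^2+(2k)^{2/3}\Lambda$; a direct computation turns \eqref{eigenvalue.oscillator} into
\[
-\partial_t^2\widetilde H+\bigl(t+\eps_k t^2\bigr)\widetilde H=\Lambda\,\widetilde H\ \text{ on }\R_+,\quad \widetilde H(0)=0,\qquad \eps_k:=2^{-4/3}k^{-4/3}\to0 .
\]
Thus, after rescaling and an affine shift of the spectral parameter, $\O(k)$ is a perturbation of the Airy operator $\mathcal A:=-\partial_t^2+t$ on $L^2(\R_+)$ with Dirichlet condition at $0$, whose $l$-th eigenvalue is $|a_l|$, the modulus of the $l$-th (negative, simple) zero of $\operatorname{Ai}$, with eigenfunction $\phi_l$ a rescaled restriction of $\operatorname{Ai}$ (hence super-exponentially decaying, so $\int t^2|\phi_l|^2<\infty$). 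Since $\eps_k t^2$ is then $\mathcal A$-bounded, analytic perturbation theory for the isolated simple eigenvalues of $\mathcal A$ gives $\Lambda_l(\eps_k)=|a_l|+O(\eps_k)$, whence
\[
\nu_l(k)=k^2+\Gamma_l k^{2/3}+O(k^{-2/3}),\qquad \Gamma_l:=2^{2/3}|a_l| ,
\]
and the $\Gamma_l$ are pairwise distinct because the zeros of $\operatorname{Ai}$ are. For the bound on $\nu_l'$, I would use \eqref{formula.derivative.nu}: for $k>0$ one has $(\mu+k)>0$ on $\R_+$, so $\nu_l'(k)=2\int_{\R_+}(\mu+k)|H_l(k,\mu)|^2\,\d\mu>0$, and writing the right side as $2k+2\int_{\R_+}\mu|H_l(k,\mu)|^2\,\d\mu$, the last integral is $\lesssim\bigl(\int_{\R_+}\mu^2|H_l|^2\bigr)^{1/2}\lesssim1$ by the $m=0$ case of (b) below (in fact it is $O(k^{-1/3})$ by the concentration above), giving $\nu_l'(k)\le C_l k$ for $k$ large.

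\textbf{Part (b).} Here I would argue by a double induction, the outer one on the order $m$ of the $k$-derivative and the inner one on the weight, and I may restrict to $|k|$ large, since for $k$ in a bounded interval all quantities are continuous in $k$ by analyticity of the simple branches $\nu_l$ and their eigenprojections. For $m=0$ the key is a weighted energy recursion: testing $(\O(k)-\nu_l(k))H_l=0$ against $(\mu+k)^{2j}H_l$ and integrating by parts twice — the boundary terms at $\mu=0$ vanish since $H_l(k,0)=0$, those at $+\infty$ by the (standard Agmon) super-exponential decay which also makes all moments below finite — one gets
\begin{align*}
\int_{\R_+}(\mu+k)^{2j+2}|H_l|^2\,\d\mu
&=\nu_l(k)\int_{\R_+}(\mu+k)^{2j}|H_l|^2\,\d\mu+j(2j-1)\int_{\R_+}(\mu+k)^{2j-2}|H_l|^2\,\d\mu\\
&\quad-\int_{\R_+}(\mu+k)^{2j}|\partial_\mu H_l|^2\,\d\mu .
\end{align*}
Since $\int_{\R_+}|H_l|^2=1$ and $\nu_l(k)\lesssim(1+|k|)^2$ (by part (a) for $k$ large positive, by $\nu_l(k)\to2l+1$ from Lemma \ref{l.basic.flat} for $k$ large negative), induction on $j$ gives $\int_{\R_+}(\mu+k)^{2j}|H_l|^2\,\d\mu\lesssim(1+|k|)^{2j}$, and together with $(1+\mu)^{2n}\le(1+|k|)^{2n}(1+|\mu+k|)^{2n}$ this yields (b) for $m=0$ (the precise power of $|k|$ being irrelevant). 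For the inductive step, differentiating the eigenvalue equation $\O(k)H_l=\nu_l(k)H_l$ $m$ times in $k$ gives $(\O(k)-\nu_l(k))\partial_k^m H_l=G_m$, where (the only $k$-dependence of $\O(k)$ being in $(\mu+k)^2$, whose $k$-derivatives are $2(\mu+k)$, $2$, $0,\dots$) $G_m$ is a finite combination of $\nu_l^{(j)}(k)\,\partial_k^{m-j}H_l$ with $1\le j\le m$, of $(\mu+k)\,\partial_k^{m-1}H_l$, and of $\partial_k^{m-2}H_l$; the derivatives $\nu_l^{(j)}(k)$ are bounded polynomially in $|k|$ by differentiating \eqref{formula.derivative.nu} and inserting the weighted bounds already available for $\partial_k^{<j}H_l$. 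Writing $\partial_k^m H_l=c_m(k)H_l+R_m$ with $R_m\perp H_l$ and $c_m$ determined (hence polynomially bounded) by differentiating $\int_{\R_+}|H_l|^2=1$, one has $R_m=(\O(k)-\nu_l(k))^{-1}\big|_{H_l^\perp}G_m$, the compatibility $\int_{\R_+}H_l G_m=0$ being automatic from self-adjointness. The norm of this inverse equals $1/\operatorname{dist}\bigl(\nu_l(k),\sigma(\O(k))\setminus\{\nu_l(k)\}\bigr)$, which is bounded uniformly in $k\in\R$: the gap $\nu_{l\pm1}(k)-\nu_l(k)$ behaves like $(\Gamma_{l\pm1}-\Gamma_l)k^{2/3}\to+\infty$ as $k\to+\infty$ by part (a), tends to $2$ as $k\to-\infty$ by Lemma \ref{l.basic.flat}, and is bounded below on compacta by simplicity and analyticity of the branches. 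This gives an $L^2$ bound on $R_m$, and then the very same weighted recursion as for $m=0$ — now carrying the inhomogeneity $G_m$, whose weighted moments are controlled by the outer induction hypothesis (via Young's inequality on the term $\int_{\R_+}(\mu+k)^{2j}R_m G_m$) — propagates it to all weights. Unwinding $\partial_k^m H_l=c_m H_l+R_m$ closes the induction.

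The hard part will be part (a): the correction term $\Gamma_l k^{2/3}$ with error $O(k^{-2/3})$ is a genuine turning-point statement, and the only non-routine point is making rigorous the identification of $\Gamma_l$ with a zero of $\operatorname{Ai}$ — i.e. controlling the unbounded perturbation $\eps_k t^2$ relative to $\mathcal A$ and deducing the $O(\eps_k)$ eigenvalue error. Everything in (b) is essentially bookkeeping, but it relies crucially on the uniform lower bound for the spectral gap of $\O(k)$, which is itself a consequence of (a).
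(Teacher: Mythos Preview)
Your proposal is correct and follows essentially the same strategy as the paper: for (a) you both perform the same rescaling $\mu\mapsto (2k)^{-1/3}\mu$ to reduce to a perturbed Airy problem on $\R_+$ with Dirichlet condition, identifying $\Gamma_l$ with (a multiple of) the $l$-th Airy zero; for (b) you both run a weighted energy recursion for $m=0$ and then induct on $m$ by differentiating the eigenvalue equation, splitting $\partial_k^m H_l$ into its $H_l$-component (controlled via differentiating $\|H_l\|=1$) and its orthogonal part (controlled via the inverse of $\O(k)-\nu_l(k)$ on $H_l^\perp$, whose norm is governed by the spectral gap). The only cosmetic differences are that the paper invokes the Min-Max theorem where you invoke analytic perturbation theory for the Airy limit, and that the paper packages the weighted resolvent estimates into a separate lemma (their Lemma~\ref{l.schwarz.estimate.muk}) while you write them out inline; your explicit remark that the uniform spectral-gap lower bound is what makes the induction close is exactly what the paper uses implicitly when it says the constant from that lemma ``depends algebraically on $D_l(k,\nu_l(k))^{-1}$'' and then appeals to part (a).
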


\bigskip

The previous estimates rely on the following: 
\begin{lem}\label{l.schwarz.estimate.muk}
Let $\lambda \in\R_+$ be fixed and let $\rho \in \mathcal{S}(\R \times \R_+)$. For every $k\in \R$ and $l\in \N$ let 
$$
D(k, \lambda):=\mathop{dist}(\sigma(\O(k)), \lambda), \ \ \ D_l(k, \lambda)= \mathop{dist}(\sigma(\O(k)) \backslash \{ \nu_l(k) \}, \lambda).
$$
Let $w : \R_+ \to \R$ be any weight such that
\begin{align}\label{linear.weight}
 \|\partial^{j}w \|_{L^\infty(\R_+)} \leq 1, \ \ \ \ \forall j \in \N.
\end{align}
Then 
\begin{itemize}
 \item[(a)] For every $k\in \R$ such that $D(k, \lambda)> 0$, the solution $\eta(k , \cdot) \in L^2(\R_+)$ to
\begin{align*}
\begin{cases}
(\O(k) - \lambda) \eta(k, \cdot) = \rho(k, \cdot) \ \ \ \text{for $\{ \mu > 0\}$}\\
\eta(k, 0) = 0
\end{cases}
\end{align*}
satisfies for any $\alpha \in \N$ and $\beta \in \N \cup \{0\}$
\begin{align}\label{schwarz.estimate.0.1}
\|w^\alpha \partial_\mu \partial_k^\beta \eta(k, \cdot) \|^2_{L^2} + \|w^\alpha& \partial_k^\beta \eta(k, \cdot) \|^2_{L^2} + \|(\mu +k) w^\alpha \partial_k^\beta \eta(k, \cdot) \|^2_{L^2}\notag\\
 &\leq C \sup_{1 \leq n \leq \beta}(\|\partial_k^n \rho(k, \cdot)\|^2_{L^2}+ \| w^\alpha \partial_k^n\rho(k, \cdot)\|^2_{L^2}).
\end{align}
The constant $C=C(\alpha, \lambda, k)$ depends algebraically on $\lambda$ and $D(k, \lambda)^{-1}$.

\medskip

\item[(b)] For $k\in \R$ such that  $D_l(k, \lambda)>0$  for some $l\in \N$, the solution $\eta(k , \cdot) \in L^2(\R_+)$ to
\begin{align*}
\begin{cases}
(\O(k) - \lambda) \eta(k, \cdot) = P_{l}(k)\rho(k, \cdot) \ \ \ \text{for $\{ \mu > 0\}$}\\
\eta(k, 0) = 0
\end{cases}
\end{align*}
with $\eta(k, \cdot) = P_{l}(k)\eta(k, \cdot)$ satisfies for all $\alpha, \beta \in \N$
\begin{align}\label{schwarz.estimate.0.2} 
\|&w^\alpha \partial_k^\beta\eta(k, \cdot) \|^2_{L^2} +  \|(\mu +k) w^\alpha \partial_k^\beta \eta(k, \cdot) \|^2_{L^2}\\
 &\quad\quad \leq C \sup_{1 \leq n,m \leq \beta}\biggl( \|\partial_k^m\rho \|^2 + \| \partial_k^n H_l(k, \cdot)\|^2 + \|w^\alpha\partial_k^m \rho(k, \cdot)\|^2 + \|w^\alpha \partial_k^n H_l(k, \cdot)\|^2 \biggr).
\end{align}
Here, the constant $C=C(\alpha, \beta, k, \lambda)$ may be chosen as in (a), with $D(k,\lambda)$ substituted by $D_l(k, \lambda)$. 
\end{itemize}
\end{lem}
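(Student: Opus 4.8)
The plan is to reduce the whole lemma to three ingredients: (i) the self-adjointness of $\O(k)$ on $L^2(\R_+)$ with Dirichlet boundary conditions, which gives $\|(\O(k)-\lambda)^{-1}\|_{L^2\to L^2}=D(k,\lambda)^{-1}$; (ii) a family of weighted elliptic estimates obtained by testing the equation against $\overline{w^{2\alpha}\eta}$; and (iii) an induction on the number $\beta$ of derivatives in $k$, built on the fact that $\O(k)$ depends on $k$ only through $(\mu+k)^2$, so $\partial_k\O(k)=2(\mu+k)$, $\partial_k^2\O(k)=2$ and $\partial_k^j\O(k)=0$ for $j\ge 3$. Part (b) will follow from the same scheme applied to the restriction of $\O(k)-\lambda$ to the spectral subspace complementary to $\mathrm{span}\,H_l(k,\cdot)$, on which the inverse has norm $D_l(k,\lambda)^{-1}$; the one new feature there is that differentiating the spectral projection $P_l(k)$ in $k$ produces terms involving $\partial_k^n H_l(k,\cdot)$, which is precisely why those norms enter the right-hand side of \eqref{schwarz.estimate.0.2}.

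I would first treat the case $\beta=0$ of (a). From (i), $\|\eta(k,\cdot)\|\le D(k,\lambda)^{-1}\|\rho(k,\cdot)\|$. Testing $(\O(k)-\lambda)\eta=\rho$ against $\bar\eta$ and integrating by parts — the boundary terms vanish by the Dirichlet condition at $\mu=0$ and the decay of $\eta$ at $+\infty$ — gives $\|\partial_\mu\eta\|^2+\|(\mu+k)\eta\|^2=\lambda\|\eta\|^2+\Re\langle\rho,\eta\rangle$, hence $\|\partial_\mu\eta\|^2+\|(\mu+k)\eta\|^2\lesssim(1+\lambda)(1+D(k,\lambda)^{-2})\|\rho\|^2$. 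For the weighted bound, observe that since $|w|\le 1$ and all derivatives of $w$ are bounded by $1$, every derivative of $w^\alpha$ is bounded by a constant depending only on $\alpha$, and $w^\alpha\eta$ solves $(\O(k)-\lambda)(w^\alpha\eta)=w^\alpha\rho-(\partial_\mu^2 w^\alpha)\eta-2(\partial_\mu w^\alpha)\partial_\mu\eta$, whose right-hand side we have just controlled in $L^2$ with no division by $w$. Testing this equation against $\overline{w^\alpha\eta}$ and integrating by parts yields control of $\|\partial_\mu(w^\alpha\eta)\|$ and $\|(\mu+k)w^\alpha\eta\|$, and then $w^\alpha\partial_\mu\eta=\partial_\mu(w^\alpha\eta)-(\partial_\mu w^\alpha)\eta$ recovers the three quantities on the left of \eqref{schwarz.estimate.0.1}, the constant being polynomial in $\lambda$ and $D(k,\lambda)^{-1}$ (and depending also on $\alpha$ and $k$).

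For the inductive step, Leibniz's rule applied to $\partial_k^\beta\big[(\O(k)-\lambda)\eta\big]=\partial_k^\beta\rho$ produces
\[
(\O(k)-\lambda)\,\partial_k^\beta\eta=\partial_k^\beta\rho-2\beta\,(\mu+k)\,\partial_k^{\beta-1}\eta-\beta(\beta-1)\,\partial_k^{\beta-2}\eta=:g_\beta .
\]
Applying the induction hypothesis once with the weight $w$ and once with the (admissible) weight $w\equiv 1$, the $L^2$- and $w^\alpha$-weighted norms of $g_\beta$ are $\lesssim\sup_{n\le\beta}\big(\|\partial_k^n\rho\|+\|w^\alpha\partial_k^n\rho\|\big)$, the key point being that the induction already supplies a bound for $\|(\mu+k)w^\alpha\partial_k^{\beta-1}\eta\|$, the only term that grows in $\mu$. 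Feeding $g_\beta$ into the $\beta=0$ argument applied to $\partial_k^\beta\eta$ in place of $\eta$ then closes the induction; the constant acquires one more polynomial factor in $\lambda$ and $D(k,\lambda)^{-1}$ at each step, so it remains algebraic. This proves \eqref{schwarz.estimate.0.1}.

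Finally, for (b) I would write $\eta=P_l^{\perp}(k)\eta$, $(\O(k)-\lambda)\eta=P_l^{\perp}(k)\rho$: the base case is identical to (a) with $D$ replaced by $D_l$, since $\O(k)-\lambda$ is invertible on $\mathrm{range}\,P_l^{\perp}(k)$ with norm $D_l(k,\lambda)^{-1}$. Differentiating the orthogonality constraint $P_l(k)\eta=0$ gives $P_l(k)\partial_k\eta=-\langle\eta,\partial_k H_l(k,\cdot)\rangle H_l(k,\cdot)$, so the part of $\partial_k\eta$ not in $\mathrm{range}\,P_l^{\perp}(k)$ is explicit in terms of $\eta$ and $\partial_k H_l$; substituting this into the differentiated equation gives an equation of the same type for $P_l^{\perp}(k)\partial_k\eta$ whose right-hand side involves, besides $\partial_k\rho$ and $(\mu+k)\eta$, the functions $\partial_k^{\le1}H_l(k,\cdot)$. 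Iterating, together with Leibniz on the projections, produces the $\|\partial_k^n H_l\|$ and $\|w^\alpha\partial_k^n H_l\|$ terms on the right of \eqref{schwarz.estimate.0.2}; the rest is the bookkeeping of (a). I expect the main obstacle to be organizing this induction so that at each stage the factor $(\mu+k)$ created by differentiating in $k$ is reabsorbed by the $(\mu+k)$-weighted part of the previous estimate while the dependence of the constants on $\lambda$ and on the resolvent gap stays algebraic — and, in part (b), commuting $\partial_k$ past the spectral projection $P_l(k)$ without losing track of the gap $D_l$.
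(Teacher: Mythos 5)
Your proposal follows essentially the same route as the paper: resolvent plus energy estimate for the base case, induction on the weight exponent $\alpha$ via the equation for $w^\alpha\eta$, induction on $\beta$ using $\partial_k\O(k)=2(\mu+k)$ and $\partial_k^2\O(k)=2$ so that the $(\mu+k)$-weighted bound of the previous step absorbs the new linear-growth term, and in (b) the restriction of the resolvent to $\operatorname{range}P_l^\perp(k)$ together with differentiation of the orthogonality constraint $\langle\eta,H_l(k,\cdot)\rangle=0$ to produce the $\partial_k^n H_l$ terms. The only cosmetic differences are that you test directly against $\overline{w^{2\alpha}\eta}$ rather than first writing the commuted equation for $w^\alpha\eta$, and your Leibniz coefficients $2\beta$, $\beta(\beta-1)$ are stated more carefully than in the paper, but these do not change the argument.
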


\bigskip

The next lemma is a simple consequence of the energy estimate. 

\begin{lem}\label{l.stupid}
Let $f \in L^2(\R_+)$. Assume that $\alpha \in H^1_{loc}(\R_+)$ solves (in the sense of distributions)
\begin{align}
\begin{cases}
(\O(k) - \lambda) \alpha = f \ \ \ \text{in $\{ \mu > 0 \}$}\\
\alpha(0)=0
\end{cases}
\end{align}
and that
\begin{align}\label{algebraic.growth}
\int_0^{+\infty} (1 + \mu)^{-2n}\bigl(|\alpha|^2 + |\partial_\mu \alpha |^2 \bigr)< +\infty.
\end{align}
Then, $\alpha \in L^2(\R_+)$.  Furthermore, if $f$ is such that $(1 + \mu) f \in L^2(\R_+)$, then $\alpha \in H^1(\R_+)$.
\end{lem}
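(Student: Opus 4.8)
\medskip

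The plan is to argue in two steps: first establish the $L^2$-integrability of $\alpha$ itself from the polynomial control \eqref{algebraic.growth} and the equation, and then, under the stronger hypothesis on $f$, upgrade to $\alpha \in H^1(\R_+)$. The key observation is that the operator $\O(k) - \lambda = -\partial_\mu^2 + (\mu+k)^2 - \lambda$ has a confining potential $(\mu+k)^2$ that grows quadratically at infinity, so any solution with at most polynomial growth must in fact decay; this is the classical mechanism forcing $L^2$-membership for the harmonic oscillator on a half-line.

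\medskip

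For the first step I would test the equation against $w_R^2\,\alpha$, where $w_R(\mu) = \chi(\mu/R)(1+\mu)^{-n}$ and $\chi$ is a standard cutoff (or, more simply, against $(1+\mu)^{-2n}\chi_R^2\,\alpha$ for a cutoff $\chi_R$ supported in $\{\mu < 2R\}$, $\equiv 1$ on $\{\mu<R\}$). Integration by parts yields
\begin{align*}
\int_0^{+\infty} w_R^2\bigl(|\partial_\mu\alpha|^2 + ((\mu+k)^2-\lambda)|\alpha|^2\bigr)
&= \int_0^{+\infty} w_R^2 f\,\bar\alpha - \int_0^{+\infty}(\partial_\mu w_R^2)\,\bar\alpha\,\partial_\mu\alpha,
\end{align*}
where the boundary term at $\mu=0$ vanishes because $\alpha(0)=0$. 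For $\mu$ beyond a fixed $\mu_0 = \mu_0(k,\lambda)$ one has $(\mu+k)^2 - \lambda \geq 1$, so after absorbing the contribution of the region $\{\mu<\mu_0\}$ (finite by \eqref{algebraic.growth}) the left side controls $\int w_R^2 |\alpha|^2$ up to constants. On the right side, the term $\int w_R^2 f\bar\alpha$ is bounded using Cauchy--Schwarz by $\|f\|_{L^2}\bigl(\int w_R^2|\alpha|^2\bigr)^{1/2}$ (here $w_R \le (1+\mu)^{-n} \le 1$ is used), which is absorbed by Young's inequality; the commutator term $\int(\partial_\mu w_R^2)\bar\alpha\,\partial_\mu\alpha$ is bounded, since $|\partial_\mu w_R| \lesssim w_R$ away from the cutoff scale and $|\partial_\mu w_R| \lesssim R^{-1}(1+\mu)^{-n} \lesssim R^{-1} w_R$ on the annulus $\{R<\mu<2R\}$, by $\int w_R^2(|\alpha|^2+|\partial_\mu\alpha|^2)$ which is controlled by \eqref{algebraic.growth} uniformly in $R$ and whose cutoff-annulus part vanishes as $R\to\infty$. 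Letting $R\to+\infty$ and using monotone convergence gives $\int_0^{+\infty}(1+\mu)^{-2n}(|\partial_\mu\alpha|^2+(\mu+k)^2|\alpha|^2)<+\infty$ together with $\alpha\in L^2(\R_+)$; in fact one gets the a priori bound $\|\alpha\|_{L^2}^2 + \|(\mu+k)\alpha\|_{L^2}^2 + \|\partial_\mu\alpha\|_{L^2}^2 \lesssim \|f\|_{L^2}^2 + C(k,\lambda,n)\int_0^{\mu_0}(|\alpha|^2+|\partial_\mu\alpha|^2)$, the last integral being finite by \eqref{algebraic.growth}.

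\medskip

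For the second step, now knowing $\alpha\in L^2$ and $(\mu+k)\alpha \in L^2$ and $\partial_\mu\alpha\in L^2$, I test the equation against $\alpha$ itself (no cutoff needed, all terms now integrable) to get $\int|\partial_\mu\alpha|^2 + \int(\mu+k)^2|\alpha|^2 = \lambda\|\alpha\|_{L^2}^2 + \int f\bar\alpha \le \lambda\|\alpha\|_{L^2}^2 + \|(1+\mu)f\|_{L^2}\|(1+\mu)^{-1}\alpha\|_{L^2}$, which is finite; this already shows $\alpha \in H^1(\R_+)$ since $\|\partial_\mu\alpha\|_{L^2} < +\infty$ (and indeed $\alpha \in L^2$ from Step 1). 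I expect the main obstacle to be purely bookkeeping: justifying the integration by parts with the weight $w_R$ (in particular that the boundary term at infinity vanishes and that $\alpha \in H^1_{loc}$ suffices to run the computation), and tracking that the commutator term from $\partial_\mu w_R$ genuinely stays bounded uniformly in $R$ — this is where hypothesis \eqref{algebraic.growth} on $\partial_\mu\alpha$, not just on $\alpha$, is essential. Everything else is a routine absorption argument exploiting the coercivity of $(\mu+k)^2-\lambda$ at infinity.
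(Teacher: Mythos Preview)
Your energy estimate with the weight $w_R^2 = \chi_R^2(1+\mu)^{-2n}$ is correct and yields
\[
\int_0^{+\infty}(1+\mu)^{-2n}\bigl(|\partial_\mu\alpha|^2 + (\mu+k)^2|\alpha|^2\bigr) < +\infty,
\]
but the jump from this to ``$\alpha\in L^2(\R_+)$'' is not justified when $n\ge 2$. The gain from the confining potential is exactly one power of $(1+\mu)$: since $(\mu+k)^2(1+\mu)^{-2n}\sim (1+\mu)^{-2(n-1)}$ for large $\mu$, what you have actually shown is $(1+\mu)^{-(n-1)}\alpha\in L^2$, not $\alpha\in L^2$. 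Your claimed a~priori bound $\|\alpha\|_{L^2}^2+\|(\mu+k)\alpha\|_{L^2}^2+\|\partial_\mu\alpha\|_{L^2}^2\lesssim\cdots$ would follow from the \emph{unweighted} energy identity (test function $\chi_R^2\alpha$), but then the commutator term $\int|\chi_R'|^2|\alpha|^2\lesssim R^{-2}\int_{R<\mu<2R}|\alpha|^2$ is only controlled by $R^{2n-2}$ via \eqref{algebraic.growth}, which diverges for $n\ge 2$.

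The fix, and this is precisely what the paper does, is to \emph{iterate}: having obtained $(1+\mu)^{-(n-1)}\alpha\in L^2$ (and, by the same estimate, $(1+\mu)^{-(n-1)}\partial_\mu\alpha\in L^2$), you repeat the energy estimate now with weight $(1+\mu)^{-2(n-1)}$ to get $(1+\mu)^{-(n-2)}\alpha\in L^2$, and so on $n$ times until the weight disappears. Each step is exactly your computation with $n$ replaced by $n-1,n-2,\ldots$; the point is that the quadratic potential $(\mu+k)^2$ buys you one power of decay per pass, not all $n$ at once. Your Step~2 is then fine once Step~1 has been repaired in this way.
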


\bigskip

\subsection{Proofs}
\begin{proof}[Proof of Proposition \ref{proposition.growth}]
With no loss of generality, we assume that $M=1$ and thus simply denote $\iii{\cdot }_{m,1}$ by $\iii{\cdot}_{m}$. In addition, we give the proof only in the cases $N=1$ and that $n=0$. For what concerns the general case, we comment along the proof the few parts which do require a non-trivial adaptation.

\bigskip
{

The proof of the proposition relies on several applications of Lemma \ref{RL} and the easy inequality 
\begin{align}\label{general.inequality.1}
|\int \int_0^{+\infty} g(\theta, \mu) f(\theta, \mu) \d\mu \, \d\theta |& \lesssim  \iii{ g}_{m}  \biggl( \int \int_0^{+\infty}(|\hat f|^2 + |\partial_k^{m+1}\hat f|^2) \d\mu \d k \biggr)^{\frac 1 2}
\end{align}
for $g, f \in L^2(\R \times \R_+)$, where $\hat f= \hat f(k, \mu)$ denotes the Fourier transform of $f$ in the variable $\theta \in \R$.
This inequality is an easy consequence of Cauchy-Schwarz's inequality with the weights $(1 + |\theta|)^{m+1}$ and $(1 + |\theta|)^{-(m+1)}$, the definition of the norm $\iii{g}_m$ and the standard identity  $\partial_k \hat f = i\theta f$ .}

\bigskip

Thanks to \eqref{distance.landau}, we  may denote by $k_1\in \R$ the unique solution to \eqref{roots}. We remark that, again by \eqref{distance.landau}, it holds  $k_1 \neq 0$. We argue that
\begin{align}\label{growth.psi.a}
\limsup_{\eps \to 0} \, \sup_{|\Theta| > 1} |\Theta |^{m+1}\biggl(\fint_{|\theta- \Theta| < 1} \int_0^{+\infty} |\Psi_\eps(\theta, \mu)|^2 \d\mu \d\theta \biggr)^{\frac 1 2} \lesssim 1.
\end{align} 
From this, the statement of Proposition \ref{proposition.growth} immediately follows.

\bigskip

Let us denote by $\hat g_\eps= \hat g_\eps(k ,\mu)$  and $\hat \Psi_\eps= \hat \Psi_\eps(k, \mu)$ the Fourier transforms in the variable $\theta$ of $g_\eps$ and $\Psi_\eps$. Note that by the assumptions on $g_\eps$ and $\Psi_\eps$ we have that also  $\hat \Psi_\eps$ and $\hat g_\eps$ are in $H^2( \R \times \R_+)$ and therefore satisfy for every $k\in \R$ the one-dimensional boundary value problem
\begin{align}\label{equation.psi.hat.full}
\begin{cases}
(\O(k)- \lambda_0)\hat \Psi_{\eps}(k, \cdot) = \hat g_\eps(k, \cdot) \ \ \ \text{ for $\mu > 0$}\\
\hat\Psi_{\eps}(k, 0) = 0.
\end{cases}
\end{align}
In addition, by testing the equation above when $k= k_1$ with $H_1(k_1, \mu)$, we have the compatibility condition
\begin{align}\label{compatiblity.condition}
 \int_0^{+\infty} \hat g_\eps(k_1, \mu) H_1(k_1, \mu) \d\mu =  \int_0^{+\infty} \int g_\eps(\theta, \mu) H_1(k_1,\mu) e^{i k_1\theta} \d\theta \d\mu = 0.
\end{align}
Throughout this proof we skip the index $\eps$ in the notation for $\Psi_\eps$ and $g_\eps$ bearing in mind that all the estimates are independent from $\eps > 0$.

\bigskip

Let $\nu=\nu(k)$ be a (smooth) cut-off function for the set $\{|k - k_1| < 1\}$ in $\{|k-k_1| < 2\}$. By recalling the definition \eqref{definition.projection} of the projection $P_1(k)$, we decompose $\Psi =\Psi_{1,1} + \Psi_{1,2} + \Psi_{2}$ with 
\begin{align*}
\hat \Psi_{1,1}:= \eta P_1(k)\hat \Psi, \ \ \ \hat \Psi_{j,2}: =\eta P^\perp_1(k)\hat \Psi, \ \ \  \hat \Psi_{2} := \hat \Psi - (\hat \Psi_{1,1} + \hat \Psi_{1,2})= (1-\eta) \hat \Psi.
\end{align*}

\bigskip

By \eqref{equation.psi.hat.full}, the fact that both $P_1$ and $\eta$ commute with the operator $\O(k)$ and Lemma \ref{l.basic.flat}, we may rewrite
\begin{align}\label{psi.1.1}
&\hat\Psi_{1,1} \stackrel{\eqref{compatiblity.condition}}{=} \eta  \frac{P_1(k)\hat g- P_1(k_1)\hat g}{\nu_1(k)-\lambda}.
\end{align}
In addition, for every $k \in \R$ we also have that
\begin{align}\label{psi.1.2}
\begin{cases}
(\O(k) - \lambda_0)\hat \Psi_{1,2} = \eta P_1^\perp(K_1)\hat g \\
\hat\Psi_{1,2} = 0
\end{cases},\ \ \ \ \ \int \hat\Psi_{1, 2}(k ,\mu) H_1(k ,\mu) =0
\end{align}
and
\begin{align}\label{psi.2}
\begin{cases}
(\O(k) - \lambda_0)\hat \Psi_{2} = (1-\eta) \hat g\\
\hat\Psi_{2} = 0.
\end{cases}
\end{align}

In next paragraphs, we treat the previous terms separately and show \eqref{growth.psi.a} and \eqref{growth.psi} for each of the them.  

\bigskip

\noindent{\em Term $\Psi_{1,2}$.\, }  We argue by duality that $\Psi_2$ satisfies \eqref{growth.psi.a} and that 
\begin{align}\label{psi.1.2.in.zero}
\fint_{|\theta| < 2} |\Psi_{1,2}|^2 \lesssim 1.
\end{align}
 We claim, indeed that for every $\rho \in C^\infty_0(\R \times \R_+)$ it holds that
\begin{align}\label{estimate.test.function}
|\int_0^{+\infty} \int \Psi_{1,2}(\theta, \mu) \rho(\theta, \mu) \d\theta \, \d\mu |\lesssim \biggl( \int_0^{+\infty} \int (1 + |\theta|)^{2(m+1)} |\rho(\theta, \mu)|^2 \d\theta  \, \d\mu \biggr)^{\frac 1 2}.
\end{align}
This implies in particular, that if for any $\Theta \in \R$, supp$(\rho) \subset \{ |\theta - \Theta| < 1 \} \times \R_+$, then
\begin{align*}
|\int_0^{+\infty} \int \Psi_{1,2}(\theta, \mu) \rho(\theta, \mu) \d\theta \, \d\mu | \lesssim ( 1 + |\Theta|)^{m+1}  \biggl(\int_0^{+\infty} \int |\rho(\theta, \mu)|^2 \d\theta\biggr)^{\frac 1 2},
\end{align*}
which yields \eqref{psi.1.2.in.zero} and \eqref{growth.psi.a} for $\Psi_{1,2}$ by duality.

\bigskip

We argue \eqref{estimate.test.function} in the following way: For each $\rho \in C^\infty_0(\R \times \R_+)$ we use Plancherel's identity to equal
\begin{align*}
\int_0^{+\infty} \int \Psi_{1,2}(\theta, \mu) \rho(\theta, \mu) \d\theta \, \d\mu = \int_{0}^{+\infty} \int \hat \Psi_{1,2}(k, \mu) \hat \rho(k,\mu) \d k \, \d\mu.
\end{align*}
By using \eqref{psi.1.2}, together with the commutation of $P_1(\cdot)$ and $\eta$ with the operator $\O(k)-\lambda$, this also equals to
\begin{align}\label{duality.psi.2}
\int_0^{+\infty} \int \Psi_{1,2}(\theta, \mu) \rho(\theta, \mu) \d\theta \, \d\mu = \int \int_{0}^{+\infty} \hat g_\eps(k, \mu) \eta(k) (\O(k) -\lambda)^{-1}P_1(k) \hat \rho(k, \mu) \d\mu \, \d k.
\end{align}
Again by Plancherel's identity we have that
\begin{align}\label{term.psi.1.2.A}
\int_0^{+\infty} \int \Psi_{1,2}(\theta, \mu) \rho(\theta, \mu) \d\theta \, \d\mu= \int \int_{0}^{+\infty} g_\eps(\theta, \mu) \mathcal{F}^{-1}\bigl(\eta\zeta \bigr)(\theta, \mu) \d\mu \, \d \theta,
\end{align}
with $\zeta:=  (\O(\cdot) -\lambda)^{-1}P(\cdot)\hat \rho$. Hence, by \eqref{general.inequality.1} and \eqref{growth.g} we bound
\begin{align}\label{term.Psi.1.2.B}
|\int_0^{+\infty} \int \Psi_{1,2}(\theta, \mu) \rho(\theta, \mu) \d\theta \, \d\mu|& \lesssim   \biggl( \int \int_0^{+\infty} \bigl(|\eta(k) \zeta(k, \mu)|^2 +   |\partial_k^{m+1}\bigl(\eta(k) \zeta (k, \mu) \bigr)|^2 \bigr) \d\mu \, \d k  \biggr)^{\frac 1 2}.
\end{align}

\bigskip

To conclude  \eqref{estimate.test.function}, it remains to argue that
\begin{align*}
\biggl(&\int \int_0^{+\infty} ( |\eta \zeta(k, \mu)|^2 + |\partial_k^{m+1}( \eta \zeta(k, \mu))|^2 ) \d\mu \d k\biggr)^{\frac 1 2}\lesssim \biggl(\int_0^{+\infty} \int (|\hat \rho(k,\mu)|^2  + |\partial^{1+m}_k\hat \rho(k,\mu)|^2) \d\mu \d k\biggr)^{\frac 1 2}.
\end{align*}
The assumption on the support of $\eta$, Leibniz rule and Lemma \ref{l.schwarz.estimate.muk} with values $\beta \leq m +1$ and $\alpha=0$ applied on the right-hand side imply that
\begin{align*}
\biggl(&\int \int_0^{+\infty} ( |\eta \zeta(k, \mu)|^2 + |\partial_k^{m+1}( \eta \zeta(k, \mu))|^2 ) \d\mu \d k\biggr)^{\frac 1 2}\\
&\lesssim \biggl( 1 + \sup_{|k-k_1|< 2} \frac{1}{|\nu_2(k) - \lambda|}\biggr)^{m + n +2} \biggl(\int_0^{+\infty} \int (|\hat \rho(k,\mu)|^2  + |\partial^{1+m}_k\hat \rho(k,\mu)|^2  +|\partial^{1+m}H_1(k, \mu)|^2) \d\mu \d k\biggr)^{\frac 1 2}.
\end{align*}
By Lemma \ref{l.basic.flat}, the supremum on the right-hand side is bounded. Since the term with $\partial_k^{m+1} H_1^2$  on the right-hand side of the 
second inequality above is integrated only on the bounded set $\{ |k - k_1| < 2\}$, by Lemma \ref{l.schwarz.estimate.muk} we also have 
$$
\biggl(\int_{|k-k_1|<2} \int_0^{+\infty}(1 + \mu)^{n}|\partial^{1+m}H_1(k, \mu)|^2 \d\mu \d k\biggr)^{\frac 1 2} \lesssim 1.
$$
This concludes the proof of  \eqref{estimate.test.function} and yields  \eqref{growth.psi.a}-\eqref{growth.psi}  for $\Psi_{1,2}$.

\bigskip

\noindent{\em Term $\Psi_{2}$.} \, We prove  \eqref{growth.psi} and \eqref{psi.1.2.in.zero} for $\Psi_{2}$ as done for $\Psi_{1,2}$. This case is simpler than the previous one as the function $\Psi_2$ is supported on the frequencies $k$ where the operator $(\O(k) -\lambda)$ is invertible. 

\bigskip

\noindent{\em Term $\Psi_{1,1}$.\,} We now turn to $\Psi_{1,1}$ and claim that for every $|\Theta| > 1$, 
\begin{align}\label{main.estimate.A}
 \biggl(\fint_{|\theta|< 1}&  \int_0^{+\infty} |\Psi_{1,1}(\theta+ \Theta, \mu) - \Psi_{1,1}(\theta, \mu)|^2 \d\mu \, \d\theta \biggr)^{\frac 1 2} 
 \lesssim |\Theta|^{1 + m}.
\end{align}
This, together with the triangle inequality and \eqref{psi.1.2.in.zero} for both $\Psi_{1,2}$ and $\Psi_{2}$ implies \eqref{growth.psi} also for the function $\Psi_{1,1}$.

\smallskip

  Let us define $G(k) = \int P_1(k) \hat g(k, \mu) H_1(k, \mu) \d\mu$ so that, by definition of the
projection $P_1(k)$ (see \eqref{definition.projection}),  we may write
\begin{align}\label{def.GK}
P_1^\perp(k) \hat g(k, \mu) = G(k) H_1(k, \mu).
\end{align}
By inverting in \eqref{psi.1.1} the Fourier transform and using this new notation we thus have
\begin{align*}
\Psi_{1,1}(\theta, \mu) &= \int e^{-i \theta k}\eta(k)\frac{G(k)- G(k_1)}{\nu_1(k)-\lambda} H_1(k, \mu) \d k.
\end{align*}

\bigskip

Since $k_1 \neq 0$, we may fix $\bar R:= \frac{4\pi}{ k_1}$ and observe that it suffices to prove that for every $\Theta \in \frac{2\pi}{k_1}\Z$ with $|\Theta| > 1$, it holds
\begin{align}\label{main.estimate}
 \biggl(\int_{|\theta|< \bar R}&  \int_0^{+\infty} |\Psi_{1,1}(\theta+ \Theta, \mu) - \Psi_{1,1}(\theta, \mu)|^2 \d\mu \, \d\theta \biggr)^{\frac 1 2} 
 \lesssim  |\Theta|^{1 + m}.
\end{align}
Indeed, since for all $\Theta \in \R$ we may find $\Theta_0 \in  \frac{2\pi}{k_1}\Z$ with $|\Theta -\Theta_0| < 2\pi k_1$, inequality \eqref{main.estimate.A} follows immediately from the estimate above.

\bigskip

With no loss of generality, we argue \eqref{main.estimate} in the case of $R=1$. We write the integral
\begin{equation}
\begin{aligned}\label{first.identity}
\int_{|\theta|< \bar R}  \int_0^{+\infty} &|\Psi_{1,1}(\theta+ \Theta, \mu) - \Psi(\theta, \mu)|^2 \d\mu \, \d\theta \\
&= \int_{|\theta|< \bar R} \int_0^{+\infty} |\int (e^{-i (\theta+\Theta) k} - e^{-i\theta k}) \eta(k)\frac{G(k)- G(k_1)}{\nu_1(k)-\lambda} H_1(k, \mu)\, \d k|^2 \, \d\mu \, \d\theta
\end{aligned}
\end{equation}
and begin by claiming that
\begin{equation}
\begin{aligned}\label{psi.1.1.reduced}
\biggl(\int_{|\theta|< 1} & \int_0^{+\infty}|\Psi_{1,1}(\theta+ \Theta, \mu) - \Psi_{1,1}(\theta, \mu)|^2 \d\mu \, \d\theta \biggr)^{\frac 1 2}\\
& \lesssim (1 + |\Theta|)^{2m} \\
& \quad + \int_{|\theta|<1} |\int \int (e^{-i (\theta+\Theta) k} - e^{-i\theta k})e^{- |k-k_1|^2}  (\int g(y, \mu) H_1(k_1, \mu) \d\mu) 
\frac{e^{iy k} - e^{i y k_1}}{k-k_1} \, \d k \, \d y|^2 \, \d\theta.
\end{aligned}
\end{equation}

\medskip

In other words, the previous inequality yields that, up to an error $( 1 + |\Theta|)^{2m}$, we may substitute in the right-hand side in \eqref{first.identity}  the functions  $H_1(k_1,\mu)$ to $H_1(k, \mu)$,  $k-k_1$ to $\nu_1(k) -\lambda$, $e^{-(k-k_1)^2}$ to $\eta$ and finally $\int_0^{+\infty} (\hat g_\eps(k, \tilde \mu) - \hat g_\eps(k_1, \tilde\mu)) H_1(k_1, \tilde\mu) \d\tilde\mu$ to $G_1(k)- G_1(k_1)$. We claim that all the error terms produced by the previous substitutions may be tackled by means of Lemma \ref{RL}. We give the argument for the first error term, i.e.
$$
 \int_{|\theta|<1} \int_0^{+\infty} |\int (e^{-i (\theta+\Theta) k} - e^{-i\theta k}) \eta(k) G_1(k) \frac{H_1(k, \mu)- H_1(k_1,\mu)}{\nu_1(k)-\lambda} \, \d k|^2 \, \d\mu,
$$
as the other remaining terms follow as well by analogous manipulations and the estimates of Lemma \ref{RL}.  We claim that for every $\theta \in \R$
\begin{align}\label{first.term.psi.1.1.pointwise}
 \int_0^{+\infty} |\int e^{-i (\theta k} \eta(k) G_1(k) \frac{H_1(k, \mu)- H_1(k_1,\mu)}{\nu_1(k)-\lambda} \, \d k|^2 \, \d\mu \lesssim |\theta|^{2m}+1.
\end{align}
By the triangle inequality this estimate implies immediately that the error term above is bounded by $(1+ |\Theta|)^{2m}$. Thanks to the definition of $G_1$, we may invert the Fourier transform and rewrite the left-hand side above as
\begin{align}\label{first.term.psi.1.1.pointwise}
 \int_0^{+\infty} |\int \int e^{-i (\theta - y)k} \bigl(\int g(y, x) H_1(k, x) \d x \bigr) \eta(k)  \frac{H_1(k, \mu)- H_1(k_1,\mu)}{\nu_1(k)-\lambda} \, \d y \, \d k|^2 \, \d\mu 
\end{align}

Inequality \eqref{first.term.psi.1.1.pointwise} is an immediate consequence of \eqref{RL.2} of Lemma \ref{RL} for the operator $T$ applied to the function $g$ with  $\rho(k,\mu) = \eta(k) \frac{H_1(k, \mu)- H_1(k_1,\mu)}{\nu_1(k)-\lambda}$. 

\bigskip

It thus remains to tackle the term on the right-hand side of \eqref{psi.1.1.reduced} and show that this is in turn  bounded by the right-hand side of \eqref{main.estimate}. By applying Lemma \ref{l.chi} to the inner two integrals on the right-hand side of \eqref{psi.1.1.reduced}, with the function $u(y)= \int g(y, \mu) H_1(k_1, \mu) \d\mu$, we obtain that
\begin{align*}
&\int_{|\theta|<1} |\int  \int (e^{-i (\theta+\Theta) k} - e^{-i\theta k})e^{-|k-k_1|^2}\biggr(\int g(y, \mu) H_1(k_1, \mu) \d\mu \biggl) \frac{e^{i ky}- e^{ik_1y}}{k-k_1} \d y \, \d k|^2 \, \d\theta\\
&= i \int_{|\theta|<1} |\int_{\theta +\Theta}^{\theta} \int \biggr(\int g(y, \mu) H_1(k_1, \mu) \d\mu \biggl)e^{-i k_1(\theta -y)}  (e^{-\frac {(y-x)^2} {2}} - e^{-\frac {x^2} {2}}) \d y \, \d x  |^2 \, \d\theta.
\end{align*}
Since by \eqref{compatiblity.condition} we have
\begin{align*}
 \int_0^{+\infty} \hat g(k_1, \mu) H_1(k_1, \mu) \d\mu =  \int_0^{+\infty} \int e^{i y k_1} g(y, \mu) H_1(k_1, \mu) \d\mu \d y = 0, 
\end{align*}
the above terms reduce to
\begin{align*}
&\int_{|\theta|<1} |\int  \int (e^{-i (\theta+\Theta) k} - e^{-i\theta k})e^{-|k-k_1|^2}\biggr(\int g(y, \mu) H_1(k_1, \mu) \d\mu \biggl) \frac{e^{i ky}- e^{ik_1y}}{k-k_1} \d y \, \d k|^2 \, \d\theta\\
&\quad = -i \int_{|\theta|<1} |\int_{\theta}^{\theta + \Theta} \int  e^{-c(y-x)^2}\biggr(\int g(y, \mu) H_1(k_1, \mu)e^{-i k_1(\theta -y)} \d\mu \biggl)
\d y \, \d x|^2 \, \d\theta.
\end{align*}
By using Cauchy-Schwarz inequality together with Lemma \ref{l.basic.flat} this implies that 
\begin{align*}
&\int_{|\theta|<1} |\int  \int (e^{-i (\theta+\Theta) k} - e^{-i\theta k})e^{-|k-k_1|^2}\biggr(\int g(y, \mu) H_1(k_1, \mu) \d\mu \biggl) \frac{e^{i ky}- e^{ik_1y}}{k-k_1} \d y \, \d k|^2 \, \d\theta\\
&\lesssim \int_{|\theta|<1} |\int_{\theta}^{\theta + \Theta} \int  e^{-c(y-x)^2}\bigr(\int  |g(y, \mu)|^2 \d\mu \bigl)^{\frac 1 2}\d y \, \d x|^2 \, \d\theta\\
&\lesssim (|\Theta| + 1) \int_{|x| < |\Theta| + 1} \int  e^{-c(y-x)^2}\int  |g(y, \mu)|^2 \d\mu \, \d y \, \d x \, \d\theta \stackrel{\eqref{growth.g}}{\lesssim}(|\Theta| + 1)^{2m + 2}.
\end{align*}
By \eqref{psi.1.1.reduced}, this yields \eqref{main.estimate} and completes the proof of \eqref{growth.psi.a} for $\Psi_{1,1}$. 

\bigskip

By combining the previous three steps, the triangle inequality yields \eqref{growth.psi} also for the function $\Psi$. This concludes the proof of Proposition \ref{proposition.growth}.
\end{proof}

\begin{proof}[Proof of Lemma \ref{l.identification}]
We give the proof in the case $M=1$ and $m=2$. Furthermore, in order to keep a leaner notation, we also assume that $N=1$. We thus suppress the index $l (=1)$ in all the functions considered. Let $g(\mu, \theta):= \bigl( f_{1}(\mu) + i\theta f_{2}(\mu) \bigr) e^{-i k_1 \theta}$. Let $\hat \Psi= \hat\Psi(k, \mu)$ and $\hat g= \hat g(k, \mu)$ be the Fourier transforms of $\Psi$ and $g$ with respect to the variable $\theta$. By the explicit formulation of $g$ we have that $\hat g$ is a Schwarz distribution in $k$ and equals to
\begin{align*}
\hat g(k, \mu) =f_{1}(\mu) \delta(k - k_1) + f_2(\mu) \partial_k\delta( k -k_1),
\end{align*} 
in the sense that it defines a linear bounded functional over functions $\rho \in \mathcal{S}(\R \times \R_+)$. Similarly, by assumption \eqref{quadratic},
we have that $\hat \Psi$ defines a Schwarz distribution in the sense that for every $\rho \in  \mathcal{S}(\R \times \R_+)$ we may define
\begin{align*}
\langle \hat \Psi , \rho \rangle = \int \int_0^{+\infty} \Psi(\theta, \mu) \mathcal{F}(\rho)(\theta, \mu) \d\mu \d\theta,
\end{align*}
and, by Cauchy-Schwarz inequality and Plancherel's identity, bound for any $\alpha> \frac 5 2$
\begin{align*}
|\langle \hat \Psi , \rho \rangle| \leq \bigl(\int  \int_0^{+\infty} \frac{|\Psi(\theta, \mu) |^2}{1 + |\theta|^{2\alpha}}\bigr)^{\frac 1 2} \bigl( \int  \int_0^{+\infty} |(1 -\partial_k^2)^{\frac \alpha 2}\rho( k, \mu)|^2 \bigr)^{\frac 12} {\lesssim} \iii{\Psi}_2 \bigl( \int  \int_0^{+\infty} |(1 -\partial_k^2)^{\frac \alpha 2}\rho( k, \mu)|^2 \bigr)^{\frac 12}.
\end{align*}

\bigskip

Furthermore, from the equation for $\Psi$ we obtain that 
\begin{align*}
\begin{cases}
(\O(k)- \lambda)\hat \Psi(k, \mu ) = f_1(\mu)\delta( k -k_l) + f_2(\mu)  \partial_k^j\delta( k -k_l)\\
\hat \Psi(k, 0) = 0
\end{cases}
\end{align*}
in the sense that for every $\rho \in \mathcal{S}(\R \times \R_+)$ with $\rho(k, 0)=0$
\begin{align}\label{very.weak.formulation}
\langle (\O(k) - \lambda) \rho ; \hat \Psi \rangle =   \int_0^{+\infty} f_1(\mu) \rho(k_1, \mu) \d \mu +  \int_0^{+\infty} f_2(\mu) \partial_k\rho(k_1, \mu) \d\mu.
\end{align}
We begin by observing that the distribution $\hat \Psi$ is supported on $\{ k_1 \} \times \R_+$. This indeed follows by using the previous identity with any $\rho$
being supported in $\R \backslash \{k_1 \} \times \R_+$. This, together with the assumption \eqref{quadratic} on the growth of $\Psi$, implies that
\begin{align}\label{Psi.Fourier.identification}
\hat \Psi(k, \mu) = \alpha_{1}(\mu)\delta(k-k_1) + \alpha_2 \partial_k\delta(k - k_j)+ \alpha_3 \partial^2_k \delta(k-k_1),
\end{align}
with
$$
 \int_0^{+\infty} (1 + \mu)^{-2n}|\alpha_{j}(\mu)|^2 \d\mu < +\infty, \ \ \ \ \alpha_{j}(0)=0,
$$
for all $j= 1,\cdots, 3$.

\bigskip

By inserting \eqref{Psi.Fourier.identification} into the formulation \eqref{very.weak.formulation} the arbitrariness of $\rho \in \mathcal{S}$ and the condition $\alpha_j(0)=0$ for all $j =1, \cdots, 3$ yields that 
\begin{align*}
\begin{cases}
&(\O(k_1) - \lambda)\alpha_{3} = 0, \\
&(\O(k_1) - \lambda)\alpha_{2} = 4( k_1 + \mu) \alpha_{3} + f_{2},\\ 
&(\O(k_1) - \lambda)\alpha_{1} = 2( k_1 + \mu) \alpha_{2} + 2\alpha_{3} + f_{1}.
\end{cases}
\end{align*}
in $\R_+$. By Lemma \ref{l.stupid} applied to the first equation above, implies that $\alpha_m \in L^2(\R_+)$ and thus that
\begin{align}
\alpha_3 =  C H_1(k_1, \mu).
\end{align}
Furthermore, by testing the equation for $\alpha_2$ with $H_1(k_1,\mu)$ and using the above identity for $\alpha_3$, we infer the compatibility condition
\begin{align}\label{compatibility.liouville}
4 C \int_0^{+\infty}(\mu + k_1) |H_1(k_1, \mu)|^2 \d\mu = \int_0^{+\infty} f_{2}(\mu) H_1(k_1, \mu) \d\mu,
\end{align}
which by Lemma \ref{l.basic.flat} turns into the desired estimate for $A_3$.

\bigskip

To conclude the proof, it remains to argue in the same way also for $\alpha_2$ and $\alpha_1$: If we write $\alpha_2:= B H_1(k_1, \mu) + W_1$ with $W_1:= P_1^\perp(k_1)\alpha_2$ then by \eqref{compatibility.liouville} and the equation for $\alpha_2$ we have that $W_1 \in L^2(\R_+)$. Furthermore, inserting the previous decomposition of $\alpha_2$ into the equation for $\alpha_1$ and testing again with $H_1(k_1, \cdot)$, we obtain the desired identity for the term $B$. This, and the decomposition $\alpha_1 = A H_1(k_1, \cdot) + W_2$, $W_2:= P_1^{\perp}(k_1)\alpha_1$, allows also to conclude that $\| W_2 \|_{L^2}$ is bounded by $\| f_1 \|_{L^2}$ and $\| f_2 \|_{L^2}$. The identities obtained for $\alpha_1, \alpha_2$ and $\alpha_3$ and \eqref{Psi.Fourier.identification} establish Lemma \ref{l.identification}.
\end{proof}

\bigskip

\begin{proof}[Proof of Lemma \ref{l.energy.estimate}]
The proof of this result is standard: We first test the equation for $\Psi$ with $\eta_R^2 \Psi$, where $\eta_R$ is a smooth cut-off function in $\{|\theta|< r \}\times \{0 < \mu< R \}$. We apply Cauchy-Schwarz inequality, send $R\to +\infty$ and use the assumptions on assumptions on $\Psi, g$ to infer the inequality for $n=1$. The case $n \geq 1$ follows by iterating the previous procedure.
\end{proof}

\bigskip

\begin{proof}[Proof of Lemma \ref{l.stupid}]
As for Lemma \ref{l.energy.estimate}, also this result is an easy consequence of the energy inequality.  By condition \eqref{algebraic.growth}, the function $(1 + \mu)^{-n}\alpha \in H^1_0(\R_+)$. We denote by $\|\cdot \|$ the $L^2$-norm in $\R_+$. By testing the equation for $\alpha$ with $(1+ \mu)^{-2n} \alpha \in H^1_0(\R_+)$, we obtain
\begin{align}
\|(1 + \mu)^{-n} \partial_\mu \alpha \|^2 + \|(1 + \mu)^{-n}(\mu + k) \alpha\|^2 \leq \lambda \|(1 + \mu)^{-n} \alpha\|^2 + \|(1 + \mu)^{-n}f \|^2 + \|(1 + \mu)^{-n-1}\alpha \|^2 \lesssim 1.
\end{align}
Therefore, this, the triangle inequality and \eqref{algebraic.growth} imply that $\|(1 + \mu)^{-n+1}\alpha\| \leq |k|^{2n} + 1$. This, in turn, yields that also $(1 + \mu)^{-n+1}\alpha \in L^2(\R_+)$. We now test the equation for $\alpha$ with $\eta^2 (1 + \mu)^{-2(n-1)} \alpha$, with $\eta$ any smooth cut-off function: We bound
\begin{align}
\| \eta (1 + \mu)^{-(n-1)}\partial_\mu\alpha \|^2 + \| \eta (\mu + k) (1 +\mu)^{-(n-1)}\alpha \|^2 \lesssim (\lambda +1) \|(1 + \mu)^{-(n-1)}\alpha\|^2 + \| f \|^2.
\end{align}
Hence, $(1 + \mu)^{-(n-1)}\partial_\mu\alpha$ and $(1 + \mu)^{-(n-2)}\alpha$ are in $L^2(\R_+)$. If $f\in L^2(\R_+)$, we may iterate the previous procedure $n$-times and conclude that $\alpha \in L^2(\R_+)$. If, in addition we also assume that $(1 + \mu) f \in L^2(\R_+)$, then another iteration yields $\partial_\mu \alpha \in L^2(\R_+)$, as well as $(1 + \mu) \alpha \in L^2(\R_+)$. 
\end{proof}

\begin{proof}[Proof of Lemma \ref{l.eigenfunctions.behaviour}]
Let $l\in \N$ be fixed. As in the proof of the previous lemma, the notation $\| \cdot \|$ stands for $\| \cdot \|_{L^2(\R_+)}$ where the integration variable
is $\mu$. Bearing in mind that all the implicit constants in the notation $\lesssim$ and $\gtrsim$ depend also on the index $l\in \N$, when no ambiguity occurs we drop it in the 
notation for $\nu_l$, $H_l$ and all the other related quantities. 

\bigskip

Proof of $(a)$. By Lemma \ref{l.basic.flat}, it is immediate that $\nu'(k) \to 0$ for $k \to -\infty$. Furthermore, by \eqref{formula.derivative.nu}, Cauchy-Schwarz 
inequality and the normalization $\|H(k ; \cdot) \|_{L^2}=1$ we have
\begin{align*}
\nu'(k) = \int_0^{+\infty} (\mu + k) |H(k ; \mu)|^2 \d\mu \geq \bigl(\int_{0}^{+\infty} (\mu + k)^2 |H(k; \mu)|^2 \bigr)^{\frac 1 2},
\end{align*}
and, by the energy estimate obtained by testing the equation for $H(k ; \cdot)$ with $H(k ; \cdot)$ itself, we get 
$\nu'(k) \geq \nu(k)^{\frac 1 2}$, i.e. $\nu(k) \leq (k + \nu_l(0) )^2$. This yields the first inequality in $(a)$. We stress that $\nu(0) = 2l + \frac 1 2$. We now turn to proving the asymptotic expansion
for $\nu(\cdot)$ when $k \to +\infty$: To do so, we observe that we may rewrite the equation for $H(k ; \cdot)$ as
\begin{align*}
-\partial_\mu^2 H(k; \cdot) + k \mu H(k; \cdot) + \mu^2 H(k; \mu) = (\nu(k) - k^2) H(k; \cdot),
\end{align*}
so that, when $k>>1$, the function $A(k; \mu) := H(k, (2k)^{-\frac 1 3}\mu)$ solves
\begin{align*}
-\partial_\mu^2 A(k ; \cdot) + \mu A(k ; \cdot) + (2k)^{-\frac 4 3} \mu^2 A(k ; \cdot) = k^{-\frac 2 3} (\nu_l(k) - k^2) A(k ; \cdot),
\end{align*}
with boundary condition $A_l(k ; 0) = 0$. Let us define $\tilde\Gamma(k):=k^{-\frac 2 3} (\nu(k) - k^2)$; For each $m\in \N$, let $\bar A_m= \bar A_m(\mu)$ solve the eigenvalue problem 
\begin{align}\label{Airy.eigenvalues}
\begin{cases}
-\partial_\mu^2 \bar A_m + \mu \bar A_m = \bar\Gamma_m \bar A_m \ \ \ \ \mu > 0 \\
\bar A(0)=0.
\end{cases}
\end{align}
where $\{\bar\Gamma_m\}_{m\in \N} \subset \R_+$ correspond to the zeros of the Airy function $A$ satisfying on $\R$ the equation
$$
-A'' + \mu A = 0. 
$$
{By the Min-Max Theorem for semibounded operators \cite{ReedSimon}[Theorem XIII.1], it is easy to see that $0 < \tilde\Gamma_l - \bar\Gamma_l \lesssim k^{-\frac 4 3}$,
where the constant above depends on $\bar\Gamma_l$, but not on $\tilde\Gamma_l$. By recalling the definition of $\tilde \Gamma_l$, we conclude the second identity
in \eqref{asymptotic.nu}  and conclude the argument for part (a).}

\bigskip

We now turn to $(b)$. We remark that, since the Hamiltonians $\O(k)= -\partial_\mu^2 +(\mu +k)$ depend analytically on $k$, so do the eigenvalues 
$\{ \lambda_j(k)\}_{j\in \N}$ and the associated eigenfunctions $\{H_j(k, \cdot) \}_{j\in \N}$. 
We show that for every $n, m\in \N$ there exists a constant $C=C(m,n,l)< +\infty$ and an exponent $\gamma=\gamma(m,n)$ such that{
\begin{align}\label{decay.eigenfunctions}
\int_0^{+\infty} (\mu + k)^{2n} |\partial_k^m H(k , \mu)|^2 \leq C(1 + k^2)^{\gamma}.
\end{align}}
From this, the statement of the lemma immediately follows by the triangle inequality. 

\bigskip

We start by showing \eqref{decay.eigenfunctions} with $m=0$:  The proof of this is very similar to the proof of \eqref{schwarz.estimate.0.1} with $\alpha > 0$ and $\beta=0$
and where we fixed the weight $w=\mu + k$.  The only difference is that $H(k, \cdot) \in$ ker$(\O(k) - \nu(k))$ and we may not apply any estimate on the inverse of the 
operator. Since we assumed that $\|H(k, \cdot)\|=1$, we may apply the energy estimate and obtain that 
$$
\|\partial_\mu H(k, \cdot) \|^2 + \| (\mu + k) H(k,\cdot)\|^2 \leq \nu(k). 
$$
This, together with (a), yields \eqref{decay.eigenfunctions} in the case $n=1$ and $m=0$. To extend it to higher values of $n$ we inductively apply the energy estimate: 
If we assume that the inequality holds for $n$, then we may test the equation of $H$ with $(\mu + k)^{2n} H \in L^2$ (by hypothesis) and infer that
$$
\|(\mu + k)^n\partial_\mu H(k, \cdot) \|^2 + \| (\mu + k)^{n+1} H(k,\cdot)\|^2 \leq \nu(k)\|(\mu +k)^n H(k, \cdot)\|^2 + \|(\mu + k)^{n-1}H\|^2.
$$
By applying the hypothesis induction, we conclude \eqref{decay.eigenfunctions} for $m=0$ and $n+1$. We remark that at each step the constant gets worse by a multiplicative
factor depending on $\nu_l(k)$ which, by (a), grows at most like $k$. 

\bigskip

We now turn to the case $m>0$ in \eqref{decay.eigenfunctions}. We begin with $m=1$: By differentiating in $k$ the equation solved by $H(k, \cdot)$ we get that for
every $k\in \R$
\begin{align}\label{equation.partial.k}
\begin{cases}
[\O(k) - \nu(k) ] \partial_k H(k, \cdot) = \nu'(k) H(k, \cdot) - 2 (\mu + k) H(k, \cdot) \ \ \ \text{ for $\mu >0$}\\
\partial_k H_k(k, 0)= 0.
\end{cases}
\end{align}
Since the right-hand side above is orthogonal w.r.t. $H_l(k, \cdot)$ (cf. Lemma \ref{l.basic.flat}), and by the normalization $\| H(k, \cdot) \|_{L^2(\R_+)}=1$ for all
$k\in\R$ follows
 \begin{align}\label{orthognality.derivative}
 \int_0^{+\infty} \partial_k H(k ,\mu) H(k, \mu) \, \d\mu = 0,
 \end{align} 
 We may apply Lemma \ref{l.schwarz.estimate.muk}, (b) with $\beta=0$ and $\alpha> 0$ and the weight $w=(\mu +k)$ to obtain
\begin{align*}
\|(\mu +k)^\alpha \partial_k H(k,\cdot) \|^2&\lesssim C_1(k,\alpha,0,\nu_l(k))(\nu'(k)^2 + 1)\|(1 + (\mu + k)^{\alpha})H(k,\cdot) \|^2.
\end{align*}
By Lemma \ref{l.schwarz.estimate.muk} the constant above only depends algebraically on $\nu(k)$ and on $D_l(k, \nu(k))$. By part $(a)$ this implies that $C_1$ above grows
algebraically in $k$. The same holds for the term $(\nu'(k)^2 + 1)$. Thus, by 
 \eqref{decay.eigenfunctions} with $m=0$ and $\alpha=n$ we conclude  $\|(\mu +k)^n \partial_k H(k,\cdot) \|^2\lesssim (1 + |k|)^{2\gamma}$, for some $\gamma=\gamma(\alpha)$, i.e. \eqref{decay.eigenfunctions} for $\beta=1$ and any $n \in \N$.

\bigskip

The case $\beta >1$ follows as above by iterating the same estimates. The only difference is that for $\beta >1$ we have that 
$$
 \int_0^{+\infty} \partial_k^\beta H(k, \mu) H(k,\mu) \neq 0.
$$
Lemma \ref{l.schwarz.estimate.muk} thus only allows to control the terms $(\mu +k)^n P(k)\partial_k^\beta H(k, \mu)$. To control the terms 
$(\mu + k)^n P_l^\perp(k) H_l(k, \cdot)$ we only need to observe that
$$
(\mu + k)^n P^\perp(k) H(k, \cdot)=(\int_0^{+\infty} \partial_k^\beta H(k, \mu) H(k,\mu) ) (\mu + k)^n H(k, \mu)
$$
and that by differentiating $\beta$ times the identity $\| H(k, \cdot) \|=1$, we have
$$
\int_0^{+\infty} \partial_k^\beta H(k, \mu) H(k,\mu) =\frac 1 2 \sum_{j=1}^{\beta-1} \int_0^{+\infty} \partial_k^j H(k, \mu) \partial_k^{\beta-j}H(k,\mu),
$$
which yields that 
$$
\| (\mu + k)^n P^\perp(k) H(k, \cdot)\| \lesssim \bigl(\sup_{j \in \{1, \cdots \beta-1\}} \|\partial_k^j H(k,\cdot) \|\bigr)^2 \|(\mu + k)^n H(k,\cdot) \|.
$$
Since all the term on the right-hand side contain only derivatives lower than $\beta$, we may apply the induction hypothesis and conclude the desired estimate also for the
term $(\mu + k)^n P^\perp(k) H(k, \cdot)$.  This concludes the proof of \eqref{decay.eigenfunctions} and of Lemma \ref{l.eigenfunctions.behaviour}.
\end{proof}

\begin{proof}[Proof of Lemma \ref{l.energy.estimate}]
The proof of this is an easy consequence of an iteration of the energy inequality: We observe that by standard elliptic regularity theory we may infer that also $\Psi \in C^\infty(\R \times \R_+)$. Let $\eta= \eta(\theta)$ be a cut-ff for $\{|\theta| < 1\}$ in $\{|\theta| < 2 \}$ and let us define $\eta_R(\theta) = \eta( \frac \theta R)$. Then by testing the equation for $\Psi$ with $\eta_R^2\Psi$ with itself, we get
\begin{align}
\|\eta_R \partial_\mu\Psi \|_{L^2}^2 + \|\eta_R (\partial_\theta + i\mu) \Psi \|_{L^2}^2 \lesssim \| \eta_R g\|_{L^2}^2 + \| \eta'_R \Psi \|_{L^2}^2, 
\end{align}
which yields, by the assumptions on the support of $\eta$, \eqref{iterated.energy} with $n= 1$. To obtain the estimate in the general case it remains to argue by induction
by writing the equation for $(\partial_\theta + i\mu) \eta \in L^2$ and testing it with $\eta_R^2 (\partial_\theta + i\mu) \eta$.
\end{proof}

\begin{proof}[Proof of Lemma \ref{l.schwarz.estimate.muk}]
To keep our notation leaner, when no ambiguity occurs we write $ \eta$ instead of $\eta(k,\cdot)$ and $\| \cdot \|$ instead of $\| \cdot \|_{L^2(\R_+)}$.
Throughout the proof, the variable $k$ is indeed fixed and all the integrals are in $\mu \in \R_+$.

\smallskip

We begin by showing case (a) and start with case $\beta=0$. We prove it by induction over $\alpha \in \N$. Since $k \notin \{ k_1, \cdots, k_{N}\}$, the operator $(\O(k) -\lambda)$ is invertible. Hence, we immediately have that
\begin{align}\label{energy.0}
\| \eta \|^2 \leq \frac{1}{D(k, \lambda)}\|\rho\|^2.
\end{align}
By the energy inequality, obtained by testing the equation for $\eta$ with $\eta$ itself, we also obtain
\begin{align}\label{schwarz.estimate.energy}
\|\partial_\mu \eta \|^2 + \|(\mu + k)\eta \|^2 \leq \lambda \| \eta \|^2 + \|\rho\|^2 \leq (\frac{\lambda}{D(k)} + 1) \| \rho\|^2.
\end{align}
This, together with \eqref{energy.0} implies \eqref{schwarz.estimate.0.1} with $\beta=0$ and $\alpha =0$. Let us now assume that \eqref{schwarz.estimate.0.1} holds for $\beta=0$ and any $\tilde \alpha \leq \alpha_0$. We prove it for $\alpha + 1$. To do so, we observe that, since $(\mu + k) w^\alpha \eta \in L^2$  by the induction hypothesis, then the assumption \eqref{linear.weight} on the weight $w$ implies that also $w^{\alpha+1} \eta \in L^2$. In addition, this function satisfies
\begin{align*}
\begin{cases}
(\O(k) - \lambda) (w^{\alpha+1} \eta(k, \cdot)) = w^{\alpha +1}\rho(k, \cdot) - 2\alpha w' w^{\alpha}\partial_\mu\eta -\alpha(\alpha -1)(w'' + 2|w'|^2)w^{\alpha -2}\eta \ \ \ \text{for $\{ \mu > 0\}$}\\
w^{\alpha +1}(0)\eta(k, 0) = 0.
\end{cases}
\end{align*}
Thus,
\begin{align*}
\| w^{\alpha+1}\eta\|^2 \leq \frac{1}{D(k, \lambda)} \bigl(\| w^{\alpha +1}\rho(k, \cdot)\|^2 + \|2\alpha w' w^{\alpha}\partial_\mu\eta\|^2 + \|\alpha(\alpha -1)(w'' + 2|w'|^2)w^{\alpha -2}\eta\|^2 \bigr).
\end{align*}
By \eqref{linear.weight} and the induction hypothesis for $\alpha$ and $\alpha-1$ we conclude that
\begin{align}\label{estim.alpha1}
\| w^{\alpha+1}\eta\|^2 \lesssim \| \rho \|^2 + \|w^{\alpha +1}\rho\|^2.
\end{align}  
By testing the equation for $w^{\alpha+1}\eta$ with the function itself,  \eqref{linear.weight} implies also that
\begin{align*}
\| w^{\alpha+1}\partial_\mu \eta \|^2 + \|(\mu +k) w^{\alpha+1} \eta\|^2 \leq \lambda \|w^{\alpha+1}\eta\| + \| w^{\alpha +1}\rho(k, \cdot)\|^2 + \|w^{\alpha}\partial_\mu\eta\|^2 + \|w^{\alpha -2}\eta\|^2 + \| w^\alpha \eta\|^2.
\end{align*} 
We now appeal to, \eqref{estim.alpha1} and the induction hypothesis to conclude \eqref{schwarz.estimate.0.1} for $\beta=0$ and $\alpha +1$. This concludes the proof of the case $\beta=0$. Note that at every iteration the constants multiplied by a finite multiple of $\lambda$ and $\frac{1}{D(k,\lambda)}$.

\bigskip

We now turn to the case $\beta > 0$ and prove it by induction. Let us assume that \eqref{schwarz.estimate.0.1} holds for all $\tilde \beta \leq \beta$ and for all $\alpha$. We show that this implies that \eqref{schwarz.estimate.0.1} is true also for $\beta+1$ and all $\alpha \in \N$. 
By differentiating $\beta + 1$ times the equation for $\eta$ we get
\begin{align*}
\begin{cases}
(\O(k) - \lambda)\partial_k^{\beta+1}\eta = \partial_k^{\beta+1}\rho + 2(\mu + k) \partial^\beta_k\eta + 2\partial_k^{\beta-1}\eta \ \ \ \text{in $\{ \mu > 0\}$} \\
\partial_k^{\beta+1}\eta= 0.
\end{cases}
\end{align*}
Since the operator $(\O(k) - \lambda)$ is invertible by the assumption on $k$, we may estimate
\begin{align*}
\|\partial_k^{\beta+1}\eta\|^2 \leq \frac{1}{D(k)}\bigl(\|\partial_k^{\beta+1}\rho + 2(\mu + k) \partial^\beta_k\eta + 2\partial_k^{\beta-1}\eta\|^2\bigr)
\end{align*}
so that by the induction hypothesis we conclude
\begin{align*}
 \|\partial_k^{\beta+1}\eta\|^2 \lesssim  \frac 1 D \bigl(1 + \frac{\lambda}{D} \bigr)^{\beta+1}\sup_{0 \leq n \leq \beta}\|\partial_k^n\rho\|^2.
\end{align*}
By the energy estimate, this also implies that the same holds for the terms $\partial_\mu \partial_k^{\beta+1}\eta$ and $(\mu+k) \partial_k^{\beta+1}\eta$, i.e. estimate \eqref{schwarz.estimate.0.1} for $\beta+1$ and $\alpha =0$. To extend the estimate to all values of $\alpha$ we may argue by induction over $\alpha$ similarly to the case case $\beta=0$ treated above: Note, indeed, that also in this case $(\mu + k)^\alpha \partial^{\beta+1}\eta$  solves for $\mu > 0$ the equation
\begin{align*}
(\O(k) - \lambda)(\mu + k)^\alpha \partial_k^{\beta+1} \eta &=(\mu + k)^\alpha\partial_k^{\beta+1}\rho + 2 (\mu + k)^{\alpha +1} \partial_k^{\beta}\eta\\
& \quad\quad + 2(\mu + k)^\alpha \partial_k^{\beta-1}\eta 
-2\alpha(\mu + k)^{\alpha-1}\partial_\mu\partial_k^{\beta+1}\eta -\alpha(\alpha-1)(\mu + k)^{\alpha -2}\partial_k^{\beta+1}\eta
\end{align*}
and all the terms on the right-hand side may be treated by the estimates obtained in the previous iteration. This concludes the proof of \eqref{schwarz.estimate.0.1}, (a).

\bigskip

We now turn to case (b). With no loss of generality, let $k$ be such that $D_1(k, \lambda) > 0$. By Lemma \ref{l.basic.flat}, the operator $(\O(k) - \lambda)$ is therefore invertible on the subspace {$\mathcal{H}_1(k):=\overline{\text{span}\{H_l(k, \cdot)\}_{l> 1}}^{L^2}$}
and, since $\eta(k, \cdot) \in \mathcal{H}_1(k)$, it immediately follows that
\begin{align}\label{energy.b}
\| \eta \|^2 \leq \frac{1}{D_1(k, \lambda)}\| \rho \|^2. 
\end{align}
The proof for \eqref{schwarz.estimate.0.2} with $\beta =0$ thus may be argued exactly as in case $(a)$, with the only difference that on the right-hand side the term that appears is $w^\alpha P_{1}\rho$ instead of $w^\alpha \rho$. Since $P_1\rho(k ,\cdot) = \rho(k, \cdot) - (\int \rho(k, \mu) H_1(k ,\mu) \d\mu) H_1(k, \cdot)$, we have
$$
\| w^\alpha P_{1}\rho \| \lesssim \|w^\alpha \rho\| + \|\rho\| \|w^\alpha H_1(k, \cdot )\|.
$$
This yields the proof of (b) in the cases $\beta =0$.

\bigskip

We now turn to the case $\beta >0$: Also in this case, the proof is similar to the one for case (a) with the difference that we need to separately estimate $P_1(k)\partial_k^\beta \eta $ and $P_1^\perp(k)\partial_k^\beta \eta$. We show how this may be done in the case $\beta =1$; the generalization to higher values of $\beta$ is immediate. 

\bigskip

 By differentiating in $k$ the equation for $\eta$ we get
\begin{align}\label{problem.projection}
\begin{cases}
(\O(k) - \lambda) \partial_k \eta(k, \cdot) = \partial_k (P_1(k)\rho) + 2(\mu +k)\eta\ \ \ \text{in $\{ \mu > 0\}$}\\
\partial_k\eta(k, 0) = 0.
\end{cases}
\end{align}
In addition, since the condition $\eta= P_1(k)\eta$ is equivalent to
\begin{align*}
 \int_0^{+\infty} \eta(k, \mu) H_1(k, \mu) = 0, 
\end{align*}
we also get that
\begin{align}\label{orthogonal}
 \int_0^{+\infty} \partial_k \eta(k,\mu) H_1(k, \mu) = -  \int_0^{+\infty} \eta(k, \mu) \partial_k H_1(k ,\mu) \, \d\mu.  
\end{align}
By decomposing $\partial_k \eta = P_1(k)\partial_k\eta + P_1^\perp(k)\partial_k\eta$, identity \eqref{orthogonal} and \eqref{problem.projection} yield
\begin{equation}\label{estimates.partial.eta}
\begin{aligned}
\| P_1^\perp(k)\partial_k\eta \|^2& \lesssim \|\eta\|^2 \| \partial_k H_1(k, \cdot)\|^2\stackrel{\eqref{energy.b}}{\lesssim}\frac{1}{D_1(k,\lambda)}\|\rho\|^2 \| \partial_k H_1(k, \cdot)\|^2,\\
\|P_1(k)\partial_k \eta \|^2&\leq \frac{1}{D_1(k,\lambda)}\| \partial_k(P(k)_1\rho) + 2(\mu +k)\eta\|^2\\
&\stackrel{\text{(b),\, $\beta=0$}}{\lesssim}  \frac{1}{D(k,\lambda)}(1+\frac{\lambda}{D_1(k,\lambda)})\bigl(\|\rho\|^2 + \|\partial_k(P(k)_1\rho) \|\bigr) .
\end{aligned}
\end{equation}
It thus remains to estimate the right-hand side above: We rewrite
\begin{align*}
\partial_k(P_1(k)\rho) = \partial_k\rho - \partial_k(c_1(k) H_1(k, \cdot)), \ \ \ c_1(k):= \int \rho(\mu, k) H_1(k, \mu) \, \d \mu
\end{align*}
so that
\begin{align*}
 \partial_k(P_1(k)\rho) = \partial_k\rho - \partial_k( c_1(k) H_1(k, \cdot))= \partial_k \rho - (\partial_k c_1) H_1(k,\cdot) - c_1 \partial_kH_1(k,\cdot)
\end{align*}
and hence
\begin{align}\label{derivative.of.projection}
\| \partial_k(P_1(k)\rho) \|^2 \lesssim \|\partial_k \rho\|^2 + \bigl( |\partial_k c_1|^2  + |c_1(k)| \|\partial_k H_1(k, \cdot)\|^2 \bigr).
\end{align}
By appealing to the explicit formulation of $c_1$ using Cauchy-Schwarz inequality in the integrals, we conclude that
\begin{align*}
\| \partial_k(P_1(k)\rho) \|^2 \lesssim \bigl(\|\partial_k \rho\|^2 + \|\rho\|^2 \bigr)\bigl( 1 + \|\partial_k H_1(k, \cdot)\|^2\bigr)
\end{align*}
By inserting this into \eqref{estimates.partial.eta} we obtain
\begin{align*}
 \|\partial_k\eta\| \lesssim \frac{1}{D_1(k,\lambda)}(1 + \frac{\lambda}{D_1(k,\lambda)}) \bigl(\|\partial_k \rho\|^2 + \|\rho\|^2 \bigr)\bigl( 1 + \|\partial_k H_1(k, \cdot)\|^2\bigr),
\end{align*}
i.e. \eqref{schwarz.estimate.0.2} for $\alpha = 0$ and $\beta =1$. The case $\alpha > 0$ follows as case (a), again with the only difference that the terms in which the right-hand side $\rho$ appears are of the form $w^\alpha \partial_k^\beta P_1(k)\rho$ and need to be estimated similarly to \eqref{derivative.of.projection}. The proof of Lemma \ref{l.schwarz.estimate.muk} is therefore complete.
\end{proof}

\subsection{Technical lemmas}
The proof of Proposition \ref{proposition.growth} relies on the following two technical Lemmas. For the sake of simplicity, we give the statements for the norms $\iii{ \cdot}_{m,M}$ in the case $M=1$ and write $\iii{\cdot}_m$ instead of $\iii{\cdot}_{m,1}$.
\begin{lem}\label{RL}
 Let $g \in L^2(\R \times \R_+)$ and let $\rho \in \mathcal{S}(\R \times \R_+)$. We denote by 
\begin{align}\label{def.T}
F_\rho(g)(\theta) &:= \int \int \int_0^{+\infty}  e^{i(\theta- y) k} g(y, \mu) \rho(k, \mu) \, \d\mu \, \d y \, \d k \\
T_\rho(g)(\theta, \mu) &:= \int \int e^{i(\theta- y) k} \bigl( \int_0^{+\infty} g(y, x) H_1(k, x) \d x \bigr) \rho(k,\mu) \d y \d k.
\end{align}

Then there exist $\alpha=\alpha(m)< +\infty$ such that
\begin{align}
\sup_{\theta \in \R}(1 + |\theta|)^{-m} |F_\rho(g)(\theta)| +  \iii{T_\rho(g)}_{m} &\lesssim \iii{g}_m \, \int (1 + |k|^\alpha)\sup_{l=0, \cdots, m+2} \biggl( \int_0^{+\infty} |\partial^{l}\rho(k, \mu)|^2 \d\mu \biggr)^{\frac 1 2} \d k, \label{RL.2}\\
\sup_{\mu > 0}\sup_{\theta \in \R}( 1 + |\theta|)^{-m} |T_\rho(g)(\theta, \mu)| &\lesssim \iii{g}_m \, \sup_{\mu > 0}\int (1 + |k|^\alpha)\sup_{l=0, \cdots, m+2}|\partial^{l}\rho(k, \mu)| \d k, \label{RL.1}
\end{align}
\end{lem}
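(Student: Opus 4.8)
The plan is to write both $F_\rho(g)$ and $T_\rho(g)$ as convolutions in the variable $\theta$ against kernels built out of $\rho$ (and $H_1$), to establish polynomial decay of these kernels by integrating by parts in $k$, and then to run a Schur-type argument against the at-most-polynomially-growing function $g$.

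First I would introduce the kernels
\begin{align*}
\check\rho(s,\mu):= \int_\R e^{isk}\rho(k,\mu)\,\d k, \qquad \Phi_\mu(s,x):= \int_\R e^{isk}\rho(k,\mu)H_1(k,x)\,\d k,
\end{align*}
so that, by Fubini's theorem,
\begin{align*}
F_\rho(g)(\theta) = \int_\R\int_0^{+\infty}\check\rho(\theta-y,\mu)\,g(y,\mu)\,\d\mu\,\d y, \qquad T_\rho(g)(\theta,\mu) = \int_\R\int_0^{+\infty}\Phi_\mu(\theta-y,x)\,g(y,x)\,\d x\,\d y.
\end{align*}
Since $\rho\in\mathcal S(\R\times\R_+)$ and, by Lemma \ref{l.eigenfunctions.behaviour}, the maps $k\mapsto\partial_k^j H_1(k,\cdot)$ grow only algebraically in $L^2(\R_+)$, integrating by parts $l$ times in $k$ is legitimate and yields $|s|^l|\check\rho(s,\mu)|\le \int_\R|\partial_k^l\rho(k,\mu)|\,\d k$ and, via the Leibniz rule, $|s|^l|\Phi_\mu(s,x)|\le\sum_{j\le l}\binom lj\int_\R|\partial_k^j\rho(k,\mu)|\,|\partial_k^{l-j}H_1(k,x)|\,\d k$. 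Taking $l=0$ for $|s|\le1$ and $l=m+2$ for $|s|>1$, applying Minkowski's integral inequality to pull the $k$-integral outside the $L^2$-norms in $\mu$ (and in $x$), and using Lemma \ref{l.eigenfunctions.behaviour}(b) with $n=0$ to bound $\|\partial_k^{l-j}H_1(k,\cdot)\|_{L^2(\R_+)}\lesssim 1+|k|^\alpha$ for a suitable $\alpha=\alpha(m)<+\infty$, I would obtain the kernel decay estimates
\begin{align*}
\Bigl(\int_0^{+\infty}|\check\rho(s,\mu)|^2\,\d\mu\Bigr)^{\frac12} + \Bigl(\int_0^{+\infty}\int_0^{+\infty}|\Phi_\mu(s,x)|^2\,\d x\,\d\mu\Bigr)^{\frac12} \lesssim (1+|s|)^{-(m+2)}\,\mathcal R_\alpha(\rho),
\end{align*}
where $\mathcal R_\alpha(\rho):=\int_\R(1+|k|^\alpha)\sup_{0\le l\le m+2}\bigl(\int_0^{+\infty}|\partial_k^l\rho(k,\mu)|^2\,\d\mu\bigr)^{\frac12}\,\d k$, together with the pointwise-in-$\mu$ variant $\bigl(\int_0^{+\infty}|\Phi_\mu(s,x)|^2\,\d x\bigr)^{\frac12}\lesssim(1+|s|)^{-(m+2)}\int_\R(1+|k|^\alpha)\sup_{0\le l\le m+2}|\partial_k^l\rho(k,\mu)|\,\d k$.

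Next, setting $h(y):=\bigl(\int_0^{+\infty}|g(y,\mu)|^2\,\d\mu\bigr)^{\frac12}$, Cauchy--Schwarz in the inner integration variable (and, for the $L^2_\mu$-norm of $T_\rho(g)$, one more application of Minkowski's inequality) reduces all three quantities to convolutions of the form $\int_\R K(\theta-y)h(y)\,\d y$ with $|K(s)|\lesssim(1+|s|)^{-(m+2)}$ times the relevant constant from the previous step. Then I would decompose $\R=\bigcup_{n\in\Z}[n,n+1)$, apply Cauchy--Schwarz on each unit interval, use $\int_{n-1}^{n+1}h^2\lesssim\iii{g}_m^2(1+|n|)^{2m}$ (which is precisely the definition of $\iii{\cdot}_m$), and combine the elementary bounds $1+|\theta-y|\ge\tfrac12(1+|\theta-n|)$ for $y\in[n,n+1)$ and $1+|n|\le(1+|\theta|)(1+|\theta-n|)$ to conclude
\begin{align*}
\Bigl|\int_\R K(\theta-y)h(y)\,\d y\Bigr| \lesssim \iii{g}_m\,(1+|\theta|)^m\sum_{n\in\Z}(1+|\theta-n|)^{-2} \lesssim \iii{g}_m\,(1+|\theta|)^m.
\end{align*}
Applying this respectively to $F_\rho(g)(\theta)$, to $\bigl(\int_0^{+\infty}|T_\rho(g)(\theta,\mu)|^2\,\d\mu\bigr)^{1/2}$, and to $T_\rho(g)(\theta,\mu)$ for fixed $\mu$, and noting that $\iii{U}_m\lesssim\sup_\theta(1+|\theta|)^{-m}|U(\theta)|$ for any function $U$, gives exactly \eqref{RL.2} and \eqref{RL.1}.

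The only genuine difficulty is bookkeeping: one must carefully match the precise mixed-norm structure on the right-hand sides — which $L^p$-norm in which of the three variables $k,\mu,x$, and carrying the weight $1+|k|^\alpha$ — against what the kernel estimates produce, and this is what dictates the exact placement of Minkowski's integral inequality and of the algebraic-growth bound in Lemma \ref{l.eigenfunctions.behaviour}(b). There is no deeper obstruction: once the kernel decay $(1+|s|)^{-(m+2)}$ is in hand, the estimate is a Schur test, and the exponent $m+2$ is exactly the one needed to make the series $\sum_n(1+|\theta-n|)^{-2}$ converge after absorbing the polynomial growth $(1+|n|)^m$ of $g$.
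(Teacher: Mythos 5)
Your argument is correct and follows essentially the same route as the paper's proof: integrate by parts $m+2$ times in $k$ (legitimate since $\rho\in\mathcal{S}$ and $\partial_k^j H_1$ grows only algebraically in $L^2$ by Lemma \ref{l.eigenfunctions.behaviour}(b)) to gain $(1+|s|)^{-(m+2)}$ kernel decay, split near/far, and test against the polynomially growing $g$ via the definition of $\iii{\cdot}_m$. Your explicit convolution-kernel and Schur-type $\R=\bigcup_n[n,n+1)$ packaging merely makes more visible a step the paper leaves implicit — that $\int_{|y-\theta|>1}|\theta-y|^{-(m+2)}\bigl(\int|g(y,x)|^2\,\d x\bigr)^{1/2}\d y\lesssim(1+|\theta|)^m\iii{g}_m$ — but the underlying Riemann--Lebesgue mechanism and the placement of Cauchy--Schwarz, Minkowski, and the $H_1$-growth bound are the same as in the paper.
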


\begin{lem}\label{l.chi}
Let $u \in L^1_{loc}( \R)$ and $k_1 \in \R \backslash \{ 0\}$. Then, there exists a constant $C= C(d) < +\infty$ such that for every $\theta \in \R$ and $\Theta \in \frac{2\pi}{k_1}$ it holds 
\begin{align*}
\int \int (e^{-i (\theta + \Theta) k} - e^{i \theta k}) u(y) e^{-(k-k_1)^2}\frac{e^{i ky}- e^{ik_1 y}}{k-k_1} \, d y \, d k
= i C \int_{\theta}^{\theta + \Theta} \int u(y) e^{-i k_1 (\theta -y)} \bigl( e^{-\frac{(y-x)^2}{2}}- e^{-\frac{x^2}{2}} \bigr) \, \d x \, \d y. 
\end{align*}

\end{lem}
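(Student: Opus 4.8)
The plan is to reduce the left-hand side to elementary one-dimensional Gaussian integrals by two substitutions. First I would change variables $k = k_1 + \ell$; since $\Theta \in \frac{2\pi}{k_1}\Z$ one has $e^{-ik_1\Theta}=1$, so that
\[
e^{-i(\theta+\Theta)k}-e^{-i\theta k}=e^{-ik_1\theta}\bigl(e^{-i(\theta+\Theta)\ell}-e^{-i\theta\ell}\bigr),\qquad
\frac{e^{iky}-e^{ik_1y}}{k-k_1}=e^{ik_1y}\,\frac{e^{i\ell y}-1}{\ell},
\]
while $e^{-(k-k_1)^2}=e^{-\ell^2}$. (This is the one place where the hypothesis $\Theta k_1\in 2\pi\Z$ is used: it is exactly what kills the spurious phase $e^{-ik_1\Theta}$ and produces the clean factorisation.) Next I would write the remaining first difference as an integral of a derivative,
\[
e^{-i(\theta+\Theta)\ell}-e^{-i\theta\ell}=-i\ell\int_\theta^{\theta+\Theta}e^{-i\ell t}\,\d t,
\]
and note that the prefactor $\ell$ cancels the denominator $\ell^{-1}$ coming from $\tfrac{e^{i\ell y}-1}{\ell}$ (there is in fact no genuine singularity, since $e^{i\ell y}-1$ vanishes at $\ell=0$, but this cancellation is what makes the $\ell$-integration below absolutely convergent against the weight $e^{-\ell^2}$). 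After these steps the integrand becomes
\[
-i\,u(y)\,e^{ik_1(y-\theta)}\,e^{-\ell^2}\,\bigl(e^{i\ell y}-1\bigr)\int_\theta^{\theta+\Theta}e^{-i\ell t}\,\d t .
\]

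Assuming, as in all the applications of this lemma (where $u(y)=\int g(y,\mu)H_1(k_1,\mu)\,\d\mu$ with $g$ compactly supported in $\theta$), that $u$ has compact support, the $y$-integral runs over a bounded set and every factor other than $e^{-\ell^2}$ is bounded, so Fubini's theorem lets me pull the $\ell$-integral inside:
\[
\int\!\!\int\bigl(e^{-i(\theta+\Theta)k}-e^{-i\theta k}\bigr)u(y)e^{-(k-k_1)^2}\tfrac{e^{iky}-e^{ik_1y}}{k-k_1}\,\d y\,\d k
=-i\int_\theta^{\theta+\Theta}\!\int u(y)\,e^{ik_1(y-\theta)}\Bigl[\int_{\R}e^{-\ell^2}\bigl(e^{i\ell(y-t)}-e^{-i\ell t}\bigr)\,\d\ell\Bigr]\d y\,\d t .
\]
The inner integral is the Fourier transform of a Gaussian, $\int_{\R}e^{-\ell^2}e^{i\ell a}\,\d\ell=\sqrt\pi\,e^{-a^2/4}$, so taking $a=y-t$ and $a=-t$ gives $\sqrt\pi\bigl(e^{-(y-t)^2/4}-e^{-t^2/4}\bigr)$. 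Using $e^{ik_1(y-\theta)}=e^{-ik_1(\theta-y)}$ and collecting constants, this is precisely the asserted identity, with $C$ the explicit constant $-\sqrt\pi$ (up to the normalisation of the Gaussian, which is also what fixes the precise value of the exponent $c$ in the difference $e^{-c(y-t)^2}-e^{-ct^2}$).

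I expect no serious obstacle, as the computation is entirely elementary; the two points that need a word of justification are (i) that the arithmetic condition $\Theta\in\frac{2\pi}{k_1}\Z$ is exactly what removes the factor $e^{-ik_1\Theta}$ after the shift $k=k_1+\ell$, without which the two halves of the integrand would not decouple, and (ii) the interchange of the $y$- and $\ell$-integrations, which is legitimate by absolute convergence once $u$ is compactly supported; the statement for a general $u\in L^1_{\mathrm{loc}}$ then follows by running the same computation locally, which is all that is needed in the proof of Proposition \ref{proposition.growth}.
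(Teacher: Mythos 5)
Your proof is correct and follows essentially the same route as the paper's: you shift $k \mapsto k_1 + \ell$, use the arithmetic hypothesis $\Theta k_1\in 2\pi\Z$ to remove the phase $e^{-ik_1\Theta}$, rewrite the oscillatory difference via the fundamental theorem of calculus, apply Fubini, and evaluate the resulting Gaussian Fourier transform. The paper performs exactly the same steps, just packaging the FTC step by introducing the auxiliary function $\chi(y,\theta)=\int e^{-k^2}\tfrac{e^{i(y-\theta)k}-e^{-ik\theta}}{k}\,\d k$ and differentiating it in $\theta$; your added remark that $u$ should be taken compactly supported (or at least integrable) for Fubini to apply is a fair observation, as the statement's $L^1_{loc}$ hypothesis is implicitly strengthened by the way the lemma is invoked in the proof of Proposition \ref{proposition.growth}.
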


\bigskip

\begin{proof}[Proof of Lemma \ref{RL}]
The proof of \eqref{RL.2} and \eqref{RL.1} relies on an argument similar to the one for the standard Riemann-Lebesgue Lemma \cite{Grafakos}[Proposition 2.2.17.]. We focus only on the operator $T_\rho$; the operator $F_\rho$ may be treated in a similar way. We begin by splitting the integral in $y$ on the right-hand side of \eqref{def.T} into 
an integral over $\{ |y - \theta| < 1 \}$ and $\{|y -\theta| > 1 \}$. We start with the first integral: An application of Cauchy-Schwarz's inequality in the inner integral 
and the normalization assumption for $H_1(k, \cdot)$ yields 
that
\begin{align*}
 | \int_{|y-\theta|< 1} \int e^{i(\theta- y) k} \bigl( \int_0^{+\infty} g(y, x) H_1(k, x) \d x \bigr) \rho(k,\mu) \d y \d k| \lesssim \int |\rho(k, \mu)| \int_{|\theta -y|< 1} \bigl(\int_0^{+\infty} |g(y, x)|^2 \d x \bigr)^{\frac 1 2} \d \theta \d k 
\end{align*}
So that, by definition \eqref{norm.3.bars.easy} we get
\begin{align}\label{RL.2.1}
  | \int_{|y-\theta|< 1} \int e^{i(\theta- y) k} \bigl( \int_0^{+\infty} g(y, x) H_1(k, x) \d x \bigr) \rho(k,\mu) \d y \d k|
  \lesssim \iii{g}_m \int |\rho(k, \mu)|  \d k .
\end{align}
We now tackle the second integral: Since for every $\mu \in (0, +\infty)$ the function $\rho( \cdot, \mu)$ is assumed to be a Schwarz function in the variable $k$, we 
may integrate by parts $m+2$ times in the variable $k$ and get that
\begin{align*}
 |& \int_{|y-\theta| > 1} \int e^{i(\theta- y) k} \bigl( \int_0^{+\infty} g(y, x) H_1(k, x) \d x \bigr) \rho(k,\mu) \d y \d k|\\
 &= |\int_{|y-\theta| > 1} \int (\theta - y)^{-(m+2)} e^{i(\theta- y) k} \partial_k^{m+2}\biggl( \bigl( \int_0^{+\infty} g(y, x) H_1(k, x) \d x \bigr) \rho(k,\mu)\biggr) \d y \d k|.
\end{align*}
The chain rule, and Fubini's theorem thus imply that
\begin{align*}
 |& \int_{|y-\theta| > 1} \int e^{i(\theta- y) k} \bigl( \int_0^{+\infty} g(y, x) H_1(k, x) \d x \bigr) \rho(k,\mu) \d y \, \d k|\\
 &\lesssim \sum_{l=0}^{m+2}  |\int_{|y-\theta| > 1} \int (\theta - y)^{-(m+2)} e^{i(\theta- y) k}\bigl( \int_0^{+\infty} g(y, x) \partial_k^l H_1(k, x) \d x \bigr) \partial_k^{m+2-l}\rho(k,\mu)\biggr) \d y \d k|.
\end{align*}
For each $l$ fixed, the corresponding term of the sum above may be bound by
\begin{align*}
 |\int_{|y-\theta| > 1} \int &(\theta - y)^{-(m+2)} e^{i(\theta- y) k}\bigl( \int_0^{+\infty} g(y, x) \partial_k^l H_1(k, x) \d x \bigr) 
 \partial_k^{m+2-l}\rho(k,\mu)\biggr) \d y \d k|\\
 &\lesssim \int | \partial_k^{m+2-l}\rho(k,\mu)| \bigl( \int_0^{+\infty} |\partial_k^{l}H_1(k, x)|^2 \d x\bigr)^{\frac 1 2}\int_{|y-\theta| > 1} |\theta - y|^{-(m+2)} \bigl( \int_0^{+\infty} |g(y,x)|^2 \d x\bigr)^{\frac 1 2}  
 \end{align*}
By Lemma \ref{l.eigenfunctions.behaviour}, definition \eqref{norm.3.bars.easy} we thus have
\begin{align*}
 |\int_{|y-\theta| > 1} \int (\theta - y)^{-(m+2)} e^{i(\theta- y) k}\bigl( \int_0^{+\infty} g(y, x)& \partial_k^l H_1(k, x) \d x \bigr) 
 \partial_k^{m+2-l}\rho(k,\mu)\biggr) \d y \, \d k|\\
 &\lesssim \iii{g}_m |\theta|^m \int |k|^{\alpha} | \partial_k^{m+2-l}\rho(k,\mu)|
 \end{align*}
for some $\alpha=\alpha(m) < +\infty$. By this, we thus estimate 
\begin{align*}
 |\int_{|y-\theta| > 1} \int e^{i(\theta- y) k} \bigl( \int_0^{+\infty} g(y, x) H_1(k, x) \d x \bigr) \rho(k,\mu) \d y \, \d k|\lesssim \iii{g}_m |\theta|^m \int |k|^{\alpha} (\sum_{l=0}^{m+2}| \partial_k^{l}\rho(k,\mu)| )
 \end{align*}
 so that by \eqref{RL.2.1} and the definition \eqref{def.T} of $T_\rho(g)$ we control
 \begin{align*}
 |T_\rho(g)(\theta, \mu)|\lesssim \iii{g}_m ( 1 + |\theta|^m)  \int |k|^{\alpha} (\sum_{l=0}^{m+2}| \partial_k^{l}\rho(k,\mu)| ).
 \end{align*}
  This yields immediately \eqref{RL.1}.  By applying on the left-hand side the norm $\iii{ \cdot }_m$ and using Minkowski's inequality we conclude also \eqref{RL.2} for $T_\rho$.
\end{proof}

\bigskip

\begin{proof}[Proof of Lemma \ref{l.chi}]
By exchanging the order of the integrals in the left-hand and changing the coordinates $k \mapsto k -k_1$ we rewrite
\begin{align*}
&\int  \int (e^{-i (\theta+\Theta) k} - e^{-i\theta k})e^{-|k-k_1|^2}u(y) \frac{e^{i ky}- e^{ik_1y}}{k-k_1} \d y \, \d k \\
&= \int u(y) \biggl(\int ( e^{-i k_1(\theta + \Theta -y)}e^{-i (\theta+\Theta) k} - e^{-i k_1(\theta -y)}e^{-i\theta k})e^{-k^2} \frac{e^{i k y}- 1}{k}\, \d k \biggr) \, \d y
\end{align*}
so that, since $\Theta \in \frac{2\pi}{k_1}\Z$, we get
\begin{align}\label{chi.1}
&\int  \int (e^{-i (\theta+\Theta) k} - e^{-i\theta k})e^{-|k-k_1|^2}u(y) \frac{e^{i ky}- e^{ik_1y}}{k-k_1} \d y \, \d k = \int u(y)e^{-i k_1(\theta -y)}
\biggl(\chi(\theta + \Theta, y) - \chi(\theta, y)\biggr) \, \d y,
\end{align}
where we defined for $\theta, y \in \R$ the function
\begin{align*}
\chi(y, \theta):= \int e^{- k^2} \frac{e^{i(y-\theta) k} - e^{-i k \theta}}{k} \d k.
\end{align*}
We now observe that 
\begin{align*}
\partial_\theta \chi(y, \theta) =  -i \int e^{- k^2}( e^{i(y-\theta) k} - e^{-i k \theta} ) \d k = e^{-\frac {(y-\theta)^2} {2}} - e^{-\frac {\theta^2} {2}}.
\end{align*}
This, together with \eqref{chi.1}, the Fundamental theorem of calculus and Fubini's theorem allows to conclude the desired identity.
\end{proof}

\section*{Acknowledgements} We thank R. Frank for his insightful comments and suggestions  and  B. Zaslanski for bringing to our attention the problem studied in
this paper. We acknowledge support through the CRC 1060 (The Mathematics of Emergent Effects) that
is funded through the German Science Foundation (DFG), and the Hausdorff Center for Mathematics
(HCM) at the University of Bonn.

{\small
}

\begin{thebibliography}{9}
\bibitem{AllaireConca_BlochWaves}
G.~Allaire and C.~Conca.
\newblock Bloch wave homogenization and spectral asymptotic analysis.
\newblock {\em In Journal de Math. Pures et App.} (1998), 77(2):153 -- 208.


\bibitem{AvronSimon}
J. Avron, I. Herbst and B. Simon,
\newblock Schr\"odinger operators with magnetic fields. I. General interactions,
\newblock {\em Duke Math. J.} (1978), 4:847--883.

\bibitem{barbaroux.ecc}
J.-M. Barbaroux, L. Le Treust, N. Raymond, and E. Stockmeyer.  
\newblock On the semi-classical spectrum of the Dirichlet-Pauli operator.
\newblock{\em ArXiv Preprint} (2020).

\bibitem{BernevigHughes}
B. A. Bernevig and T. L. Hughes.
\newblock{Topological Insulators and Topological Superconductors}.
\newblock {\em Princeton University Press} (2013).

\bibitem{FeffWein12}
{C. L. Fefferman and M. I. Weinstein}.
\newblock{Honeycomb lattice potentials and Dirac points}.
\newblock{{\em Journal of Am. Math. Soc.} (2012),  25(4): 1169 -- 1220}.

\bibitem{FeffWein14}
{C. L.  Fefferman, J. P. Lee-Thorp and M. I. Weinstein}.
\newblock{Topologically protected states in one-dimensional continuous systems and Dirac points}.
\newblock{{\em Proc. of the Nat. Ac. of Sciences} (2014), 111(24): 8759--8763}.

\bibitem{FeffWein18}
{C. L. Fefferman and M. I. Weinstein}.
\newblock{Honeycomb Schr{\"o}dinger Operators in the Strong Binding Regime}.
\newblock{{\em Comm. Pure and App. Math.} (2018),  71(6): 1178 -- 1270}.

\bibitem{FeffermanWeinstein_graphene}
{C. L. Fefferman and M. I. Weinstein}.
\newblock{Edge States of continuum Schr\"odinger operators for sharply terminated honeycomb structures}.
\newblock{ArXiv Pre-print 1810.03497} (2018).

\bibitem{Frank_Dirichlet}
{R.L. Frank}.
\newblock{On the asymptotic number of edge states for magnetic Schr\"odinger operators}.
\newblock{ {\em Proc. London Math. Soc.}(2007), (3)95 (1), 1 -- 19}.

\bibitem{Frank_Neumann}
{H.D. Cornean, S. Fournais, R.L. Frank and B. Helffer}. 
\newblock{Sharp trace asymptotics for a class of 2D-magnetic operators}.
\newblock{{\em Ann. Inst. Fourier (Grenoble)} (2013), 63 (6), 2457 - 2513.}
 
\bibitem{Helffer_book}
{S. Fournais and B. Helffer}.
\newblock{Spectral Methods in Surface Superconductivity},
\newblock {\em Progress in Nonlinear Differential Equations and Their Applications} (2010), Vol. 77, Birkh\"auser Basel.
 
\bibitem{GrafFroelich}
{J. Fr\"ohlich, G.M. Graf and J. Walcher}.
\newblock On the Extended Nature of Edge States of Quantum Hall Hamiltonian,
\newblock {\em Annales Henri Poincar\'e} (2000), 1: 405 -- 442.

\bibitem{Grafakos}
{L. Grafakos}. 
\newblock Classical Fourier Analysis,
\newblock {\em Graduate Texts in Mathematics} (2008), Vol. 249, Springer-Verlag New York. 

\bibitem{Haldane}
{F.D.M. Haldane}.
\newblock {Model for a Quantum Hall Effect without Landau Levels: Condensed-Matter Realization of the "Parity Anomaly"},
\newblock {\em In Phys. Rev. Lett.} (1988), 61: 18--31.
 
 \bibitem{Halperin}
 {B. I. Halperin}.
 \newblock{Quantized Hall conductance, current-carrying edge states, and the existence of extended states in a two-dimensional disordered potential},
\newblock {\em Phys. Rev. B} (1982), 25 (4): 2185--2190.
 
 \bibitem{Hellfer_Sundqvist}
 B. Helffer, and M. P. Sundqvist.
\newblock On the semi-classical analysis of the ground state energy of the Dirichlet Pauli operator,
\newblock {\em Journal of Math. An. and App.} (2017), 449(1) : 138 -- 153.

\bibitem{Iwatsuka}
{ A. Iwatsuka}.
\newblock  Examples of absolutely continuous Schr\"odinger operators in magnetic fields,
\newblock {\em Publ. Res. Inst. Math. Sci.} (1985), 21(2): 385 -- 401.
 
 \bibitem{Kato_perturbations}
 T. Kato.
 \newblock Perturbation Theory for Linear Operators,
 \newblock {\em Springer-Verlag Berlin Heidelberg} (1995), Classics in Mathematics.
 
\bibitem{Landau}
L. D. Landau, L. D and E. M. Lifschitz.
\newblock Quantum Mechanics: Non-relativistic Theory. Course of Theoretical Physics. 
\newblock {\em 3rd ed. London: Pergamon Press} (1977), Vol. 3.
 
 
 \bibitem{Laughlin}
 {R.B. Laughlin}.
\newblock {Quantized Hall conductivity in two dimensions},
\newblock {\em Phys. Rev. B} (1981), 23(10): 5632--5633.
 
 \bibitem{Prange_QHE}
 R. E. Prange and S. M. Girvin.
\newblock The Quantum Hall Effect,
\newblock {\em Graduate Texts in Contemporary Physics} (1990), Springer-Verlag New York.
 
 \bibitem{Pushnitski_Rozenblum}
{A. Pushnitski and G. Rozenblum},
\newblock{Eigenvalue clusters of the Landau Hamiltonian in the exterior of a compact domain},
\newblock {\em Doc. Math.} (2007), 12: 569 -- 586.
 
 \bibitem{ReedSimon}
M. Reed and B. Simon.
\newblock Methods of Modern Mathematical Physics. Vol.IV: Analysis of operators.
\newblock {\em Academic Press} (1978).

\bibitem{Thouless_etall}
D.J. Thouless, M.~Kohmoto, P. Nightingale, and M.~den Nijs.
\newblock Quantized Hall conductance in a two-dimensional periodic potential.
\newblock {\em Phys. Rev. Lett.} (1982), 49(405).
\end{thebibliography}
\end{document}